\theoremstyle{definition}
\newtheorem{definition}{Definition}[section]
\newtheorem*{definition*}{Definition}
\theoremstyle{plain}
\newtheorem{proposition}[definition]{Proposition}
\newtheorem{lemma}[definition]{Lemma}
\newtheorem{theorem}[definition]{Theorem}
\newtheorem*{theorem*}{Theorem}
\newtheorem{corollary}[definition]{Corollary}
\theoremstyle{remark}
\newtheorem{remark}[definition]{Remark}
\numberwithin{equation}{section}
\renewcommand\epsilon{\varepsilon}
\renewcommand\phi{\varphi}
\newcommand\RR{\mathbb{R}}
\newcommand\CC{\mathbb{C}}
\newcommand\ZZ{\mathbb{Z}}
\newcommand\QQ{\mathbb{Q}}
\newcommand\NN{\mathbb{N}}
\newcommand\kk{\mathbf{k}}
\newcommand\Alg{\mathbf{Alg}}
\newcommand\Art{\mathbf{Art}}
\newcommand\ProArt{\mathbf{ProArt}}
\newcommand\Set{\mathbf{Set}}
\newcommand\Grp{\mathbf{Grp}}
\newcommand\Ohat{\widehat{\mathcal{O}}}
\DeclareMathOperator\id{id}
\DeclareMathOperator\Ker{Ker}
\DeclareMathOperator*\End{End}
\DeclareMathOperator*\Hom{Hom}
\DeclareMathOperator\Ad{Ad}
\DeclareMathOperator\Gr{Gr}
\DeclareMathOperator\Def{Def}
\DeclareMathOperator\MC{MC}
\DeclareMathOperator\Cone{Cone}
\DeclareMathOperator\Sym{Sym}
\DeclareMathOperator\Free{\mathscr{F}}
\DeclareMathOperator\FreeGCoAlg{\Free^{\mathnormal{c}}}
\DeclareMathOperator\MHD{MHD}
\newcommand\TW{\mathrm{TW}}
\title[Mixed Hodge structures and representations]{Mixed Hodge structures and representations of
fundamental groups of algebraic varieties}
\author{Louis-Clément Lefèvre}
\email{louisclement.lefevre@univ-grenoble-alpes.fr}
\address{Univ.\ Grenoble Alpes, CNRS, Institut Fourier, F-38000 Grenoble, France}
\begin{document}

\begin{abstract}
Given a complex variety $X$, a linear algebraic group $G$ and a representation
$\rho$ of the fundamental group $\pi_1(X,x)$ into $G$, we develop a framework for
constructing a functorial mixed Hodge structure on the formal local ring of the
representation variety of $\pi_1(X,x)$ into $G$ at $\rho$ using mixed Hodge
diagrams and methods of $L_\infty$ algebras. We apply it in two geometric
situations: either when $X$ is compact Kähler and $\rho$ is the monodromy of a
variation of Hodge structure, or when $X$ is smooth quasi-projective and $\rho$
has finite image.
\end{abstract}

\maketitle

\tableofcontents
\section{Introduction}

Classical Hodge theory provides the cohomology groups of compact Kähler
manifolds with additional structure. This structure has been vastly used as a
tool to study and restrict the possible topology of these manifolds. It was
generalized by Deligne \cite{DeligneII} into the notion of mixed Hodge structure for complex
algebraic varieties. Since then, mixed Hodge structures have been constructed on
many other topological invariants. In particular Morgan \cite{Morgan} and Hain
\cite{Hain} constructed mixed Hodge structures on the rational homotopy groups
of such varieties.

In this vein, in this article we construct mixed Hodge structures on invariants
associated to linear representations of the fundamental group. Let $X$ be either
a compact Kähler manifold or a smooth complex quasi-projective algebraic
variety, with a base point $x$. In both cases the fundamental group $\pi_1(X,x)$
is finitely presentable. Let $G$ be a linear algebraic group over some field
$\kk\subset\RR$ or $\CC$. The set of group morphisms from $\pi_1(X,x)$ into
$G(\kk$) is the set of points of an affine scheme of finite type over $\kk$ that
we denote by $\Hom(\pi_1(X,x),G)$ and call the \emph{representation variety}.
Given a representation $\rho$ of $\pi_1(X,x)$, seen as a $\kk$-point of the
representation variety, we denote by $\Ohat_\rho$ the formal local ring at
$\rho$. This ring pro-represents the functor of deformations of $\rho$ and its
structure tells about the topology of $X$.

The study of $\Ohat_\rho$ was done first by Goldman and Millson
\cite{GoldmanMillson} when $X$ is a compact Kähler manifold: they show that it
has a quadratic presentation. This was later reviewed by Eyssidieux-Simpson
\cite{EyssidieuxSimpson} who constructed a functorial mixed Hodge structure on
$\Ohat_\rho$ when $\rho$ is the monodromy of a variation of Hodge structure.
In case where $X$ is non-compact and $\rho$ has finite image, Kapovich and
Millson \cite{KapovichMillson} constructed only quasi-homogeneous gradings on
$\Ohat_\rho$. They apply it to describe explicitly a class of finitely
presented groups that cannot be fundamental groups of smooth algebraic
varieties. We improve their construction to a mixed Hodge structure:

\begin{theorem*}
There is a mixed Hodge structure on $\Ohat_\rho$ in the following situations:
\begin{enumerate}
\item Either $X$ is compact Kähler and $\rho$ is the monodromy of a polarized
variation of Hodge structure.
\item Either $X$ is smooth quasi-projective and $\rho$ has finite image.
\end{enumerate}
This mixed Hodge structure is defined over the field $\kk$ is $\rho$ is (this
includes the case $\kk=\CC$, where one works with complex mixed Hodge
structures). It is functorial in $X,x,\rho$. The weight zero part is the formal
local ring defining the orbit by conjugation of $\rho$ inside
$\Hom(\pi_1(X,x),G)$.
\end{theorem*}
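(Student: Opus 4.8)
The plan is to realize $\Ohat_\rho$ as the pro-representing object of the deformation functor attached to an $L_\infty$ algebra that carries a mixed Hodge structure, and then to transport that structure along the Chevalley--Eilenberg/bar construction. First I would identify the controlling algebra. By the theory going back to Goldman--Millson, the formal germ of $\Hom(\pi_1(X,x),G)$ at $\rho$ is governed by the differential graded Lie algebra $L=A^\bullet(X,\Ad\rho)$ of differential forms valued in the adjoint local system $\Ad\rho=\mathfrak{g}\otimes_\rho\widetilde{X}$, with bracket induced by the wedge product and the bracket of $\mathfrak{g}$. Concretely $\Ohat_\rho$ is the continuous dual of the Chevalley--Eilenberg chains of $L$ (the Koszul/bar dual), and the conjugation action of $G$ corresponds to the gauge action of $L^0$. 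The entire proof reduces to equipping this picture with weight and Hodge filtrations that are compatible with every structure map.

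Next I would produce the mixed Hodge structure on $L$ in the form of a mixed Hodge diagram $\MHD$. In case (1), since $\rho$ is the monodromy of a polarized variation of Hodge structure, the bundle $\Ad\rho\subset\End(\mathbb{V})$ underlies a variation of Hodge structure of weight zero, and the twisted de Rham complex $A^\bullet(X,\Ad\rho)$ assembles into a mixed Hodge diagram following Eyssidieux--Simpson. In case (2), since $\rho$ has finite image, I would pass to a finite Galois cover $p\colon X'\to X$ on which $p^*\Ad\rho$ becomes the constant sheaf $\mathfrak{g}$; the logarithmic de Rham complex on a good compactification of $X'$ then furnishes a mixed Hodge diagram, and I would recover the structure on $X$ by taking invariants under the Galois group, which acts by automorphisms of mixed Hodge diagrams. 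In both situations the Lie bracket is a morphism of mixed Hodge diagrams after the appropriate Tate twist, so that $L$ is a Lie algebra object in $\MHD$, defined over $\kk$ whenever $\rho$ is.

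Then I would transfer the structure to cohomology and descend to the ring. Applying the Thom--Whitney functor $\RTW$ to the diagram yields a single mixed Hodge complex carrying a homotopy Lie (that is, $\Linf$) structure; homotopy transfer performed inside the abelian category $\MHC$ of mixed Hodge complexes produces a minimal $\Linf$ model on $H^\bullet(X,\Ad\rho)$ whose higher brackets are morphisms of mixed Hodge structures. Because the Chevalley--Eilenberg/bar construction is a functor on $\Linf$ algebras in $\MHC$, dualizing it endows $\Ohat_\rho$ with a mixed Hodge structure; multiplicativity, the $\kk$-structure, and functoriality in $(X,x,\rho)$ all follow since each step is functorial and strict for the filtrations. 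Finally, to identify the weight-zero part I would examine $W_0$ through the minimal model: as $\Ad\rho$ has weight zero, the degree-zero piece $H^0(X,\Ad\rho)=\mathfrak{g}^\rho$ is pure of weight zero, the gauge directions tangent to the orbit $G\cdot\rho$ sit in weight zero, and tracing these through the bar construction identifies the weight-zero part of $\Ohat_\rho$ with the formal local ring of the orbit.

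The main obstacle I expect is the compatibility of the homotopy transfer and the bar construction with the filtrations. One must ensure that the transferred $\Linf$ structure can be chosen strictly within $\MHC$, so that all higher products and the induced (co)algebra structure are genuine morphisms of mixed Hodge structures; this requires a careful choice of splitting---typically the Deligne splitting of the weight filtration---together with a verification that gauge equivalence does not destroy strictness, and that the resulting filtration on $\Ohat_\rho$ is an honest mixed Hodge structure (weight filtration by sub-mixed-Hodge-structures, strictness of the differentials). Case (2) carries the additional difficulty of propagating the Galois descent through the full $\Linf$ structure rather than merely on cohomology.
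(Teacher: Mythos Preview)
Your outline captures the broad architecture (controlling DG Lie algebra, mixed Hodge diagram via Deligne--Zucker or via a finite cover and invariants, bar construction), but there is one genuine gap and one significant divergence from the paper's method.

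\textbf{The gap: the base point and the augmentation.} You assert that $\Ohat_\rho$ is the dual of the Chevalley--Eilenberg chains of $L=A^\bullet(X,\Ad\rho)$. This is not correct: by Goldman--Millson, $\Def_L$ is isomorphic to $\Def_\rho/G^0$, the formal deformations of $\rho$ \emph{modulo conjugation}, not to $\Def_\rho$ itself. Moreover $H^0(L)=\mathfrak{g}^\rho$ is typically nonzero, so $\Def_L$ is not even pro-representable in the naive sense. The paper's central device, which your proposal omits, is to pass to the augmented picture $\epsilon_x:L\to\mathfrak{g}$ (evaluation at $x$) and to replace $L$ by the desuspended mapping cone $C=\Cone(\epsilon_x)[-1]$. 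This $C$ carries the Fiorenza--Manetti $L_\infty$ structure, has $H^0(C)=0$, and its deformation functor is exactly $\Def_\rho$; hence $\kk\oplus H^0(\mathscr{C}(C))^*$ is canonically $\Ohat_\rho$. The delicate point, which is the content of the paper's Theorem~\ref{theorem:MHD-L-infinity}, is that the higher $L_\infty$ operations on $C$ are compatible with the weight filtration only under the hypotheses that $W$ on $L$ has non-negative weights and $\mathfrak{g}$ is pure of weight zero; this is where the specific shape of the Fiorenza--Manetti brackets (iterated brackets in $\mathfrak{g}$) is used. Without the cone you cannot reach $\Ohat_\rho$, and with the cone you must verify this filtered compatibility.

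\textbf{The divergence: homotopy transfer versus direct bar construction.} You propose to transfer the $L_\infty$ structure to cohomology inside $\MHC$ via a Deligne splitting, and you rightly flag the strictness of this transfer as the main obstacle. The paper sidesteps this entirely: it never performs homotopy transfer for the purpose of constructing the mixed Hodge structure. Instead it proves directly (Theorem~\ref{theorem:mixed-Hodge-diagram-bar}) that if $C$ is a mixed Hodge diagram of $L_\infty$ algebras with $H^0=0$, then each $\mathscr{C}_s(C)$ is a mixed Hodge diagram of coalgebras, by computing the spectral sequence for the bar-weight filtration and identifying it with a direct sum of $E_1$ pages of the mixed Hodge complexes $C^{\wedge r}$. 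This avoids any choice of splitting and is manifestly functorial. Your transfer approach might be made to work, but it is strictly harder and is not what the paper does; in particular the Deligne splitting is only $\CC$-linear and need not respect the $\kk$-structure, so producing a $\kk$-mixed Hodge structure this way is not automatic.

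Finally, for the weight-zero identification the paper does not ``trace gauge directions through the bar construction''; it compares $C$ with the cone $D$ of $\epsilon_x:H^0(L)\to\mathfrak{g}$, computes $H^0(\mathscr{C}(D))^*$ explicitly as the formal power series ring on $\mathfrak{g}/\epsilon_x(H^0(L))$, and uses strictness of morphisms of mixed Hodge structures for the projection $H^0(\mathscr{C}(C))^*\to H^0(\mathscr{C}(D))^*$.
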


By the way we present a framework which unifies the previous constructions.
Namely in their work, Eyssidieux-Simpson make strong use of the special
properties of analysis on compact Kähler manifolds and nothing can be directly
generalized if $X$ is not compact. On the other hand, Kapovich-Millson make
strong use of the theory of minimal models of Morgan and they only get a grading
and not a mixed Hodge structure on $\Ohat_\rho$; furthermore this construction
is not functorial.  We propose here a different approach in which in all cases
the mixed Hodge structure comes directly from the $H^0$ of an appropriate mixed
Hodge complex whose construction depends on the geometric situation, but the
functoriality and the dependence on the base point is very explicit. Let us
explain.

The theory of Goldman-Millson was one of the starting point of the use of
differential graded (DG) Lie algebras in deformation theory. Let $L$ be the DG
Lie algebra of differential forms over $X$ with values in the adjoint bundle of
the flat principal $G$-bundle induced by the monodromy of $\rho$. To $L$ one can
associate a deformation functor $\Def_L$ which is related to the functor of
formal deformations of $\rho$. We review this in
sections~\ref{section:deformation-DG-Lie} and~\ref{section:Goldman-Millson}. The
classical theorems known to Goldman and Millson are that $\Def_L$ is invariant
under quasi-isomorphisms on $L$ and is pro-representable if $H^0(L)=0$.  In our
compact case, $L$ has the special property of being formal (i.e.\
quasi-isomorphic to its cohomology) and the deformation functor simplifies
much.

Since then the theory of formal deformations via DG Lie algebras has received
many contributions by many people, including Hinich, Kontsevich, Manetti,
Pridham, Lurie. See the survey \cite{Toen}. It is then very interesting to
review the classical theory used by Goldman-Millson and Kapovich-Millson in the
light of the much more modern tools developed: derived deformation theory and
$L_\infty$ algebras. In particular this theory furnishes a canonical algebra
pro-representing the deformation functor of $L$ (in case $H^0(L)=0$) and
invariant under quasi-isomorphisms. This algebra is obtained by dualizing a
coalgebra which is the $H^0$ of a DG coalgebra $\mathscr{C}(L)$ known as the
\emph{bar construction} or \emph{Chevalley-Eilenberg complex}. We explain this
in section~\ref{section:L-inf}. In our non-compact case $L$ is not formal and
using this construction is the first part of our unifying framework.

The second part of the unification is that in all our geometric situations, $L$
has a structure of mixed Hodge complex (a notion introduced by Deligne in order
to construct mixed Hodge structures on the cohomology of a complex) compatible
with the multiplicative structure. We call this a \emph{mixed Hodge diagram of
Lie algebras}. In the compact case this is almost an immediate consequence of
the work of Deligne-Zucker \cite{Zucker} constructing Hodge structures on the
cohomology with coefficients in a variation of Hodge structure. In the
non-compact case the ideas are already contained in the work of Kapovich-Millson
but they use the method of Morgan and we re-write them with the more powerful
and more functorial method of Navarro Aznar \cite{Navarro}.
From the data of a mixed Hodge diagram of Lie algebras only, we would like to
get the mixed Hodge structure on $\Ohat_\rho$. Hence we delay the construction
of these diagrams to the sections~\ref{section:geo-compact} (compact case)
and~\ref{section:geo-non-compact} (non-compact case).

However in the theory of Goldman-Millson $L$ controls the deformation theory of
$\rho$ only up to conjugation. If we want to eliminate conjugations, we have to
work with $L$ together with its augmentation $\epsilon_x$ to $\mathfrak{g}$ the
Lie algebra of $G$ given by evaluating degree zero forms at $x$. To this data is
associated by Eyssidieux-Simpson an \emph{augmented deformation functor}
$\Def_{L,\epsilon_x}$ which is a slight variation of $\Def_L$ and is isomorphic
to the functor of formal deformation of $\rho$, pro-represented by
$\Ohat_\rho$. Thus any algebra pro-representing $\Def_{L,\epsilon_x}$ will be
canonically isomorphic to $\Ohat_\rho$. It also coincides with the deformation
functor associated to the DG Lie algebra $\Ker(\epsilon_x)$ but this may not be
anymore a mixed Hodge diagram. In the category of mixed Hodge complexes it is
better to work with the mapping cone, but the mapping cone of an augmented DG
Lie algebra may not be anymore a DG Lie algebra.  Despite our efforts, it is not
possible to find an object that would be at the same time DG Lie, mixed Hodge
complex, and with $H^0=0$.

We find a solution to this issue by working with $L_\infty$ algebras. Briefly,
these are DG Lie algebras \emph{up to homotopy} that have the same properties as
DG Lie algebras at the level of cohomology but are less rigid to construct (for
example, any DG vector space with a quasi-isomorphism to a DG Lie algebra
inherits a structure of $L_\infty$ algebra). They also have an associated
deformation functor that extends the classical one of DG Lie algebras. In an
article of Fiorenza-Manetti \cite{FiorenzaManetti} the mapping cone of a
morphism between DG Lie algebras is shown to carry a canonical $L_\infty$
algebra structure and the associated deformation functor is studied. We explain
this in section~\ref{subsection:Linfinifty-mapping-cone}.  The crucial claim for
us is that for augmented DG Lie algebras the deformation functor of the cone
coincides with the augmented deformation functor introduced independently by
Eyssidieux-Simpson.

The plan of our approach is now clear. Start with the algebraic data of a mixed
Hodge diagram of Lie algebras $L$ together with an augmentation $\epsilon$ to a Lie
algebra $\mathfrak{g}$ (we review the tools we need from Hodge theory in
sections~\ref{section:MHS} and~\ref{section:MHC}). We show first in
section~\ref{section:MHD-L-infinity} that on the mapping cone $C$ of $\epsilon$
the structures of mixed Hodge complex and of $L_\infty$ algebra are
compatible. We call the resulting object a \emph{mixed Hodge diagram of
$L_\infty$ algebras}. Then $C$ has $H^0=0$ so its deformation functor is
pro-representable, again by the algebra dual to $H^0(\mathscr{C})$ where the
functor $\mathscr{C}$ is naturally extended from DG Lie algebras to $L_\infty$
algebras. We then show in section~\ref{section:bar-MHD} that $\mathscr{C}(C)$ is
a \emph{mixed Hodge diagram of coalgebras}. This step is very similar to Hain's
\emph{bar construction on mixed Hodge complexes}. And then its $H^0$ and the dual
algebra, which is canonically isomorphic to $\Ohat_\rho$ by its
pro-representability property, have mixed Hodge structures. The functoriality of
this construction is clear and the dependence on the base point is explicit. In
section~\ref{section:MHS-desctiption} we extract some description of the mixed
Hodge structure we constructed by algebraic methods, especially we describe it
at the level of the cotangent space and we describe the weight zero part.

The reason why we are restricted to representations with finite image when $X$
is non-compact is that until now we are unable to construct a mixed Hodge
diagram in more general cases, for example when $\rho$ is the monodromy of a
variation of Hodge structure. We strongly believe it is possible to deal with
this case with our methods. The case of singular varieties may also be
accessible. Combined with a further study of group theory, this could lead to
new examples of finitely presentable groups not isomorphic to fundamental groups
of smooth algebraic varieties.

\subsection*{Acknowledgment} This is part of my Ph.D.\ thesis. I am very
grateful to my supervisor Philippe Eyssidieux and to the thesis
reviewers Nero Budur and Carlos Simpson. I also thank the
project ANR Hodgefun.

\section{Deformation functor of a DG Lie algebra}
\label{section:deformation-DG-Lie}

We start by recalling the classical framework for deformation theory: functors
on local Artin algebras and DG Lie algebras. We fix a field $\kk$ of
characteristic zero.

\begin{definition}
\label{definition:local-Artin-algebra}
A \emph{local Artin algebra} over $\kk$ is an algebra $A$ over $\kk$, local with
maximal ideal $\mathfrak{m}_A$, with residue field $A/\mathfrak{m}_A=\kk$, and
finite-dimensional over $\kk$.  These form a category $\Art_\kk$, where the
morphisms are morphisms of algebras over $\kk$ that are required to preserve the
maximal ideals.
\end{definition}

This definition implies that $\mathfrak{m}_A$ is a nilpotent ideal and so $A$ is
local complete. By \emph{deformation functors} we will mean certain functors
from $\Art_\kk$ to the category of sets $\Set$.

\begin{definition}
\label{definition:pro-Artin-algebra}
A \emph{pro-Artin algebra} over $\kk$ is a complete local algebra $R$ over $\kk$
such that all quotients $R/(\mathfrak{m}_R)^n$ are local Artin algebras (so $R$
is a projective limit of local Artin algebras).  These form a category
$\ProArt_\kk$ where morphisms have to preserve the maximal ideal.
\end{definition}

The most basic examples are the formal power series algebras
$\kk[[ X_1,\dots,X_r]]$. The category $\Art_\kk$ is a full subcategory of
$\ProArt_\kk$, which is itself a full subcategory of the category of complete
local algebras with residue field $\kk$. Namely in all these three categories,
morphisms are morphisms of algebras that preserve the maximal ideal.
Among all complete local algebras, those that we call pro-Artin are
characterized by the fact that their cotangent space
$\mathfrak{m}_R/(\mathfrak{m}_R)^2$ is finite-dimensional over $\kk$,
or equivalently they are characterized by the Noetherian property.

Any such algebra defines a deformation functor:

\begin{definition}
A functor $F:\Art_\kk \rightarrow \Set$ is said to be \emph{pro-representable}
if $F$ is isomorphic to
\begin{equation}
A\longmapsto \Hom_{\ProArt}(R, A)
\end{equation}
for some pro-Artin algebra $R$ over $\kk$.
\end{definition}

By the pro-Yoneda lemma (\cite[\S~2]{Schlessinger}, \cite[3.1]{GoldmanMillson}),
such an algebra $R$ is unique up to a unique isomorphism.

We then refer to the lecture notes \cite[\S~V]{ManettiLectures} for DG Lie algebras. Such a
DG Lie algebra $L$ has a grading $L=\bigoplus_{n\in\ZZ} L^n$, a differential
\begin{equation}
d:L^n \longrightarrow L^{n+1}
\end{equation}
and an anti-symmetric Lie bracket
\begin{equation}
[-,-]:L^i \otimes L^j \longrightarrow L^{i+j}
\end{equation}
such that $d$ is a derivation for the Lie bracket and the bracket satisfies the
graded Jacobi identity. Its set of \emph{Maurer-Cartan elements} is
\begin{equation}
\MC(L) := \left\{ x\in L^1 \ \left|\  d(x)+\frac{1}{2}[x,x]=0 \right. \right\} .
\end{equation}
If $A$ is a local Artin algebra with maximal ideal $\mathfrak{m}_A$ then
$L\otimes\mathfrak{m}_A$ is a \emph{nilpotent} DG Lie algebra with bracket
\begin{equation}
[u\otimes a, v\otimes b] := [u,v]\otimes ab,\quad u,v\in L,\ a,b\in\mathfrak{m}_A
\end{equation}
and differential
\begin{equation}
d(u\otimes a) := d(u)\otimes a .
\end{equation}
On $L^0 \otimes\mathfrak{m}_A$, which is a nilpotent Lie algebra, the
Baker-Campbell-Hausdorff formula defines a group structure that we denote by
$(\exp(L^0\otimes\mathfrak{m}_A), *)$, whose elements are denoted by $e^\alpha$ for
$\alpha\in L^0\otimes\mathfrak{m}_A$. This groups acts on
$\MC(L\otimes\mathfrak{m}_A)$ by \emph{gauge transformations}.

\begin{definition}
\label{definition:DG-Lie-deformation-functor}
The \emph{deformation functor} of a DG Lie algebra $L$ over $\kk$ is the functor
\begin{equation}
\begin{array}{rcl}
\Def_L : \Art_\kk & \longrightarrow & \Set \\
A & \longmapsto & \frac{\MC(L\otimes\mathfrak{m}_A)}{\exp(L^0
                  \otimes\mathfrak{m}_A)} .
\end{array}
\end{equation}
If a deformation functor $F$ is isomorphic to some $\Def_L$ we say that $L$
\emph{controls} the deformation problem given by $F$.
\end{definition}

The main theorem of deformation theory is:

\begin{theorem}[{\cite[V.52]{ManettiLectures}}]
\label{theorem:main-theorem-deformation-theory}
Let $\phi:L\stackrel{\approx}{\longrightarrow} L'$ be a quasi-isomorphism
between DG Lie algebras. Then the induced morphism
\begin{equation}
\Def_L \stackrel{\simeq}{\longrightarrow} \Def_{L'}
\end{equation}
is an isomorphism.
\end{theorem}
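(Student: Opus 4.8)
The plan is to prove this via obstruction theory together with the \textbf{standard smoothness criterion} for morphisms of deformation functors, which reduces the whole statement to the behavior of $\phi$ on cohomology. First I would verify that $\Def_L$ really satisfies the axioms of a deformation functor (Schlessinger's conditions, i.e.\ that it behaves well under fiber products along small extensions), so that the tangent--obstruction formalism applies. From the definition $\Def_L(A)=\MC(L\otimes\mathfrak{m}_A)/\exp(L^0\otimes\mathfrak{m}_A)$ one computes the tangent space $\Def_L(\kk[\epsilon]/(\epsilon^2))$ directly: a Maurer--Cartan element in $L^1\otimes(\epsilon)$ is just a $1$-cocycle, since the quadratic term $\tfrac12[x,x]$ lands in $\mathfrak{m}_A^2=0$, and the gauge action by $L^0\otimes(\epsilon)$ identifies two cocycles precisely when they differ by a coboundary. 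Hence the tangent space is $H^1(L)$.

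Next I would set up a complete obstruction theory valued in $H^2(L)$. Given a small extension $0\to(t)\to B\to A\to 0$ in $\Art_\kk$ and a class represented by $x\in\MC(L\otimes\mathfrak{m}_A)$, I lift $x$ arbitrarily to $\tilde{x}\in L^1\otimes\mathfrak{m}_B$; then $h:=d\tilde{x}+\tfrac12[\tilde{x},\tilde{x}]$ lies in $L^2\otimes(t)$, and using $(t)\cdot\mathfrak{m}_B=0$ together with the graded Jacobi identity one checks $dh=0$, so $h$ defines a class $\mathrm{ob}(x)\in H^2(L)\otimes(t)$. The points to establish are that this class is independent of all choices, that it vanishes exactly when the class of $x$ lifts to $\Def_L(B)$, and---crucially---that it is \emph{natural}: a morphism of DG Lie algebras carries $\mathrm{ob}$ to $\mathrm{ob}$ through the induced map on $H^2$. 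This naturality is precisely what couples the obstruction theories of $L$ and $L'$ via $H^2(\phi)$.

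With these preliminaries in place, the quasi-isomorphism hypothesis does all the remaining work. Since $\phi$ is a quasi-isomorphism, $H^1(\phi)$ and $H^2(\phi)$ are isomorphisms, so $\Def_\phi$ is bijective on tangent spaces and injective (in fact bijective) on obstruction spaces. The standard smoothness criterion then asserts that a morphism of deformation functors that is bijective on tangent spaces and injective on obstruction spaces is an isomorphism; applying it to $\Def_\phi$ concludes the proof.

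I expect the main obstacle to be the proof of the smoothness criterion itself, which is not formal. One argues by induction on the length of $A$, decomposing an arbitrary surjection in $\Art_\kk$ into a chain of small extensions and using the compatibility of the obstruction maps at each stage both to lift elements and to show the lift is unique up to the gauge action; the bookkeeping of gauge equivalences across the induction is the delicate part. A cleaner-looking but essentially equivalent alternative would be to factor $\phi$ through a mapping cylinder as an injective quasi-isomorphism followed by a surjective one with acyclic kernel, and to treat each case by explicit lifting of Maurer--Cartan elements and gauges; however, the obstruction-theoretic route isolates exactly the two cohomological inputs $H^1$ and $H^2$ and is the most transparent.
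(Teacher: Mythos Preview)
The paper does not supply a proof of this theorem; it is quoted from Manetti's lecture notes \cite[V.52]{ManettiLectures} and used as a black box throughout. Your outline via the tangent--obstruction formalism is correct and is essentially the argument given in that reference: once one has checked that $\Def_L$ is a deformation functor in Manetti's sense, with tangent space $H^1(L)$ and a complete natural obstruction theory in $H^2(L)$, the standard inverse-function-type criterion (a morphism of deformation functors that is bijective on tangent spaces and injective on obstruction spaces is an isomorphism) finishes the proof. The factorization alternative you mention --- writing $\phi$ as a surjection with acyclic kernel composed with an injection --- is the original Goldman--Millson/Schlessinger--Stasheff route and is equally valid; the two approaches are the two classical proofs of this result.
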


When reviewing the theory of Goldman-Millson, Eyssidieux and Simpson introduce a
small variation of the classical deformation functor for augmented DG Lie
algebras.

\begin{definition}[{\cite[\S~2.1.1]{EyssidieuxSimpson}}]
Let $\epsilon:L\rightarrow \mathfrak{g}$ be an augmentation of the DG Lie
algebra $L$ to the Lie algebra $\mathfrak{g}$ (that one can see as a DG Lie
algebra concentrated in degree zero). The
\emph{augmented deformation functor} is the functor
\begin{equation}
\Def_{L,\epsilon}:\Art_\kk \longrightarrow \Set
\end{equation}
defined on $A \in \Art_\kk$ as the quotient of the set
\begin{equation}
\label{equation:objects-deformation-functor-augmented}
\left\{ (x,
  e^{\alpha})\in (L^1\otimes\mathfrak{m}_A)\times \exp(\mathfrak{g}\otimes\mathfrak{m}_A) \ \left| \
    d(x)+\frac{1}{2}[x,x] = 0 \right. \right\}
\end{equation}
by the action of $\exp(L^0 \otimes \mathfrak{m}_A)$ given by
\begin{equation}
\label{equation:morphisms-deformation-functor-augmented}
e^{\lambda}.(x,e^{\alpha}) := \big(e^{\lambda}.x,\ e^\alpha * e^{-\epsilon(\lambda)}\big) .
\end{equation}
\end{definition}

The $*$ on the right-hand side
of~\eqref{equation:morphisms-deformation-functor-augmented} is the product given
by the Baker-Campbell-Hausdorff formula. For this group law
$e^{-\epsilon(\lambda)}$ is the inverse of $e^{\epsilon(\lambda)}$, thus this is
really a left action of the group $\exp(L^0\otimes\mathfrak{m}_A)$.

\begin{remark}
This is not exactly the same action as in \cite{EyssidieuxSimpson}, but we make
this choice so as to be compatible with the construction developed in
section~\ref{subsection:Linfinifty-mapping-cone}.
\end{remark}

The fundamental Theorem~\ref{theorem:main-theorem-deformation-theory} has a
small variation for the augmented deformation functor.

\begin{lemma}[{See \cite[2.7]{EyssidieuxSimpson}}]
\label{lemma:quasi-isomorphism-deformation-functor-augmented}
Let ${\phi:L\stackrel{\approx}{\longrightarrow} L'}$ be a quasi-isomorphism of
augmented DG Lie algebras commuting with the augmentations:
\begin{equation}
\xymatrix{ L \ar[rd]_{\epsilon} \ar[rr]_{\phi}^{\approx} & & L' \ar[ld]^{\epsilon'} \\ & \mathfrak{g}.
&  }
\end{equation}
Then $\phi$ induces an isomorphism of deformation functors
\begin{equation}
\Def_{L,\epsilon} \stackrel{\simeq}{\longrightarrow} \Def_{L', \epsilon'}.
\end{equation}
\end{lemma}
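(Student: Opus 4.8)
The plan is to first check that $\phi$ even induces a natural transformation of functors, and then to prove bijectivity by reducing to the non-augmented Theorem~\ref{theorem:main-theorem-deformation-theory}.

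First I would verify well-definedness of the map $\eta$ sending $(x,e^\alpha)$ to $(\phi(x),e^\alpha)$. Since $\phi$ is a morphism of DG Lie algebras it preserves the Maurer--Cartan equation, and since the gauge action on the first coordinate is built only from $d$ and $[-,-]$ one has $\phi(e^\lambda.x)=e^{\phi(\lambda)}.\phi(x)$. On the second coordinate the commutation $\epsilon'\circ\phi=\epsilon$ gives $e^{-\epsilon'(\phi(\lambda))}=e^{-\epsilon(\lambda)}$, so $\eta$ intertwines the two actions~\eqref{equation:morphisms-deformation-functor-augmented} and descends to a natural transformation $\Def_{L,\epsilon}\to\Def_{L',\epsilon'}$.

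For bijectivity I would treat first the case where $\epsilon$ is surjective, which is the only one arising in the geometric applications, where $\epsilon$ is evaluation at the base point (note $\epsilon'$ is then surjective too, as $\Image\epsilon\subseteq\Image\epsilon'$). In that case every orbit has a representative with $e^\alpha=1$: given $(x,e^\alpha)$, choose $\lambda\in L^0\otimes\mathfrak{m}_A$ with $\epsilon(\lambda)=-\alpha$ and act. Two such normalized representatives $(x_1,1),(x_2,1)$ lie in one orbit exactly when $x_1,x_2$ are gauge equivalent under the subgroup $\exp(\Ker(\epsilon^0)\otimes\mathfrak{m}_A)$, which identifies $\Def_{L,\epsilon}$ naturally with the ordinary deformation functor $\Def_{\Ker\epsilon}$ of the sub DG Lie algebra $\Ker\epsilon$ (equal to $L$ in nonzero degrees and to $\Ker(\epsilon^0)$ in degree zero). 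Now $(\phi,\id_{\mathfrak{g}})$ is a map of short exact sequences $0\to\Ker\epsilon\to L\stackrel{\epsilon}{\to}\mathfrak{g}\to0$ and its primed analogue, so the five lemma on the long exact cohomology sequences shows $\phi$ restricts to a quasi-isomorphism $\Ker\epsilon\stackrel{\approx}{\longrightarrow}\Ker\epsilon'$. Theorem~\ref{theorem:main-theorem-deformation-theory} gives $\Def_{\Ker\epsilon}\cong\Def_{\Ker\epsilon'}$, and under the normalization this isomorphism is precisely $\eta$, settling the surjective case.

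The main obstacle is the general case, where $\epsilon$ need not be surjective: orbits no longer admit a representative with $e^\alpha=1$, and the reduction to $\Ker\epsilon$ breaks because forming kernels does not commute with quasi-isomorphism. Here I would argue directly by induction over small extensions $0\to(t)\to B\to A\to 0$. The $\mathfrak{g}$-component $e^\alpha$ lifts freely along such extensions and contributes nothing to the obstructions, so lifting Maurer--Cartan solutions and matching gauge equivalences are controlled entirely by $\phi$ on cohomology. A tangent-space computation ($A=\kk[t]/t^2$) makes this transparent, yielding
\[
T\,\Def_{L,\epsilon}=\Coker\left( L^0 \stackrel{(d,-\epsilon)}{\longrightarrow} Z^1(L)\oplus\mathfrak{g} \right)=H^1(\Cone(\epsilon)),
\]
on which $\eta$ is an isomorphism because $(\phi,\id_{\mathfrak{g}})$ is a quasi-isomorphism of cones. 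Surjectivity and injectivity at each inductive step then follow from the standard five-lemma argument for deformation functors; the delicate point is injectivity, where one must use both that $H^\ast(\phi)$ is onto (to lift a gauge that realizes an equivalence on the $L'$-side) and that it is injective (to conclude that the lifted gauge genuinely trivializes the difference on the $L$-side).
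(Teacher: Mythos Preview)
Your argument is sound in outline but takes a different route from the paper. The paper does not prove this lemma directly at the point where it is stated (it is attributed to \cite[2.7]{EyssidieuxSimpson}); instead, in the remark following Lemma~\ref{lemma:quasi-isomorphism-deformation-functor-augmented-infinity-MHD}, it recovers the statement via the $L_\infty$ machinery: the Fiorenza--Manetti structure on the desuspended cones $C,C'$ is functorial, so $(\phi,\id_{\mathfrak g})$ induces a strong morphism of $L_\infty$ algebras $C\to C'$; the five lemma on the long exact sequence of the cone shows this is a quasi-isomorphism; and then Lemma~\ref{lemma:main-augmented-deformation-Manetti-Eyssidieux-Simpson} together with the invariance of $\Def$ under $L_\infty$-quasi-isomorphisms gives the result uniformly, without separating a surjective case. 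Your approach is more elementary and self-contained: the reduction of the surjective case to $\Def_{\Ker\epsilon}$ is clean, avoids any $L_\infty$ input, and is exactly what one wants in the geometric applications where $\epsilon$ is evaluation at a point. The price is that the general case forces you into an ad~hoc induction on small extensions, which you only sketch; the cone argument handles this case with no extra effort.

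Two small points. First, a sign slip: to normalize $(x,e^\alpha)$ to $(\,\cdot\,,1)$ via the action~\eqref{equation:morphisms-deformation-functor-augmented} you need $\epsilon(\lambda)=\alpha$, not $-\alpha$, since $e^\alpha * e^{-\epsilon(\lambda)}=1$ means $e^{\epsilon(\lambda)}=e^\alpha$. Second, your general-case paragraph is really an outline rather than a proof: the phrase ``standard five-lemma argument for deformation functors'' hides the actual work (one needs, for each small extension, control of obstructions in $H^2$ and of automorphisms in $H^0$, not just the tangent space $H^1$). If you want to keep an elementary proof, it would be cleaner to note that $\Def_{L,\epsilon}$ is the fibre product $\Def_L\times_{\Def_{\mathfrak g}}\{*\}$ in the homotopy/groupoid sense and invoke invariance of $\Def_L$ together with the identity on the $\mathfrak g$-factor; otherwise the paper's cone argument is the shortest complete proof.
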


\section{The theory of Goldman and Millson}
\label{section:Goldman-Millson}
Then we give a brief account of \cite{GoldmanMillson}.
We fix a linear algebraic group $G$ over $\kk=\QQ$, $\RR$ or $\CC$. We think of
$G$ as a representable functor
\begin{equation}
G:\Alg_\kk\longrightarrow \Grp
\end{equation}
from the category of algebras over $\kk$ to the category of groups. We are
interested in studying a group $\Gamma$ by looking at its representations into
$G(\kk)$, and actually into all $G(A)$ for varying algebras $A$ over $\kk$.

\begin{definition}[{\cite{LubotzkyMagid}}]
\label{theorem:variety-of-representations}
For any finitely generated group $\Gamma$ the functor
\begin{equation}
\begin{array}{rcl} \Hom(\Gamma, G):\Alg_\kk & \longrightarrow & \Set \\ A & \longmapsto &
\Hom_{\Grp}(\Gamma, G(A)) \end{array}
\end{equation}
is represented by an affine scheme of finite type over $\kk$. We denote it again by
$\Hom(\Gamma, G)$ (we think of it as a scheme structure on the set $\Hom(\Gamma,
G(\kk))$) and call it the \emph{representation variety} of $\Gamma$ into $G$. If
$\rho$ is a representation of $\Gamma$ into $G(\kk)$, seen as a point over
$\kk$ of $\Hom(\Gamma, G)$, we denote by $\Ohat_\rho$ the completion of the
local ring of $\Hom(\Gamma, G)$ at $\rho$.
\end{definition}

The local ring $\Ohat_\rho$ will be one of our main objects of study. It is a
pro-Artin algebra whose corresponding pro-representable functor on local Artin
algebras over~$\kk$
\begin{equation}
\label{equation:pro-representable-functor-complete-local-ring}
\begin{array}{rcl} \Hom(\Ohat_\rho,-): \Art_\kk & \longrightarrow & \Set \\ A & \longmapsto &
\Hom_{\ProArt}(\widehat{\mathcal{O}}_\rho, A)
\end{array}
\end{equation} is canonically isomorphic to the functor of \emph{formal deformations of $\rho$}
\begin{equation}
\label{equation:functor-formal-deformations-representation} \Def_\rho : A \longmapsto \big\{
\tilde{\rho}:\Gamma\rightarrow G(A) \ \big|\ \tilde{\rho}=\rho \bmod\mathfrak{m}_A \big\}.
\end{equation}
Define $G^0$ to be the functor
\begin{equation}
\begin{array}{rcl} G^0:\Art_\kk & \longrightarrow & \Grp \\ A & \longrightarrow & \big\{ g \in G(A)\
\big|\ g=1_{G(\kk)} \bmod \mathfrak{m}_A \big\}
\end{array}
\end{equation}
then $G^0(A)$ acts on $\Def_\rho(A)$ by conjugation, functorially in $A$. We
simply denote by $\Def_\rho/G^0$ the quotient functor from $\Art_\kk$ to
$\Set$. If $\mathfrak{g}$ is the Lie algebra of $G$ then $G^0(A)$ has
$\mathfrak{g}\otimes \mathfrak{m}_A$ as Lie algebra and actually one can
construct it as
\begin{equation}
\label{equation:G0-exp} G^0(A) = \exp(\mathfrak{g}\otimes\mathfrak{m}_A) .
\end{equation}

Let now $X$ be a manifold whose fundamental group is finitely presentable. This
hypothesis encompasses both the compact Kähler manifolds and the smooth
algebraic varieties over $\CC$. Let $x$ be a base point of $X$. We are
interested in the deformation theory of representations of the fundamental group
$\pi_1(X,x)$. Over both fields $\RR$ and $\CC$, $G(\kk)$ is a Lie group. To a
representation
\begin{equation}
\rho:\pi_1(X,x) \longrightarrow G(\kk)
\end{equation}
corresponds a flat principal $G(\kk)$-bundle
\begin{equation}
P := \widetilde{X} \times_{\pi_1(X,x)} G(\kk)
\end{equation}
(where $\widetilde{X}$ is the universal cover of $X$, seen as a principal
$\pi_1(X,x)$-bundle). The adjoint bundle is
\begin{equation}
\Ad(P) := P\times_{G(\kk)}\mathfrak{g} = \widetilde{X} \times_{\pi_1(X,x)}
\mathfrak{g} 
\end{equation}
where $\pi_1(X,x)$ acts on $\mathfrak{g}$ via $\Ad\circ \rho$. Thus $\Ad(P)$ is a
local system of Lie algebras with fiber at $x$ canonically identified to
$\mathfrak{g}$. Let now
\begin{equation}
L := \mathscr{E}^\bullet(X, \Ad(P))
\end{equation}
be the DG Lie algebra of $\mathcal{C}^\infty$ differential forms with values in
$\Ad(P)$. Locally an element of $L$ is given by a  sum $\sum \alpha_i\otimes
u_i$ where $\alpha_i$ is a differential form and $u_i$ is an element of
$\mathfrak{g}$. The differential is given by
\begin{equation}
\label{equation:differential-L}
d(\alpha\otimes u) := d(\alpha) \otimes u
\end{equation}
and the Lie bracket is given by
\begin{equation}
\label{equation;Lie-bracket-L}
[\alpha\otimes u, \beta\otimes v] := (\alpha \wedge\beta) \otimes [u,v] .
\end{equation}
Part of the main theorem of Goldman and Millson can be stated as follows:

\begin{theorem}[{\cite{GoldmanMillson}}]
\label{theorem:Goldman-Millson-main}
There is a canonical isomorphism of deformation functors
\begin{equation}
\Def_L \stackrel{\simeq}{\longrightarrow} \Def_\rho/G^0 .
\end{equation}
\end{theorem}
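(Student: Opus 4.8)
The plan is to construct the isomorphism $\Def_L \to \Def_\rho/G^0$ explicitly on each local Artin algebra $A$ and then check it is natural in $A$ and well-defined on the quotients. The bridge between the two functors is the standard dictionary between flat connections and holonomy representations. Given a Maurer-Cartan element $x \in L^1 \otimes \mathfrak{m}_A = (\mathscr{E}^1(X,\Ad(P))) \otimes \mathfrak{m}_A$, I would regard $d + x$ as a deformation of the flat connection on $P$ defined by $\rho$. The Maurer-Cartan equation $d(x) + \tfrac{1}{2}[x,x] = 0$ is exactly the flatness (zero-curvature) condition for this deformed connection, so $d+x$ defines a new flat $G(A)$-bundle deformation of $P$, whose monodromy is a representation $\tilde\rho \colon \pi_1(X,x) \to G(A)$ reducing to $\rho$ modulo $\mathfrak{m}_A$. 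This monodromy construction is the candidate map on objects.

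First I would make the holonomy assignment precise: lift paths in $X$ to the universal cover and integrate the deformed connection $d+x$ along loops based at $x$, using that $\mathfrak{m}_A$ is nilpotent so the relevant path-ordered exponentials are finite sums and converge trivially. This produces $\tilde\rho \in \Def_\rho(A)$ from $x \in \MC(L \otimes \mathfrak{m}_A)$. Second, I would verify that the gauge action of $\exp(L^0 \otimes \mathfrak{m}_A)$ on Maurer-Cartan elements corresponds precisely to the conjugation action of $G^0(A) = \exp(\mathfrak{g} \otimes \mathfrak{m}_A)$ on representations: a gauge transformation $e^\lambda$ changes the connection by $d + x \mapsto d + e^\lambda \cdot x$ and correspondingly conjugates the holonomy, so that gauge-equivalent Maurer-Cartan elements give conjugate representations. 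This shows the map descends to a natural transformation $\Def_L \to \Def_\rho/G^0$ of the quotient functors.

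Third, I would establish that the induced map is a bijection on each $A$. Surjectivity comes from the fact that any deformation $\tilde\rho$ of $\rho$ arises from deforming the flat connection, because $\widetilde X$ is contractible and flat connections on $P$ correspond to representations; injectivity uses that two flat connections with conjugate holonomy differ by a gauge transformation, again because the bundle is determined by its monodromy on the simply-connected cover. Concretely one can reduce to a finite presentation of $\pi_1(X,x)$ and match the integrability of the connection with the group relations.

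I expect the main obstacle to be the careful identification of the holonomy map as a morphism of \emph{functors} rather than just a set map for fixed $A$: one must check that integrating the deformed connection is compatible with morphisms $A \to A'$ in $\Art_\kk$ and that the path-ordered integral is independent of the chosen representatives of homotopy classes of loops, which is where flatness (the Maurer-Cartan equation) is used crucially. A clean way to handle the functoriality and the independence of choices simultaneously is to phrase the construction in terms of the Deligne groupoid and the equivalence between flat $G(A)$-connections and $A$-points of the representation variety, rather than tracking explicit integrals; this keeps the naturality transparent while deferring the analytic details to the well-known correspondence between flat bundles and monodromy.
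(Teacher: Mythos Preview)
Your proposal is correct and aligns with the paper's treatment. Note that the paper does not actually prove this theorem: it is cited from \cite{GoldmanMillson}, and the paper offers only a one-paragraph heuristic (``a deformed version of the usual correspondence between representations of the fundamental group and principal bundles with a flat connection\ldots''). Your sketch is precisely a fleshed-out version of that heuristic --- Maurer--Cartan elements as flat deformations of the connection, holonomy giving the map to $\Def_\rho$, gauge action matching conjugation --- and this is indeed the standard Goldman--Millson argument. One minor sharpening: for injectivity/surjectivity it is cleaner, as you hint at the end, to invoke the equivalence of groupoids (flat $G(A)$-connections on $P$ modulo gauge) $\simeq$ (representations into $G(A)$ deforming $\rho$ modulo $G^0(A)$) directly, rather than passing through a finite presentation of $\pi_1$; the latter works but obscures the naturality you correctly flag as the delicate point.
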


Roughly, this is a deformed version of the usual correspondence between
representations of the fundamental group and principal bundles with a flat
connection: a representation of $\pi_1(X,x)$ modulo conjugation corresponds to a
flat principal bundle modulo automorphisms, or equivalently to a fixed principal
bundle with a flat connection modulo the action of the automorphism group on
flat connections.

However we are interested in the functor $\Def_\rho$ and not his quotient by
conjugation by $G^0$, because $\Def_\rho$ is pro-represented by the pro-Artin
algebra $\Ohat_\rho$.  For this we introduce the augmentation
\begin{equation}
\label{equation:augmentation-L}
\begin{array}{rcl}
\epsilon_x:L & \longrightarrow & \mathfrak{g} \\
\omega \in L^0 & \longmapsto & \omega(x) \\
\omega\in L^{>0} & \longmapsto & 0 .
\end{array}
\end{equation}
Then the correspondence of Goldman-Millson becomes the following:

\begin{theorem}[{\cite{GoldmanMillson}, \cite[2.7]{EyssidieuxSimpson}}]
\label{theorem:Goldman-Millson-main-augmented}
The isomorphism of Theorem~\ref{theorem:Goldman-Millson-main} induces a
canonical isomorphism of deformation functors
\begin{equation}
\Def_{L,\epsilon_x} \stackrel{\simeq}{\longrightarrow} \Def_\rho .
\end{equation}
\end{theorem}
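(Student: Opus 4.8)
The plan is to upgrade the Goldman--Millson correspondence of Theorem~\ref{theorem:Goldman-Millson-main} by keeping track of a framing at the base point, which is precisely the information that the quotient by $G^0$ discards. Recall that the construction underlying that theorem attaches to a Maurer--Cartan element $x\in L^1\otimes\mathfrak{m}_A$ the flat connection on $P\otimes A$ obtained by deforming the given one by $x$ (flat exactly because of the Maurer--Cartan equation), and that the canonical trivialization of the fibre at $x$ coming from $\rho$ yields a concrete deformed representation $\tilde\rho_x\in\Def_\rho(A)$ whose $G^0$-conjugacy class is the image of $[x]$. The first step is to isolate the two geometric facts refining this: (i) a gauge transformation $e^\lambda$ moves $x$ while conjugating the representation by its value at the base point, $\tilde\rho_{e^\lambda.x}=e^{\epsilon_x(\lambda)}\,\tilde\rho_x\,e^{-\epsilon_x(\lambda)}$; and (ii) an element of $G^0(A)$ centralizes $\tilde\rho_x$ if and only if it is the base-point value $e^{\epsilon_x(\lambda)}$ of a gauge transformation fixing $x$, that is, of a flat automorphism of the deformed bundle. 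Both come from the dictionary between flat connections and holonomy of \cite{GoldmanMillson}.

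Granting these, I would define the natural transformation by
\begin{equation}
\Phi_A(x,e^{\alpha}):=e^{\alpha}\,\tilde\rho_x\,e^{-\alpha}.
\end{equation}
Fact~(i) shows immediately that $\Phi_A$ is constant on the orbits of the gauge action \eqref{equation:morphisms-deformation-functor-augmented}, hence descends to $\Def_{L,\epsilon_x}(A)$, and reducing modulo $\mathfrak{m}_A$ (where $x=0$, $e^{\alpha}=1$, $\tilde\rho_0=\rho$) shows that the image lies in $\Def_\rho(A)$. The construction is manifestly natural in $A$ and is $G^0(A)$-equivariant for the conjugation action on $\Def_\rho$ and the left action $g.(x,e^{\alpha})=(x,g*e^{\alpha})$ on $\Def_{L,\epsilon_x}$ (which one checks readily descends to the quotient); moreover it covers the isomorphism of Theorem~\ref{theorem:Goldman-Millson-main} under the forgetful map to $\Def_L$ and the quotient map $\Def_\rho\to\Def_\rho/G^0$.

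It then remains to prove bijectivity, which I would establish fibrewise over $\Def_L(A)\cong(\Def_\rho/G^0)(A)$. Surjectivity is formal: by Theorem~\ref{theorem:Goldman-Millson-main} any $\tilde\rho$ lies in the same $G^0$-orbit as some $\tilde\rho_x$, so $\tilde\rho=e^{\alpha}\tilde\rho_x e^{-\alpha}=\Phi_A(x,e^{\alpha})$. For injectivity one first uses injectivity on $G^0$-quotients to bring two pairs with the same image to a common Maurer--Cartan representative $x$; the comparison then takes place inside a single orbit, where $\Phi_A$ induces the orbit map $G^0(A)/H_x\to G^0(A)/\operatorname{Stab}(\tilde\rho_x)$ with $H_x=\{\,e^{\epsilon_x(\lambda)}\mid e^{\lambda}.x=x\,\}$, a bijection exactly when $H_x=\operatorname{Stab}_{G^0(A)}(\tilde\rho_x)$.

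The main obstacle is precisely this identification of stabilizers. Well-definedness of $\Phi_A$, i.e.\ fact~(i), already yields for free the easy inclusion $H_x\subseteq\operatorname{Stab}(\tilde\rho_x)$, so surjectivity on each fibre and the whole of surjectivity follow. The reverse inclusion $\operatorname{Stab}(\tilde\rho_x)\subseteq H_x$ — that every element of $G^0(A)$ centralizing the holonomy integrates to a flat gauge automorphism — is the one genuinely geometric input, i.e.\ fact~(ii), and it is what secures injectivity. I would extract it from the parallel-transport construction of \cite{GoldmanMillson} rather than from the formal statement of Theorem~\ref{theorem:Goldman-Millson-main} alone; with it in hand the orbit maps above are bijective, and $\Phi_A$ is the desired isomorphism $\Def_{L,\epsilon_x}\xrightarrow{\simeq}\Def_\rho$, compatibly with \cite[2.7]{EyssidieuxSimpson}.
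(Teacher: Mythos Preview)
The paper does not give its own proof of this theorem: it is stated with the attribution \cite{GoldmanMillson}, \cite[2.7]{EyssidieuxSimpson} and then used as a black box. So there is nothing to compare against here.

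Your argument is correct and is essentially the expected unpacking of those references. The map $\Phi_A(x,e^{\alpha})=e^{\alpha}\tilde\rho_x e^{-\alpha}$ is the right one; the check of well-definedness under $e^\lambda.(x,e^\alpha)=(e^\lambda.x,\,e^\alpha * e^{-\epsilon_x(\lambda)})$ goes through exactly as you say using fact~(i), and the fibrewise bijectivity argument is clean. You correctly isolate the only nontrivial point as fact~(ii), the equality $\operatorname{Stab}_{G^0(A)}(\tilde\rho_x)=\{\,e^{\epsilon_x(\lambda)}\mid e^{\lambda}.x=x\,\}$, which is indeed the statement that flat automorphisms of the deformed bundle are determined by (and determined from) their value at the base point. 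That follows from the standard parallel-transport dictionary of \cite{GoldmanMillson} (an element commuting with all holonomies extends to a flat section of the automorphism bundle), so your appeal to that reference is appropriate.

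Two minor remarks. First, the symbol $x$ plays two roles in your write-up: the base point of $X$ (as in $\epsilon_x$) and the Maurer--Cartan element. This is inherited from the paper's notation, but it would help readability to rename one of them. Second, you silently identify the BCH product $*$ in $\exp(\mathfrak{g}\otimes\mathfrak{m}_A)$ with juxtaposition in $G^0(A)$; this is of course the content of \eqref{equation:G0-exp}, but a one-line reminder would be in order if this were to stand alone as a proof.
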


When $X$ is a compact Kähler manifold and $\rho$ has image contained in a
compact subgroup of $G(\RR)$, or $\rho$ is the monodromy of a polarized
variation of Hodge structure over $X$, then $L$ is formal (i.e.\
quasi-isomorphic to a DG Lie algebra with zero differential) and the functor
$\Def_{L,\epsilon_x}$ simplifies much thanks to
Theorem~\ref{theorem:main-theorem-deformation-theory} (or its augmented variant,
Lemma~\ref{lemma:quasi-isomorphism-deformation-functor-augmented}). This allows
Goldman and Millson to give a description of $\Ohat_\rho$ via its property that
it pro-represents the deformation functor $\Def_{L,\epsilon}$ and one can
construct quite easily such an object when $L$ has $d=0$. However to deal with
more general geometric situations where $L$ is not formal we must study more
carefully how to pro-represent $\Def_{L,\epsilon_x}$.

\section{\texorpdfstring{$L_\infty$}{L-infinity} algebras and pro-representability}
\label{section:L-inf}
We will need to work with $L_\infty$ algebras for two well-distinct reasons. The
first one is for giving a pro-representability theorem for the deformation
functor of DG Lie algebras. The second one is for explaining how the augmented
deformation functor of $\epsilon:L\rightarrow\mathfrak{g}$ comes from a
$L_\infty$ algebra structure on the mapping cone of $\epsilon$. Our reference is
again the lectures notes \cite[\S~VIII--IX]{ManettiLectures}. Through the
sections~\ref{section:MHD-L-infinity}--\ref{section:bar-MHD} we will show that these constructions have
good compatibilities with Hodge theory.

Briefly, $L_\infty$ algebras are weakened versions of DG Lie algebras in which
the Jacobi identity only holds \emph{up to homotopy}. Such a $L_\infty$ algebra
is given by a graded vector space $L$ with a sequence of anti-symmetric linear maps
\begin{equation}
\label{equation:L-infinity-maps-l}
\ell_r : L^{\wedge r} \longrightarrow L \quad (r\geq 1)
\end{equation}
where $\ell_r$ has degree $2-r$.  The precise definition is best stated in terms
of codifferential on a cofree conilpotent coalgebra and we will need this point
of view.

So let us say some words on coalgebras first. In the following we will work with
DG coalgebras $X$, equipped with a comultiplication
\begin{equation}
\Delta:X \longrightarrow X \otimes X
\end{equation}
that is always assumed to be coassociative and cocomutative, and with a
differential $d$ on $X$ that is a coderivation for $\Delta$; it is called the
\emph{codifferential}.

\begin{definition}
On a graded coalgebra $X$, there is a \emph{canonical filtration} indexed by
$\NN$ given by
\begin{equation}
X_n := \Ker\big( \Delta^n:X\rightarrow X^{\otimes (n+1)} \big)
\end{equation}
where $\Delta^n$ is the iterated comultiplication. The coalgebra is said to be
\emph{conilpotent} (terminology of \cite[1.2.4]{LodayValette}; these are called
\emph{locally conilpotent} in \cite{ManettiLectures}) if the canonical filtration is exhaustive, i.e.
\begin{equation}
X = \bigcup_{n\geq 0} X_n .
\end{equation}
\end{definition}

The dual of a coalgebra, in our sense, is a commutative algebra without
unit. Conilpotent coalgebras are dual to maximal ideals of complete local
algebras and conilpotent coalgebras with finite canonical filtration are dual to
maximal ideals of local algebras with nilpotent maximal ideal. So if $X$ is such
a coalgebra then $\kk\oplus X^*$ is an algebra with unit and with maximal ideal
$X^*$.

\begin{lemma}
\label{lemma:conilpotent-coalgebras-dual-pro-artin}
If $X$ is a conilpotent coalgebra whose canonical filtration is by
finite-dimensional sub-coalgebras, then its dual $X^*$ is the maximal ideal of a
pro-Artin algebra and conversely the dual of a pro-Artin algebra is such a
conilpotent coalgebra.
\end{lemma}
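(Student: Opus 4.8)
The plan is to exhibit two mutually inverse constructions via linear duality, the central point being that the $\mathfrak{m}$-adic filtration on the algebra side corresponds to the canonical filtration on the coalgebra side. Throughout, for a subspace $V\subseteq X$ I write $V^\perp\subseteq X^*$ for its annihilator, and I use repeatedly that for a \emph{finite-dimensional} space double annihilation is the identity, while for infinite-dimensional spaces it is only a closure operation.

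First I would treat the direction sending a coalgebra to an algebra. Given $X$ as in the statement, the comultiplication dualizes to a product on $X^*$ via $\langle fg,\xi\rangle=\langle f\otimes g,\Delta\xi\rangle$; coassociativity and cocommutativity of $\Delta$ make this an associative commutative non-unital product, so $R:=\kk\oplus X^*$ is a commutative unital $\kk$-algebra with ideal $\mathfrak{m}:=X^*$ and $R/\mathfrak{m}=\kk$. Writing $X=\bigcup_n X_n=\varinjlim_n X_n$ as the union of its finite-dimensional subcoalgebras gives $X^*=\varprojlim_n X_n^*$, whence $R=\varprojlim_n(\kk\oplus X_n^*)$. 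Each $X_n$ is itself finite-dimensional and conilpotent, so its canonical filtration stabilizes at $X_n$; dualizing, some power of $X_n^*$ vanishes, i.e.\ $X_n^*$ is nilpotent, and $\kk\oplus X_n^*$ is a local Artin algebra. Thus $R$ is a projective limit of local Artin algebras, hence complete local.

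It remains to identify the $\mathfrak{m}$-adic filtration, and I claim $\mathfrak{m}^k=X_{k-1}^\perp$. The inclusion $\mathfrak{m}^k\subseteq X_{k-1}^\perp$ is immediate from $\langle f_1\cdots f_k,\xi\rangle=\langle f_1\otimes\cdots\otimes f_k,\Delta^{k-1}\xi\rangle$, which vanishes for $\xi\in X_{k-1}=\Ker\Delta^{k-1}$. For the reverse inclusion I reduce to finite dimensions: since $X_{k-1}$ is finite-dimensional I may choose $n\geq k-1$, and the compatibility $(X_n)_j=X_n\cap X_j=X_j$ (for $n\geq j$) lets me compute inside the finite-dimensional local Artin algebra $\kk\oplus X_n^*$, where double annihilation gives $(X_n^*)^k=X_{k-1}^\perp$ (annihilator taken inside $X_n^*$). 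Passing to the limit then yields $R/\mathfrak{m}^k\cong\kk\oplus X_{k-1}^*$, which is finite-dimensional; together with completeness this is exactly the pro-Artin condition. For the converse I start from a pro-Artin algebra $R$ and set $X:=\varinjlim_k(\mathfrak{m}_R/\mathfrak{m}_R^{k+1})^*$, the continuous dual of $\mathfrak{m}_R$; each term is finite-dimensional since $R/\mathfrak{m}_R^{k+1}$ is Artin. Dualizing the associative commutative non-unital multiplication on $\mathfrak{m}_R$ produces a coassociative cocommutative comultiplication on $X$; the finite-dimensional double-annihilator identity identifies the canonical filtration as $X_{k-1}=(\mathfrak{m}_R^k)^\perp\cong(\mathfrak{m}_R/\mathfrak{m}_R^k)^*$, so each piece is finite-dimensional, and since $R$ is complete, $\mathfrak{m}_R=\varprojlim_k\mathfrak{m}_R/\mathfrak{m}_R^k$, so $X$ is exhausted by the $X_{k-1}$, i.e.\ conilpotent. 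These two constructions are visibly inverse to one another.

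The main obstacle I anticipate is precisely the filtration-matching identity $\mathfrak{m}^k=X_{k-1}^\perp$ and, relatedly, checking that the $\mathfrak{m}$-adic topology on $R=\kk\oplus X^*$ agrees with the profinite topology coming from $R=\varprojlim_n(\kk\oplus X_n^*)$, since in infinite dimensions the $k$-th power of an ideal need not commute with the inverse limit and $\mathfrak{m}^k$ need not be closed. The identity is transparent in finite dimensions via double annihilation, so the genuine work is bookkeeping the passage to the limit: verifying that $\mathfrak{m}^k$ maps compatibly onto the $(X_n^*)^k$ and that the quotients $R/\mathfrak{m}^k$ stabilize to $\kk\oplus X_{k-1}^*$. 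The remaining verifications—that the dual of a coalgebra is a commutative non-unital algebra and conversely—are routine diagram-chases from (co)associativity and (co)commutativity.
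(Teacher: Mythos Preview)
Your proposal is correct and follows the same line as the paper's proof, which is in fact a two-sentence sketch: dualize to get $R=\kk\oplus X^*$ as an inverse limit of the finite-dimensional local algebras $\kk\oplus X_n^*$, and conversely dualize the inverse system $R/\mathfrak{m}_R^n$ to an inductive system of finite-dimensional coalgebras. You supply considerably more detail than the paper does; in particular the identity $\mathfrak{m}^k=X_{k-1}^\perp$ and the agreement of the $\mathfrak{m}$-adic and profinite topologies are genuine points that the paper simply asserts, and your reduction to the finite-dimensional case via $((X_n^*)^k)^\perp=X_{k-1}$ (which follows from $(C^{\otimes k})^*=(C^*)^{\otimes k}$ in finite dimensions) is the right way to handle them.
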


\begin{proof}
For such an $X$, $R:=\kk\oplus X^*$ is a complete local algebra with maximal
ideal $X^*$ whose quotients are finite-dimensional, hence $R$ is
pro-Artin. Conversely if $R$ is pro-Artin then $R$ is a projective limit of
finite-dimensional algebras, which dualizes to an inductive limit of coalgebras,
defining a conilpotent coalgebra $X$ with filtration by finite-dimensional sub-coalgebras.
\end{proof}

\begin{definition}[{\cite[VIII.24]{ManettiLectures}}]
\label{definition:cofree-coalgebra}
Let $V$ be a graded vector space. The \emph{cofree (conilpotent) coalgebra} on
$V$ is the graded vector space given by the reduced symmetric algebra
\begin{equation}
\FreeGCoAlg(V) := \Sym^{+}(V) = \bigoplus_{r\geq 1} V^{\odot r}
\end{equation}
(we denote by $\odot$ the symmetric product) endowed with the comultiplication
\begin{multline}
\label{equation:deconcatenation-coproduct} \Delta(v_1 \odot\cdots
\odot v_r) := \\ \sum_{p=0}^r \sum_{\tau\in\mathfrak{S}(p,r-p)}
\epsilon(\tau;v_1,\dots,v_r)\,(v_{\tau(1)} \odot \cdots \odot
v_{\tau(p)}) \otimes (v_{\tau(p+1)} \odot \cdots \odot v_{\tau(r)}) .
\end{multline}
where $\mathfrak{S}(p,r-p)\subset\mathfrak{S}(r)$ is the set of permutations of
$\{1,\dots,r\}$ that are increasing on $\{1,\dots,p\}$ and on $\{p+1,\dots,r\}$.
\end{definition}

This construction is a right adjoint to the forgetful functor from graded
conilpotent coalgebras to graded vector spaces. The canonical filtration here is
simply the natural increasing filtration of the symmetric algebra:
\begin{equation}
\label{equation:canonical-filtration-cofree-coalgebra}
\FreeGCoAlg(V)_n := \bigoplus_{r=1}^n V^{\odot r}.
\end{equation}
A coderivation $Q$ on $V$ is
determined uniquely by its components (\cite[VIII.34]{ManettiLectures})
\begin{equation}
q_r : V^{\odot r} \longrightarrow V \quad (r\geq 1).
\end{equation}

For any DG vector space $V$ and integer $i\in\ZZ$ we define the \emph{shift}
$V[i]$ which has
\begin{equation}
V[i]^n := V^{n+i}
\end{equation}
and differential $d_{V[i]}=(-1)^i d_V$. We denote by $x[i]$ an element of $V[i]$
obtained by shifting the degree by $i$ from an element $x$ of $V$ (if $x$ has
degree $n$ in $V$ then $x[i]$ is the element $x$ with degree $n-i$ in $V[i]$,
since $V[i]^{n-i}=V^n$).

\begin{definition}
A \emph{$L_\infty$ algebra} is the data of a graded vector space $L$ and a
codifferential $Q$ on $\FreeGCoAlg(V[1])$. We denote by $\mathscr{C}(L)$ this coalgebra.
\end{definition}

Thus the structure of $L_\infty$ algebra on $L$ is determined either by the maps
\begin{equation}
q_r:(L[1])^{\odot r} \longrightarrow L[1]
\end{equation}
of degree $1$, either by the maps $\ell_r$ of~\eqref{equation:L-infinity-maps-l}
obtained as shifts of $q_r$ (since we take symmetric and exterior products in
the graded sense, there is a canonical isomorphism
$(L[1])^{\odot r} \simeq L^{\wedge r}[r]$), with all axioms encoded in the
relation $Q\circ Q=0$. DG Lie algebras correspond exactly to $L_\infty$ algebras
with $\ell_r=0$ for $r\geq 3$, then $\ell_1$ is the differential of $L$ and
$\ell_2$ is the Lie bracket. The construction $\mathscr{C}$ is known in the
literature under the names of \emph{Quillen $\mathscr{C}$ functor}, \emph{bar
construction} or (homological) \emph{Chevalley-Eilenberg complex}. We prefer the
term of bar construction and we call the canonical
filtration~\eqref{equation:canonical-filtration-cofree-coalgebra} of
$\mathscr{C}(L)$ the \emph{bar filtration}.

\begin{definition}
A (strong) \emph{morphism} between $L_\infty$ algebras $L,M$ is a morphism of
graded vector spaces $L\rightarrow M$ that commutes with all the operations
$\ell_r$. A \emph{$L_\infty$-morphism} is a morphism of graded coalgebras
$\mathscr{C}(L)\rightarrow\mathscr{C}(M)$. 
\end{definition}

Any morphism induces canonically an $L_\infty$-morphism, and a
$L_\infty$-morphism has a \emph{linear part} which is a morphism of DG vector
spaces (i.e.\ commuting with $\ell_1$, the differential).

\begin{theorem}[{\cite[IX.9]{ManettiLectures}}]
\label{theorem:L-infinity-quasi-isomorphism}
A $L_\infty$-morphism from $L$ to $M$ whose linear part is a quasi-isomorphism
induces a quasi-isomorphism of DG coalgebras from $\mathscr{C}(L)$ to
$\mathscr{C}(M)$. 
\end{theorem}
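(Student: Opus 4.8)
The plan is to put the bar filtration on both $\mathscr{C}(L)$ and $\mathscr{C}(M)$ and to reduce the statement to the associated graded pieces. First I would note that an $L_\infty$-morphism, being by definition a morphism of graded coalgebras $F\colon\mathscr{C}(L)\to\mathscr{C}(M)$, automatically preserves the canonical filtration: the latter is defined intrinsically by the coproduct as $X_n=\Ker(\Delta^n)$, so the relation $\Delta^n\circ F=F^{\otimes(n+1)}\circ\Delta^n$ forces $F(\mathscr{C}(L)_n)\subseteq\mathscr{C}(M)_n$. Since the codifferential $Q$ likewise preserves the bar filtration (its components $q_r$ never raise the symmetric degree), $F$ becomes a morphism of filtered complexes $F_n\colon\mathscr{C}(L)_n\to\mathscr{C}(M)_n$.

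Next I would identify the associated graded. Because $Q$ is a coderivation whose only symmetric-degree-preserving component is $q_1=\ell_1$ (every higher $q_r$ strictly lowers the symmetric degree), the differential induced on $\Gr_n\mathscr{C}(L)=(L[1])^{\odot n}$ is precisely $\sum_i\pm v_1\odot\cdots\odot\ell_1(v_i)\odot\cdots\odot v_n$, that is, the $n$-th symmetric power of the complex $(L[1],\ell_1)$. Likewise the higher Taylor coefficients $f_r$ ($r\geq 2$) of $F$ lower the symmetric degree, so the induced map on the graded piece is $\Gr_nF=(f_1)^{\odot n}$, the $n$-th symmetric power of the linear part $f_1\colon(L[1],\ell_1)\to(M[1],\ell_1)$, which is a quasi-isomorphism by hypothesis.

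The key technical step is then to show that $(f_1)^{\odot n}$ remains a quasi-isomorphism. This is where the assumption $\mathrm{char}\,\kk=0$ enters: taking $\mathfrak{S}_n$-invariants is exact in characteristic zero (via the symmetrization idempotent), so combined with the Künneth formula one gets $H^\bullet(V^{\odot n})\cong(H^\bullet(V))^{\odot n}$ naturally in the complex $V$. Hence $H^\bullet((f_1)^{\odot n})=(H^\bullet(f_1))^{\odot n}$, and since $H^\bullet(f_1)$ is an isomorphism so is its symmetric power. Thus every $\Gr_nF$ is a quasi-isomorphism.

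Finally I would conclude by a standard filtration argument. From the short exact sequences $0\to\mathscr{C}(L)_{n-1}\to\mathscr{C}(L)_n\to\Gr_n\mathscr{C}(L)\to 0$ and their analogues for $M$, an induction on $n$ using the five lemma on the long exact cohomology sequences shows that each $F_n$ is a quasi-isomorphism, the base case $n=1$ being $f_1$ itself. The bar filtration is exhaustive, so $\mathscr{C}(L)=\varinjlim_n\mathscr{C}(L)_n$, and since cohomology commutes with filtered colimits, passing to the limit gives that $F$ is a quasi-isomorphism. I expect the only real subtlety to be the characteristic-zero input at the symmetric-power step, without which $(-)^{\odot n}$ need not preserve quasi-isomorphisms; the remaining bookkeeping is routine once one checks that the filtration is exhaustive and bounded below, which legitimizes the induction-and-colimit argument in place of a more delicate spectral-sequence convergence.
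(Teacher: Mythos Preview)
The paper does not give its own proof of this theorem; it is stated with a citation to \cite[IX.9]{ManettiLectures} and used as a black box. Your argument is correct and is precisely the standard proof: filter by symmetric length, observe that on the associated graded only $q_1$ and $f_1$ survive, use Künneth plus exactness of $\mathfrak{S}_n$-invariants in characteristic zero, then climb the filtration by the five lemma and pass to the colimit. All the potential pitfalls (preservation of the canonical filtration by coalgebra maps, the characteristic-zero hypothesis, exhaustiveness) are addressed.

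It is worth remarking that, although the paper does not prove this statement, it does deploy the same machinery elsewhere: in the proof of Theorem~\ref{theorem:mixed-Hodge-diagram-bar} the key observation~\eqref{eq:q-W-bar} is exactly that $q_r$ for $r\geq 2$ dies on the bar-weight graded, and the Künneth argument for exterior powers is invoked in the comparison-quasi-isomorphism step there. So your approach is entirely in the spirit of the paper.
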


We call such morphisms \emph{$L_\infty$-quasi-isomorphisms}. The fundamental
theorem of deformation theory with $L_\infty$ algebras takes the following form.

\begin{theorem}[{\cite[\S~IX]{ManettiLectures}}]
To any $L_\infty$ algebra $L$ over $\kk$ is associated a deformation functor
\begin{equation}
\Def_L : \Art_\kk \longrightarrow \Set
\end{equation}
which coincides for DG Lie algebras with the usual deformation functor of
Definition~\ref{definition:DG-Lie-deformation-functor} and is invariant under
$L_\infty$-quasi-isomorphisms.
\end{theorem}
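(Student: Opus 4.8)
The plan is to build $\Def_L$ by exactly the same recipe used for DG Lie algebras in Definition~\ref{definition:DG-Lie-deformation-functor}—Maurer–Cartan elements modulo gauge—but now feeding in the whole tower of brackets $\ell_r$, and then to read off the two compatibility assertions. First I would define, for $A\in\Art_\kk$ with maximal ideal $\mathfrak m_A$, the nilpotent $L_\infty$ algebra $L\otimes\mathfrak m_A$ and set
\[
\MC(L\otimes\mathfrak m_A):=\Bigl\{\,x\in L^1\otimes\mathfrak m_A \ \Bigm|\ \sum_{r\ge 1}\tfrac1{r!}\,\ell_r(x,\dots,x)=0\,\Bigr\},
\]
the sum being finite since $\mathfrak m_A$ is nilpotent. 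Equivalently, $x$ is Maurer–Cartan precisely when the group-like element $e^{x}=\sum_r\tfrac1{r!}x^{\odot r}$ of the completed coalgebra $\mathscr C(L)\,\widehat\otimes\,A$ is annihilated by the codifferential $Q$; this coalgebraic reformulation is what makes naturality in $A$ transparent. On $\MC$ the group $\exp(L^0\otimes\mathfrak m_A)$ acts by the gauge action, obtained by integrating, for $a\in L^0\otimes\mathfrak m_A$, the vector field whose value at $x$ is
\[
\ell_1(a)+\sum_{r\ge 1}\tfrac1{r!}\,\ell_{r+1}(a,\underbrace{x,\dots,x}_{r})
\]
(with the standard Koszul signs); one checks this field is tangent to the Maurer–Cartan locus, so its flow preserves $\MC$. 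I would then set $\Def_L(A):=\MC(L\otimes\mathfrak m_A)/\exp(L^0\otimes\mathfrak m_A)$; since any morphism $A\to B$ carries $\mathfrak m_A$ into $\mathfrak m_B$ and hence commutes with every $\ell_r$ and with the gauge flow, this is a functor $\Art_\kk\to\Set$.

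When $\ell_r=0$ for $r\ge 3$, so that $\ell_1=d$ is the differential and $\ell_2=[-,-]$ is the bracket, the Maurer–Cartan equation collapses to $d(x)+\tfrac12[x,x]=0$ and the gauge vector field to $x\mapsto d(a)+[a,x]$, whose integral is exactly the exponential (Baker–Campbell–Hausdorff) action of $\exp(L^0\otimes\mathfrak m_A)$. Hence $\Def_L$ reproduces Definition~\ref{definition:DG-Lie-deformation-functor} on the nose, and no further identification is needed.

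The substance of the statement is the invariance, and this is where I expect the real work. Let $F:\mathscr C(L)\to\mathscr C(M)$ be an $L_\infty$-quasi-isomorphism, with Taylor components $f_r:(L[1])^{\odot r}\to M[1]$ and linear part $f_1$ a quasi-isomorphism. Because $F$ is a morphism of coalgebras it carries group-like elements to group-like elements, so reading off the $M[1]$-component of $F(e^{x})$ gives
\[
F_*:\MC(L\otimes\mathfrak m_A)\longrightarrow\MC(M\otimes\mathfrak m_A),\qquad x\longmapsto\sum_{r\ge 1}\tfrac1{r!}\,f_r(x,\dots,x),
\]
and since $F$ also commutes with the codifferentials it sends $Q$-closed elements to $Q$-closed elements and intertwines the gauge actions. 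Thus $F_*$ descends to a natural transformation $\Def_L\to\Def_M$. It remains to prove this is a bijection for every $A$, and I would argue by induction on the nilpotency length of $\mathfrak m_A$: a general $A$ is built from $\kk$ by a chain of small extensions
\[
0\longrightarrow J\longrightarrow A'\longrightarrow A\longrightarrow 0,\qquad J\cdot\mathfrak m_{A'}=0,
\]
so it suffices to treat the square-zero step. For such an extension the nonempty fibres of $\Def_L(A')\to\Def_L(A)$ are acted on transitively by $H^1(L)\otimes J$ and the obstruction to lifting a class lives in $H^2(L)\otimes J$, all of this being natural in $L$; since $F$ is a quasi-isomorphism, $H^1(f_1)$ and $H^2(f_1)$ are isomorphisms, and a five-lemma-type comparison of the two deformation sequences forces $F_*$ to be bijective at each step. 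The main obstacle is precisely to set up this tangent–obstruction comparison cleanly for $L_\infty$ (rather than DG Lie) algebras—showing that the $H^1\otimes J$-action on fibres and the obstruction map are natural under $L_\infty$-morphisms and are computed through the linear part $f_1$ alone. An alternative that sidesteps the obstruction bookkeeping is to invoke that every $L_\infty$-quasi-isomorphism admits a homotopy inverse, together with the fact that homotopic $L_\infty$-morphisms induce the same map on $\MC$ modulo gauge; then $F_*$ is an isomorphism with inverse induced by the homotopy inverse. Either way, the crux is to upgrade Theorem~\ref{theorem:L-infinity-quasi-isomorphism}, which is a statement about $\mathscr C$, to the corresponding statement about $\Def_L$.
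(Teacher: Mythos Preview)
The paper does not give its own proof of this theorem: it is stated with a citation to \cite[\S~IX]{ManettiLectures} and used as a black box. So there is no argument in the paper to compare against; your sketch is essentially an outline of the standard proof found in that reference.

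One point deserves correction. For a general $L_\infty$ algebra, $L^0\otimes\mathfrak{m}_A$ is \emph{not} a Lie algebra (the Jacobi identity for $\ell_2$ holds only up to the homotopy $\ell_3$), so there is no Baker--Campbell--Hausdorff product and no group $\exp(L^0\otimes\mathfrak{m}_A)$ in the usual sense. The standard definition, and the one in Manetti's notes, defines gauge equivalence as \emph{homotopy}: $x_0\sim x_1$ if there exists a Maurer--Cartan element of $(L\otimes\mathfrak{m}_A)[t,dt]$ restricting to $x_0$ at $t=0$ and to $x_1$ at $t=1$. Your vector-field description is correct, and integrating those flows generates exactly this equivalence relation, but in the $L_\infty$ setting it is an equivalence relation rather than the orbit relation of a group action. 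With this adjustment your reduction to the DG Lie case goes through (there the homotopy relation does coincide with the $\exp(L^0\otimes\mathfrak{m}_A)$-orbits), and both of your proposed routes to invariance---induction along small extensions via the tangent/obstruction spaces $H^1(L)\otimes J$ and $H^2(L)\otimes J$, or the existence of $L_\infty$ homotopy inverses---are valid and standard.
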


Then we state the pro-representability theorem we will use. This appears in
various places: it is stated for example in the survey by Toën \cite{Toen}, in
the article of Hinich \cite[\S~9.3]{Hinich}, and in the lecture notes of
Kontsevich \cite{Kontsevich}. But we never find it written exactly in the form
we want in the literature, especially because of the duality and the
finite-dimensionality issue, so we give here the simplest possible proof using
only the above theorems.

\begin{theorem}[Pro-representability]
\label{theorem:pro-representability-theorem}
Let $L$ be a $L_\infty$ algebra with $H^n(L)=0$ for $n\leq 0$ and $H^1(L)$
finite-dimensional. Then $H^0(\mathscr{C}(L))$ is a coalgebra satisfying the
hypothesis of Lemma~\ref{lemma:conilpotent-coalgebras-dual-pro-artin} and
\begin{equation}
R: = \kk \oplus H^0(\mathscr{C}(L))^*
\end{equation}
is a pro-Artin algebra that pro-represents the deformation functor of $L$ and is
invariant under $L_\infty$-quasi-isomorphisms.
\end{theorem}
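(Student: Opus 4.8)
The plan is to establish the three assertions in turn: that $H^0(\mathscr{C}(L))$ satisfies the hypotheses of Lemma~\ref{lemma:conilpotent-coalgebras-dual-pro-artin}, that $R$ pro-represents $\Def_L$, and that the whole construction is invariant under $L_\infty$-quasi-isomorphisms. The key organizing idea is that all three should be reduced, via Theorem~\ref{theorem:L-infinity-quasi-isomorphism} and the invariance of $\Def_L$, to the case where $L$ is \emph{minimal}, i.e.\ has $\ell_1=0$ so that $L=H(L)$. Indeed, any $L_\infty$ algebra admits a minimal model: an $L_\infty$-quasi-isomorphism from $H(L)$ (equipped with a transferred $L_\infty$ structure) to $L$. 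Since the hypotheses are stated purely in terms of the cohomology $H^\bullet(L)$, they are preserved under passing to the minimal model, and by Theorem~\ref{theorem:L-infinity-quasi-isomorphism} the bar constructions are quasi-isomorphic, so $H^0(\mathscr{C}(L))$ is unchanged up to isomorphism. Thus I may assume $L$ is minimal with $L^n=H^n(L)$, hence $L^n=0$ for $n\leq 0$ and $L^1$ finite-dimensional.

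For the first assertion, under the minimality reduction the computation of $H^0(\mathscr{C}(L))$ becomes tractable. Recall $\mathscr{C}(L)=\FreeGCoAlg(L[1])$ as a graded coalgebra, with $L[1]$ concentrated in degrees $\leq 0$ and $(L[1])^0=L^1$ finite-dimensional. The bar filtration~\eqref{equation:canonical-filtration-cofree-coalgebra} is a filtration by sub-coalgebras, each finite-dimensional in degree $0$ because $\FreeGCoAlg(L[1])_n$ in degree $0$ is built from symmetric products of the finite-dimensional space $L^1$ together with higher negative-degree pieces paired against them; since $L$ is concentrated in degrees $\geq 1$, the degree-$0$ part of $(L[1])^{\odot r}$ involves only $L^1$ in each of the $r$ slots when $r$ contributions sum to weight $0$, and I would check that each graded piece of $H^0(\mathscr{C}(L))$ is finite-dimensional. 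The codifferential $Q$ restricts to the bar filtration, so $H^0(\mathscr{C}(L))$ inherits an exhaustive filtration by finite-dimensional sub-coalgebras, which is exactly the conilpotence-with-finite-filtration hypothesis of Lemma~\ref{lemma:conilpotent-coalgebras-dual-pro-artin}. Therefore $R=\kk\oplus H^0(\mathscr{C}(L))^*$ is pro-Artin.

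For the pro-representability, the strategy is to exhibit, for each $A\in\Art_\kk$, a natural bijection
\begin{equation}
\Hom_{\ProArt}(R,A)\;\simeq\;\Def_L(A).
\end{equation}
Using $R=\kk\oplus H^0(\mathscr{C}(L))^*$, a morphism $R\to A$ preserving maximal ideals is the same as a morphism of coalgebras from the dual of $\mathfrak{m}_A$ into $H^0(\mathscr{C}(L))$, hence by the universal property of the cofree coalgebra and the computation of $H^0$ it corresponds to a gauge-equivalence class of Maurer--Cartan elements in $L\otimes\mathfrak{m}_A$. This is precisely the standard identification, in the minimal case, of the bar construction's degree-zero cohomology with the functor of points of $\Def_L$; I would verify naturality in $A$ and that the equivalence relation induced on the coalgebra side matches gauge equivalence. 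Finally, invariance under $L_\infty$-quasi-isomorphisms follows by combining Theorem~\ref{theorem:L-infinity-quasi-isomorphism} (a quasi-isomorphism on bar constructions, hence an isomorphism on $H^0$ and on the dual algebra $R$) with the invariance of $\Def_L$ itself.

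I expect the main obstacle to be the first assertion: controlling the finite-dimensionality of each graded piece of $H^0(\mathscr{C}(L))$. The delicate point is that while $L^1$ is finite-dimensional, the higher cohomologies $H^{\geq 2}(L)$ need not be, and they enter $(L[1])^{\odot r}$ in negative degrees; one must check carefully that these infinite-dimensional pieces do not survive into degree $0$ of the \emph{cohomology} of $\mathscr{C}(L)$ after taking the codifferential into account, rather than merely into degree $0$ of the underlying graded vector space. Establishing that $Q$ kills or pairs off these contributions, leaving only finite-dimensional graded pieces coming from $L^1$, is the crux, and it is where the hypotheses $H^n(L)=0$ for $n\leq 0$ and $H^1(L)$ finite-dimensional are genuinely used.
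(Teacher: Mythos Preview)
Your overall strategy---reduce to a minimal model, then compute directly---is exactly the paper's approach, and the invariance argument via Theorem~\ref{theorem:L-infinity-quasi-isomorphism} is the same. But you have the grading of $L[1]$ backwards, and this leads you to invent a difficulty that is not there.

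With the paper's convention $V[i]^n=V^{n+i}$, if $L^n=0$ for $n\leq 0$ then $L[1]$ is concentrated in degrees $\geq 0$ (not $\leq 0$): indeed $L[1]^n=L^{n+1}$ vanishes for $n\leq -1$. Hence each symmetric power $(L[1])^{\odot r}$ also lives in non-negative degrees, and its degree-zero part is exactly $(L^1)^{\odot r}$, with no contribution whatsoever from $L^{\geq 2}$. So $\mathscr{C}_s(L)^0=\bigoplus_{r=1}^s (L^1)^{\odot r}$ is finite-dimensional outright, $\mathscr{C}_s(L)^{-1}=0$, and $H^0(\mathscr{C}_s(L))$ is simply a subspace of a finite-dimensional space. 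No subtle cancellation via $Q$ is needed; your ``main obstacle'' dissolves once the sign is corrected.

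One smaller point: in the minimal model you have $L^0=0$, so the gauge group $\exp(L^0\otimes\mathfrak{m}_A)$ is trivial and $\Def_L(A)$ is just the Maurer--Cartan set, with no quotient. The identification with $\Hom_{\ProArt}(R,A)$ then goes through the standard bijection between Maurer--Cartan elements in $L\otimes\mathfrak{m}_A$ and coalgebra maps $(\mathfrak{m}_A)^*\to H^0(\mathscr{C}(L))$ (this is the ``simple calculation'' the paper invokes from Manetti's notes), followed by dualization via Lemma~\ref{lemma:conilpotent-coalgebras-dual-pro-artin}. Speaking of gauge-equivalence classes here is harmless but unnecessary.
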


\begin{proof}
First, Theorem~\ref{theorem:L-infinity-quasi-isomorphism} states that
$H^0(\mathscr{C}(L))$ is invariant under $L_\infty$-quasi-iso\-mor\-phisms, and so is
$R$. The $L_\infty$ algebra structure on $L$ transfers, up to
$L_\infty$-quasi-isomorphism, to a $L_\infty$ algebra structure on its
cohomology (\cite[\S~10.3]{LodayValette}). Thus we can assume that $L$ has
$L^n=0$ for $n\leq 0$ and $L^1$ finite-dimensional. The deformation functor of
$L$ is then simply given by the solutions of the Maurer-Cartan equation
\begin{equation}
A\in\Art_\kk \longmapsto \left\{x\in L^1\otimes\mathfrak{m}_A\ \left|\ 0 =
  \sum_{r\geq 1}\frac{1}{r!}\, \ell_r(x\wedge\cdots\wedge x) \right. \right\}
\end{equation}
(this sum is finite since $\mathfrak{m}_A$ is nilpotent, and when $\ell_r=0$ for
$r\geq 3$ this is the usual Maurer-Cartan equation for DG Lie algebras), without
any quotient. But by a simple calculation (\cite[VIII.27]{ManettiLectures}) this
functor is the same as
\begin{equation}
A\in\Art_\kk \longmapsto \Hom((\mathfrak{m}_A)^*, H^0(\mathscr{C}(L))) .
\end{equation}
Assuming that $L^1$ is finite-dimensional, the canonical filtration of
$\mathscr{C}(L)$ is by finite-dimensional sub-coalgebras and similarly for
$H^0(\mathscr{C}(L))$. Thus one can dualize as in
Lemma~\ref{lemma:conilpotent-coalgebras-dual-pro-artin} and the above functor is
isomorphic to 
\begin{equation}
A\in\Art_\kk \longmapsto \Hom(H^0(\mathscr{C}(L))^*, \mathfrak{m}_A)
\end{equation}
which is also $\Hom(R, A)$ with the $\Hom$ in the
category of local algebras.
\end{proof}

\begin{remark}
When $L$ is a DG Lie algebra and is formal, the above theorem is a simple
calculation combined with the invariance of $\Def_L$ under
quasi-isomorphisms. In the general case however, and even when $L$ is a DG Lie
algebra, this is really a theorem in derived deformation theory and for us it
improves much the classical theory (compare with \cite{GoldmanMillsonKuranishi},
\cite[\S~4]{Manetti}).
\end{remark}

\section{\texorpdfstring{$L_\infty$}{L-infinity} algebra structure on the mapping cone}
\label{subsection:Linfinifty-mapping-cone}

After the pro-representability theorem, the second main point of our approach is
the article of Fiorenza-Manetti \cite{FiorenzaManetti} showing that the mapping
cone of a morphism between DG Lie algebras has a canonical $L_\infty$ algebra
structure and describing explicitly the deformation functor.

\begin{definition}
\label{definition:mapping-cone}
Let $\epsilon:L\rightarrow M$ be a morphism between DG vector spaces.
The \emph{mapping cone} of $\epsilon$ is the DG
vector space $\Cone(\epsilon)$ with
\begin{equation} \Cone(\epsilon)^n := L^{n+1} \oplus M^n
\end{equation} and differential
\begin{equation} d^n_{\Cone(\epsilon)}(x,y) := \big(-d^{n+1}_L(x),\
d^n_M(y)-\epsilon(x)\big) .
\end{equation} It is sometimes more natural to work with the
\emph{desuspended mapping cone} $\Cone(\epsilon)[-1]$, which has
\begin{equation} \Cone(\epsilon)[-1]^n = L^n \oplus M^{n-1}
\end{equation} and differential
\begin{equation} d^n_{\Cone(\epsilon)[-1]}(x,y) = \big(d^{n}_L(x),\
\epsilon(x)-d^{n-1}_M(y)\big) .
\end{equation}
If $\epsilon$ is a morphism between DG Lie algebras, then
on the desuspended mapping cone one defines a \emph{naive bracket} which is a
bilinear map of degree zero given by
\begin{equation}
\label{equation:naive-bracket} \big[(x, u), (y,v)\big] := \left(
[x,y],\ \frac{1}{2}\,[u, \epsilon(y)] +
\frac{(-1)^{|x|}}{2}\,[\epsilon(x), v] \right)
\end{equation} for $x,y\in L$, $u,v\in M$.
\end{definition}

The naive bracket is anti-symmetric and the differential is a derivation for the
bracket, but in general it doesn't satisfy the Jacobi identity.
Let us sum up all what we need directly in one theorem.

\begin{theorem}[{\cite[Main Thm.]{FiorenzaManetti}}]
\label{theorem:main-Fiorenza-Manetti}
Let $\epsilon:L\rightarrow M$ be a morphism between DG Lie algebras and let $C$
be its desuspended mapping cone. Then $C$ has a $L_\infty$ algebra structure,
with $\ell_1$ the usual differential of the cone and $\ell_2$ the naive
bracket. 

The higher brackets of $C$, given as the components $q_r$ of a codifferential
$Q$ on $\mathscr{C}(C)$, are given as follows (we write $x,y,\dots$ for elements
of $L$ and $u,v,\dots$ for elements of $M$).  If $r+k\geq 3$ and $k\neq 1$ then
\begin{equation}
q_{r+k}(u_1\odot \cdots \odot u_r \otimes x_1 \odot \dots \odot
x_k) = 0 
\end{equation}
and for $r\geq 2$
\begin{multline}
\label{equation:description-higher-brackets-long} q_{r+1}(u_1 \odot
\cdots \odot u_r \otimes x) =\\ -(-1)^{\sum_{i=1}^r |u_i|}
\,\frac{B_r}{r!} \sum_{\tau\in \mathfrak{S}(r)}
\epsilon(\tau,u_1,\dots,u_r)\,
[u_{\tau(1)},[u_{\tau(2)},\dots,[u_{\tau(r)}, \epsilon(x)]\dots ]]
\end{multline}
where the $B_n$ are the Bernouilli numbers.

This $L_\infty$ algebra structure is functorial from the category of morphisms
between DG Lie algebras to the category of $L_\infty$ algebras with their strong
morphisms.

The associated deformation functor on $\Art_\kk$ is isomorphic to the quotient
functor of the set of objects (for $A\in\Art_\kk$)
\begin{equation}
\label{equation:objects-deformation-functor-Manetti}
\left\{ (x,
  e^{\alpha})\in (L^1\otimes\mathfrak{m}_A)\times
  \exp(M^0\otimes\mathfrak{m}_A) \ \left |\ d(x)+\frac{1}{2}[x,x] = 0,\
    e^{\alpha}*\epsilon(x)=0 \right. \right\}
\end{equation} by the relation
\begin{multline}
\label{equation:morphisms-deformation-functor-Manetti}
(x,e^\alpha) \sim (y,e^\beta) \quad \Longleftrightarrow \\ \exists (\lambda, \mu)
\in (L^0\otimes\mathfrak{m}_A)\times (M^{-1}\otimes\mathfrak{m}_A) ,\quad
  e^{\lambda}.x=y,\ e^{\beta}=e^{d(\mu)}*e^{\alpha}*e^{-\epsilon(\lambda)}.
\end{multline}
\end{theorem}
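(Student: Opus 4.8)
The plan is to verify this statement by citing the Fiorenza--Manetti theorem \cite{FiorenzaManetti} for the core assertions and then carrying out the comparison of deformation functors by hand, since this is the one place where our conventions might differ from theirs. Concretely, the existence of the $L_\infty$ structure on $C$, the identification of $\ell_1$ with the cone differential and $\ell_2$ with the naive bracket \eqref{equation:naive-bracket}, the explicit higher brackets \eqref{equation:description-higher-brackets-long}, and the functoriality are all direct transcriptions of their main theorem; I would simply quote these and check that the sign conventions and the desuspension $C=\Cone(\epsilon)[-1]$ match Definition~\ref{definition:mapping-cone}. The only genuine work is to unwind the abstract description of $\Def_C$ (the Maurer--Cartan elements of $C\otimes\mathfrak{m}_A$ modulo gauge) into the concrete presentation \eqref{equation:objects-deformation-functor-Manetti}--\eqref{equation:morphisms-deformation-functor-Manetti}.

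First I would write down the Maurer--Cartan equation for $C\otimes\mathfrak{m}_A$. A degree-one element of $C=\Cone(\epsilon)[-1]$ is a pair $(x,u)$ with $x\in L^1$ and $u\in M^0$; the Maurer--Cartan equation $\sum_{r\geq 1}\frac{1}{r!}\ell_r((x,u)^{\wedge r})=0$ splits into two components. Using $\ell_1$, $\ell_2$ and the higher brackets \eqref{equation:description-higher-brackets-long}, the $L$-component gives exactly $d(x)+\frac{1}{2}[x,x]=0$, while the $M$-component, after resumming the series in $u$ against the Bernoulli numbers, is recognized as the expansion of $e^{u}*\epsilon(x)=0$ via the standard identity relating the Baker--Campbell--Hausdorff series to the generating function $z/(e^z-1)=\sum B_n z^n/n!$. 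Setting $e^{\alpha}:=e^{u}$ (so $\alpha\in M^0\otimes\mathfrak{m}_A$) then matches \eqref{equation:objects-deformation-functor-Manetti} on the nose. I would be careful here with the overall sign and the factor $(-1)^{\sum|u_i|}$, which on $\mathfrak{m}_A$-valued elements of $M^0$ contributes trivially but must be tracked symbolically to justify the BCH identification.

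Next I would identify the gauge relation. The gauge group acts through degree-zero elements of $C\otimes\mathfrak{m}_A$, and such an element is a pair $(\lambda,\mu)$ with $\lambda\in L^0$ and $\mu\in M^{-1}$. The $\lambda$-part acts on $x$ by the usual gauge action $e^{\lambda}.x=y$ coming from the DG Lie structure of $L$, which is the first equation of \eqref{equation:morphisms-deformation-functor-Manetti}. The effect on the $M$-component is where the cone differential $d(\mu)=\epsilon$-term and the $\ell_2$-coupling enter: exponentiating the infinitesimal gauge action and again resumming the Bernoulli series, the transformation of $e^{\alpha}$ becomes the three-fold BCH product $e^{\beta}=e^{d(\mu)}*e^{\alpha}*e^{-\epsilon(\lambda)}$, giving the second equation of \eqref{equation:morphisms-deformation-functor-Manetti}. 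The main obstacle, as in the previous step, is purely bookkeeping: assembling the homotopy that implements the gauge action of an $L_\infty$ algebra and matching its exponentiated form to the nested BCH product, with all Bernoulli coefficients and signs in place. Since all of this is carried out in \cite{FiorenzaManetti}, I would present the computation only to the extent needed to confirm that our desuspension and sign conventions reproduce their formulas, and cite their paper for the full verification that $Q\circ Q=0$ and that the resulting functor is well defined.
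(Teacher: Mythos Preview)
The paper gives no proof of this theorem: it is stated purely as a citation of \cite[Main Thm.]{FiorenzaManetti}, with all assertions---the $L_\infty$ structure, the explicit higher brackets, the functoriality, and the description of the deformation functor---taken directly from that reference. Your proposal is therefore already more than the paper does, and your instinct to simply cite Fiorenza--Manetti for the core claims is exactly right.

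Your additional plan to unwind $\Def_C$ into the explicit form \eqref{equation:objects-deformation-functor-Manetti}--\eqref{equation:morphisms-deformation-functor-Manetti} is sound in outline (the Bernoulli resummation into BCH is indeed how this goes in \cite{FiorenzaManetti}), but it is not something the present paper carries out or needs; the description of the deformation functor is itself part of the cited result. So while your sketch is correct, for the purposes of this paper a bare citation suffices.
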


And our main remark for this section is:

\begin{lemma}
\label{lemma:main-augmented-deformation-Manetti-Eyssidieux-Simpson}
Applied to an augmentation $\epsilon:L\rightarrow\mathfrak{g}$ to a Lie algebra
$\mathfrak{g}$, the above deformation functor associated to the cone of
$\epsilon$ introduced by Fiorenza-Manetti coincides with the augmented
deformation functor defined by Eyssidieux-Simpson.
\end{lemma}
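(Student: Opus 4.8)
The plan is to avoid constructing any comparison map and instead to identify the two functors on the nose, by specializing the explicit description of the Fiorenza--Manetti deformation functor in Theorem~\ref{theorem:main-Fiorenza-Manetti} to the case $M=\mathfrak{g}$, where $\mathfrak{g}$ is regarded as a DG Lie algebra concentrated in degree zero (hence with zero differential). The point is that the degree constraints force the two extra features of the cone functor---the supplementary condition defining the objects and the supplementary gauge direction in the equivalence relation---to degenerate exactly to what is already present in $\Def_{L,\epsilon}$. I would first record the numerical input coming from $M=\mathfrak{g}$: one has $M^{-1}=0$, $M^1=0$, and $d=0$ on $M$. Since $\epsilon:L\to\mathfrak{g}$ is degree-preserving and $\mathfrak{g}^1=0$, this already gives $\epsilon(x)=0$ for every $x\in L^1\otimes\mathfrak{m}_A$.

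Next I would compare the objects. The pairs in~\eqref{equation:objects-deformation-functor-Manetti} live in $(L^1\otimes\mathfrak{m}_A)\times\exp(M^0\otimes\mathfrak{m}_A)$, and $M^0=\mathfrak{g}$, so the ambient set is literally that of~\eqref{equation:objects-deformation-functor-augmented}; the Maurer--Cartan condition $d(x)+\frac12[x,x]=0$ is identical in both. It then remains to see that the extra condition $e^{\alpha}*\epsilon(x)=0$ is vacuous. Here $*$ is the gauge action of $\exp(M^0\otimes\mathfrak{m}_A)$ on Maurer--Cartan elements of $M$, which has the form $e^{\alpha}*w=e^{\ad_\alpha}w-\tfrac{e^{\ad_\alpha}-1}{\ad_\alpha}(d\alpha)$. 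Both inputs vanish in our situation: $w=\epsilon(x)=0$ because $M^1=0$, and $d\alpha=0$ because $d=0$ on $\mathfrak{g}$. Hence $e^{\alpha}*\epsilon(x)=0$ holds automatically, and the two sets of objects coincide.

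Finally I would compare the equivalence relations. In~\eqref{equation:morphisms-deformation-functor-Manetti} the parameter $\mu$ lies in $M^{-1}\otimes\mathfrak{m}_A=0$, so $\mu=0$ and $e^{d(\mu)}=1$. The relation therefore collapses to the existence of $\lambda\in L^0\otimes\mathfrak{m}_A$ with $e^{\lambda}.x=y$ and $e^{\beta}=e^{\alpha}*e^{-\epsilon(\lambda)}$, which is exactly the orbit relation for the action~\eqref{equation:morphisms-deformation-functor-augmented} defining $\Def_{L,\epsilon}$. The quotients thus agree term by term, so the two functors are literally equal, naturally in $A$.

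The argument is essentially bookkeeping, and the single point demanding care is the reading of $e^{\alpha}*\epsilon(x)$ as a gauge action together with the verification that it is vacuous; I expect this to be the main, though mild, obstacle. Everything else follows at once from the vanishings $M^{-1}=M^1=0$ and $d|_{\mathfrak{g}}=0$.
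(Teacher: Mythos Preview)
Your proposal is correct and follows exactly the same approach as the paper's proof: specialize the Fiorenza--Manetti description to $M=\mathfrak{g}$ concentrated in degree zero, so that $\epsilon(x)=0$ on $L^1$, the extra object condition $e^{\alpha}*\epsilon(x)=0$ becomes vacuous, and $\mu=0$ forces $e^{d(\mu)}=1$ in the equivalence relation. The only difference is that you spell out the gauge-action formula explicitly to justify the vacuity of the object condition, whereas the paper just asserts the resulting identifications in one sentence.
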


\begin{proof}
Apply the theorem with $M$ concentrated in degree zero: $\epsilon(x)=0$ and also
$\mu=0$ so $e^{d(\mu)}$ is the identity of
$\exp(L^0\otimes\mathfrak{m}_A)$. Then we see immediately that the set of
objects~\eqref{equation:objects-deformation-functor-augmented}
and~\eqref{equation:objects-deformation-functor-Manetti} are the same and the
equivalence relations
in~\eqref{equation:morphisms-deformation-functor-augmented}
and~\eqref{equation:morphisms-deformation-functor-Manetti} are also the same.
\end{proof}

\section{Mixed Hodge structures}
\label{section:MHS}

We now turn to Hodge theory and we adopt all the classical notations and
conventions from Deligne \cite{DeligneII} concerning filtrations and Hodge
structures. In particular we always denote by $W_\bullet$ an increasing
filtration and by $F^\bullet$ a decreasing filtration. They are all indexed by
$\ZZ$. Filtrations of vector spaces are assumed to be finite and filtrations of
DG vector spaces are assumed to be biregular (i.e.\ induce a finite filtration
on each component) unless we state explicitly that we work with inductive or
projective limits of such objects. We will work with a fixed field
$\kk\subset\RR$.

First recall the classical definitions of pure and mixed Hodge structures.

\begin{definition}
\label{definition:Hodge-structure}
A (pure) \emph{Hodge structure} of weight $k$ over the field $\kk$ is
the data of a finite-dimensional vector space $K$ over $\kk$ and a bigrading
over $\CC$
\begin{equation}
K_\CC := K \otimes_{\kk} \CC = \bigoplus_{p+q=k} K^{p,q}
\end{equation}
with the complex conjugation exchanging $K^{p,q}$ and $K^{q,p}$.
The \emph{Hodge filtration} is then the decreasing filtration $F^\bullet$ of
$K_\CC$ defined by
\begin{equation}
F^p K_\CC := \bigoplus_{p\geq p'} K^{p',q} .
\end{equation}
\end{definition}

\begin{definition}
\label{definition:mixed-Hodge-structure}
A \emph{mixed Hodge structure} over $\kk$ is the data of a
finite-dimensional vector space $K$ over $\kk$ with an increasing
filtration $W_{\bullet}$ called the \emph{weight filtration} and a
decreasing filtration $F^\bullet$ on $K_\CC$ such that for each $k\in\ZZ$ the graded piece
\begin{equation}
\Gr^{W}_k (K) := W_k(K) / W_{k-1}(K)
\end{equation}
with the induced filtration $F$ over $\CC$ forms a pure Hodge structure of
weight $k$.
\end{definition}

Pure and mixed Hodge structures are abelian categories equipped with a tensor
product and internal homs. In particular one can talk about various kinds of algebras (resp.\ of coalgebras) carrying a
compatible mixed Hodge structure: this is simply the data of a mixed Hodge
structure $K$ with a multiplication map $K\otimes K \rightarrow K$ (resp.\
a comultiplication $K\rightarrow K\otimes K$) that is a morphism of mixed Hodge
structures and satisfies the corresponding axioms of algebra or coalgebra.

We will also have to deal with inductive and projective limits of these: by
definition a pro-Artin algebra with a mixed Hodge structure is the data of a
pro-Artin algebra $R$ with a mixed Hodge structure on each
$R/(\mathfrak{m}_R)^n$ (which is finite-dimensional) such that the canonical morphisms
\begin{equation}
R/(\mathfrak{m}_R)^{n+1} \longrightarrow R/(\mathfrak{m}_R)^n
\end{equation}
are morphisms of mixed Hodge structures. Dually a conilpotent coalgebra with a
mixed Hodge structure is a conilpotent coalgebra $X$ with a mixed Hodge
structure on each term $X_n$ of the canonical filtration, compatible with the
inclusion $X_n\subset X_{n+1}$.

A mixed Hodge structure on $K$ defines (in several ways) a bigrading of $K_\CC$.

\begin{definition}[{\cite[3.4]{PetersSteenbrink}}]
\label{definition:Deligne-splitting}
Let $K$ be a mixed Hodge structure. Let $K^{p,q}$ be the
$(p,q)$-component of $\Gr^W_{p+q}(K)$. Define the subspace of $K_\CC$
\begin{equation}
I^{p,q} := F^p \cap W_{p+q} \cap \left( \overline{F}^q \cap W_{p+q} \cap \sum_{j\geq 2}
  \big(\overline{F}^{q-j+1} \cap W_{p+q-j} \big) \right) .
\end{equation}
Then this defines a bigrading
\begin{equation}
K_\CC = \bigoplus_{p,q} I^{p,q}
\end{equation}
such that the canonical projection $K_\CC \rightarrow \Gr^W_{p+q}(K_\CC)$
induces an isomorphism
\begin{equation}
I^{p,q} \simeq K^{p,q} .
\end{equation}
We call it the \emph{Deligne splitting}. It is functorial and
compatible with duals and tensor products.
\end{definition}

Actually we will use only the grading by weight $K_\CC=\bigoplus_k I^k$ with
\begin{equation}
I^k=\bigoplus_{p+q=k} I^{p,q}.
\end{equation}

\section{Mixed Hodge complexes}
\label{section:MHC}
For us the basic tool for constructing mixed Hodge structures will be Deligne's
notion of mixed Hodge complex, introduced in \cite[\S~8]{DeligneIII}. This
consists of several complexes related by filtered quasi-isomorphisms and axioms
implying that all cohomology groups carry mixed Hodge structures.

\begin{definition}
If $(K,W)$ and $(L,W)$ are filtered DG vector spaces, a \emph{filtered
quasi-isomorphism} is a morphism $f:K\rightarrow L$ of filtered DG vector spaces
that induces a quasi-isomorphism
\begin{equation}
\Gr^W_k(K) \stackrel{\approx}{\longrightarrow} \Gr^W_k(L)
\end{equation}
for all $k$. Since the filtrations are biregular, $f$ is in particular a
quasi-isomorphism.

If $(K,W,F)$ and $(L,W,F)$ are bifiltered DG vector spaces (equipped with two
filtrations, $W$ increasing and $F$ decreasing) a \emph{bifiltered
quasi-isomorphism}  is a morphism of bifiltered DG vector spaces $f:K\rightarrow
L$ that induces a quasi-isomorphism
\begin{equation}
\label{equation:bifiltered-quasi-ismorphism}
\Gr^W_k \Gr_F^p(K) \stackrel{\approx}{\longrightarrow} \Gr^W_k \Gr_F^p(L)
\end{equation}
for all $k,p$. By the Zassenhauss lemma (\cite[1.2.1]{DeligneII}) the two graded
pieces $\Gr^W_k \Gr_F^p$ and $\Gr_F^p \Gr_W^k$ are canonically isomorphic and
one can invert them in~\eqref{equation:bifiltered-quasi-ismorphism}. Again, $f$
is in particular a quasi-isomorphism.
\end{definition}

\begin{definition}[{\cite[8.1.5]{DeligneIII}}]
\label{definition:mixed-Hodge-complex}
A \emph{mixed Hodge complex} over $\kk$ is the data of a filtered bounded-below DG vector
space $(K_\kk,W_\bullet)$ over $\kk$ and a bifiltered bounded-below DG vector space
$(K_\CC,W_\bullet,F^\bullet)$, together with a chain of filtered quasi-iso\-mor\-phisms
\begin{equation}
(K_\kk, W)\otimes\CC \stackrel{\approx}{\longleftrightarrow} (K_\CC, W)
\end{equation}
satisfying the following axioms:
\begin{enumerate}
\item
\label{MHC-axiom-1}
For all $n\in\ZZ$, $H^n(K)$ is finite-dimensional.
\item 
\label{MHC-axiom-2}
For all $k\in\ZZ$, the differential of $\Gr_k^W (K_\CC)$ is
strictly compatible with the filtration $F$.
\item
\label{MHC-axiom-3}
For all $n\in\ZZ$ and all $k\in\ZZ$, the filtration $F$ induced on
$H^n(\Gr^W_k (K_{\CC}))$ and the form $H^n(\Gr^W_k (K_\kk))$ over $\kk$ are part
of a pure Hodge structure of weight $k+n$ over $\kk$ on $H^n(\Gr^W_k (K))$.
\end{enumerate}
\end{definition}

\begin{remark}
In the following, we will work with mixed Hodge complexes with a fixed chain of
quasi-isomorphisms from $K_\kk$ to $K_\CC$, via intermediate components over
$\CC$ that we denote by $K_i$ ($i$ is an index belonging to some indexing
category) carrying a filtration $W$. The morphisms $K_i\rightarrow K_j$ are
called the \emph{comparison quasi-isomorphisms}.  See
\cite{CiriciGuillenComplexes} and \cite{Cirici} for this point of view.
\end{remark}

\begin{theorem}[Deligne]
If $K$ is a mixed Hodge complex, then on each $H^n(K)$ the induced filtrations
$F$ and $W[n]$ (with $W[n]_k:=W_{k-n}$) define a mixed Hodge structure.
\end{theorem}

\begin{definition}
A \emph{morphism} $f$ between mixed Hodge complexes $K,L$ is the data of a
collection of morphisms $f_i:K_i\rightarrow L_i$ of filtered DG vector spaces,
and $f_\CC:K_\CC\rightarrow L_\CC$ of bifiltered DG vector spaces, commuting
with the comparison quasi-isomorphisms. It is said to be a
\emph{quasi-isomorphism} if $f_i$ is a filtered quasi-isomorphism and $f_\CC$ is
a bifiltered quasi-isomorphism.
\end{definition}

Let us describe three useful constructions on mixed Hodge complexes.

\begin{proposition}[{\cite[8.1.24]{DeligneIII}, \cite[3.20]{PetersSteenbrink}}]
\label{proposition:tensor-product-mixed-Hodge-complex}
The \emph{tensor product} of two mixed Hodge complexes $K,L$, defined
level-wise, is again a mixed Hodge complex. Its cohomology computes (via the
Künneth formula) the tensor product of the mixed Hodge structures on cohomology.
\end{proposition}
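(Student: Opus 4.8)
The plan is to equip $K\otimes L$ with the obvious tensor structure and then reduce everything to a statement about pure Hodge complexes by passing to the associated graded for $W$. First I would fix the data. On the rational parts one puts the convolution weight filtration
\begin{equation}
W_m(K_\kk\otimes L_\kk) := \sum_{i+j=m} W_i(K_\kk)\otimes W_j(L_\kk),
\end{equation}
and likewise on $K_\CC\otimes L_\CC$ one takes the tensor filtrations for both $W$ and $F$; the chain of comparison quasi-isomorphisms is the termwise tensor product of those of $K$ and $L$, and the differential is the usual Leibniz differential. That these tensor maps are filtered (resp.\ bifiltered) quasi-isomorphisms, and that axiom~\ref{MHC-axiom-1} holds, both follow at once from the Künneth formula over a field together with the identification
\begin{equation}
\Gr^W_m(K\otimes L)=\bigoplus_{i+j=m}\Gr^W_i(K)\otimes\Gr^W_j(L).
\end{equation}

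Next I would reduce axioms~\ref{MHC-axiom-2} and~\ref{MHC-axiom-3} to the pure case. Unwinding the definitions, these two axioms for a mixed Hodge complex $K$ are exactly the assertion that for each $k$ the triple $(\Gr^W_k K_\kk,\ \Gr^W_k K_\CC,\ F)$, with its comparison maps, is a \emph{pure Hodge complex of weight $k$}: the differential is strictly compatible with $F$, and $H^n$ of it is a pure Hodge structure of weight $k+n$. Using the displayed decomposition of $\Gr^W_m(K\otimes L)$, it therefore suffices to prove that the tensor product of a pure Hodge complex of weight $a$ with one of weight $b$ is a pure Hodge complex of weight $a+b$: each summand $\Gr^W_i K\otimes\Gr^W_j L$ will then have weight $i+j=m$, and a finite direct sum of pure Hodge complexes of a common weight is again one.

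The heart is thus the pure case, where two things must be checked for $A\otimes B$ with $A,B$ pure of weights $a,b$. Purity of cohomology is the easy half: by Künneth $H^n(A\otimes B)=\bigoplus_{p+q=n}H^p(A)\otimes H^q(B)$, and each summand is the tensor product in the category of Hodge structures of something pure of weight $a+p$ with something pure of weight $b+q$, hence pure of weight $(a+b)+n$, so the direct sum is pure of that weight. The main obstacle is strictness: I must show the Leibniz differential on $(A_\CC\otimes B_\CC,F)$ is strictly compatible with $F$, and that the filtration $F$ produced by Künneth on $H^n(A\otimes B)$ agrees with the one coming from the tensor product of the Hodge structures.

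I would handle strictness through the dimension criterion for degeneration at $E_1$: for a biregularly filtered complex, strictness of $d$ with respect to $F$ is equivalent to $\sum_p \dim H^n(\Gr^p_F)=\dim H^n$ for all $n$. Applying Künneth to the graded pieces
\begin{equation}
\Gr^p_F(A_\CC\otimes B_\CC)=\bigoplus_{p_1+p_2=p}\Gr^{p_1}_F A_\CC\otimes\Gr^{p_2}_F B_\CC
\end{equation}
and summing over $p$, the $E_1$-degeneration for $A$ and for $B$ separately multiplies out to give $\sum_p\dim H^n(\Gr^p_F(A\otimes B))=\dim H^n(A\otimes B)$, which is precisely $E_1$-degeneration, hence strictness, for the tensor product. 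Strictness in turn forces the Künneth isomorphism to be strictly filtered, so the two filtrations $F$ on $H^n(A\otimes B)$ coincide; thus the cohomology is computed as the tensor product of the mixed Hodge structures on cohomology, which is the final assertion of the proposition.
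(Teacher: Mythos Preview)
The paper does not supply its own proof of this proposition; it is quoted with references to \cite[8.1.24]{DeligneIII} and \cite[3.20]{PetersSteenbrink} and used as a black box. Your argument is the standard one found in those references and is correct: reduce to the pure case via the decomposition $\Gr^W_m(K\otimes L)=\bigoplus_{i+j=m}\Gr^W_i K\otimes\Gr^W_j L$, use K\"unneth for the weight of the cohomology, and use the dimension criterion for $E_1$-degeneration to obtain strictness for $F$ on the tensor product.

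One small point worth making explicit in your last paragraph: to conclude that the K\"unneth isomorphism identifies the two candidate Hodge filtrations on $H^n(A\otimes B)$, you should first observe that the K\"unneth map is a \emph{filtered} morphism for $F$ (a class in $F^r H^p(A)$ is by definition represented by a cocycle in $F^r A^p$, and similarly for $B$, so their product lies in $F^{r+s}(A\otimes B)$); then your dimension count shows it induces isomorphisms on all $\Gr^p_F$, and a filtered morphism between finitely filtered objects that is an isomorphism on each graded piece is a filtered isomorphism. This is what you intend by ``strictness forces the K\"unneth isomorphism to be strictly filtered,'' but the filtered-morphism step deserves to be said.
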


Similarly, the exterior and symmetric products of mixed Hodge complexes are
again mixed Hodge complexes, as well as the whole tensor, exterior and symmetric
algebras.

\begin{definition}
We call \emph{mixed Hodge diagram} a mixed Hodge complex $K$ having an
additional structure of algebra (commutative algebra, or Lie algebra, or
coalgebra).
\end{definition}

Equivalently this means that all components $K_i$ are algebras and the
comparison quasi-isomorphisms are morphisms of algebras.

\begin{remark}
Algebraically, this is just a little bit more than a mixed Hodge complex, for
example this is what Hain calls \emph{multiplicative mixed Hodge complex}
(\cite[\S~3.1]{Hain}). Geometrically however, the construction of a mixed Hodge
diagram of algebras whose cohomology computes the whole cohomology algebra of a
given variety together with its mixed Hodge structure is much more
difficult. This is the object of section~\ref{section:geo-non-compact}.
\end{remark}

\begin{proposition}
If $K$ is a mixed Hodge complex and $r\in\ZZ$, then the \emph{shift} $K[r]$,
which has in each component $K_i[r]^n := K_i^{n+r}$, equipped with the filtrations
$W[r]$ and $F$, is again a mixed Hodge complex.
\end{proposition}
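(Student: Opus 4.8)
The plan is to verify the three defining axioms of a mixed Hodge complex (Definition~\ref{definition:mixed-Hodge-complex}) directly for the shifted object $K[r]$, using that they already hold for $K$. First I would observe that shifting is an exact operation on (bi)filtered DG vector spaces: each component $K_i[r]$ carries the differential $(-1)^r d_{K_i}$ and the filtrations $W[r]$ and $F$ as prescribed, and the comparison quasi-isomorphisms of $K$ shift to comparison maps for $K[r]$ that remain compatible with the filtrations (a filtered map stays filtered after shifting, since the shift does not move the filtration index of $F$, and it reindexes $W$ uniformly). So the underlying diagram of filtered and bifiltered DG vector spaces with a chain of comparison quasi-isomorphisms is again of the required shape.

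Next I would check the three axioms. The cohomology satisfies $H^n(K[r]) = H^{n+r}(K)$, so the finite-dimensionality in axiom~\eqref{MHC-axiom-1} is immediate. For axiom~\eqref{MHC-axiom-2}, the key point is that $\Gr^{W[r]}_k(K_\CC[r]) = \Gr^W_{k-r}(K_\CC)[r]$, and strictness of a differential with respect to $F$ is unaffected by shifting the complex (the filtration $F$ itself is not reindexed and a sign change $(-1)^r$ on the differential does not affect strictness), so this follows from the corresponding axiom for $K$.

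The only genuinely bookkeeping-heavy point is axiom~\eqref{MHC-axiom-3}, where the weight indices must match up correctly. I would compute
\begin{equation}
H^n\big(\Gr^{W[r]}_k(K[r])\big) = H^n\big(\Gr^W_{k-r}(K)[r]\big) = H^{n+r}\big(\Gr^W_{k-r}(K)\big).
\end{equation}
By axiom~\eqref{MHC-axiom-3} applied to $K$, this carries a pure Hodge structure of weight $(k-r)+(n+r) = k+n$, which is exactly the weight demanded for $K[r]$ at bidegree $(n,k)$. The induced $F$-filtration agrees because $F$ is left untouched by the shift, and the rational (or $\kk$-)structure is transported identically. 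Hence all three axioms hold and $K[r]$ is a mixed Hodge complex.

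The main obstacle, such as it is, lies entirely in this last index-matching: one must be careful that the weight shift $W[r]_k = W_{k-r}$ and the cohomological shift $H^n(K[r]) = H^{n+r}(K)$ conspire so that the weight $k+n$ of the pure Hodge structure comes out invariant, rather than picking up an unwanted factor of $r$. Once this is confirmed the rest is routine, and I expect the proof to be short, essentially a verification that the two reindexings cancel. It is worth noting that this compatibility is precisely what makes the shift convention $W[n]_k := W_{k-n}$ (already used in Deligne's theorem above for $H^n(K)$) the correct one.
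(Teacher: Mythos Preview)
Your proposal is correct; the paper itself states this proposition without proof, treating it as a routine verification, so your argument in fact supplies the details the paper omits. The index-matching you highlight, $(k-r)+(n+r)=k+n$, is exactly the point, and your check of the comparison quasi-isomorphisms and of the filtration identities $\Gr^{W[r]}_k = \Gr^W_{k-r}$ is accurate.
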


This is the natural way of shifting a complex together with its mixed Hodge
structure on cohomology. We will use it mainly for $r=1$ or $r=-1$.

\begin{proposition}[{\cite[3.22]{PetersSteenbrink}}]
\label{proposition:mapping-cone-mixed-Hode-complex}
The \emph{mapping cone} of a morphism $f:K\rightarrow L$ between mixed Hodge
complexes  is again a mixed Hodge complex, with filtration $W$ in the component
$i$ given by
\begin{equation}
W_k \Cone(f)_i^n := W_{k-1} K_i^{n+1} \oplus W_k L_i^n
\end{equation}
and in the component over $\CC$ carrying also the filtration $F$
\begin{equation}
F^p \Cone(f)_\CC^n := F^p K_\CC^n \oplus F^p L_\CC^n .
\end{equation}
Combined with the preceding construction, the \emph{desuspended mapping cone}
is a mixed Hodge complex with
\begin{equation}
W_k (\Cone(f)[-1])_i^n := W_{k} K_i^{n} \oplus W_{k+1} L_i^{n-1}
\end{equation}
and
\begin{equation}
F^p (\Cone(f)[-1])_\CC^n := F^p K_\CC^{n} \oplus F^p L_\CC^{n-1} .
\end{equation}
The long exact sequence
\begin{equation}
\cdots \longrightarrow H^n(K) \longrightarrow H^n(L)
\longrightarrow H^{n}(\Cone(f)) \longrightarrow H^{n+1}(K)
\longrightarrow H^{n+1}(L) \longrightarrow \cdots
\end{equation}
(where one can replace $H^n(\Cone(f))$ by $H^{n+1}(\Cone(f)[-1])$) is a
long exact sequence of mixed Hodge structures.
\end{proposition}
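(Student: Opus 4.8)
The plan is to reduce everything to the short exact sequence of (bi)filtered DG vector spaces
\[
0 \longrightarrow L \longrightarrow \Cone(f) \longrightarrow K[1] \longrightarrow 0,
\]
which holds in every component $i$ and over $\CC$, and is compatible with the comparison quasi-isomorphisms. Because the filtrations on the cone are direct-sum filtrations, the inclusion and the projection are strictly compatible with $W$ and $F$; in particular the sequence stays exact after applying $\Gr^W_k$ and $\Gr^W_k\Gr_F^p$. Since the shift proposition already gives that $K[1]$ and $L$ are mixed Hodge complexes, the whole argument consists in transporting the three axioms across this extension.

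First I would record the key computation at the associated-graded level. From $W_k\Cone(f)_i^n=W_{k-1}K_i^{n+1}\oplus W_kL_i^n$ we get $\Gr^W_k\Cone(f)_i^n=\Gr^W_{k-1}K_i^{n+1}\oplus\Gr^W_kL_i^n$. In the cone differential $d(x,y)=(-d_Kx,\,d_Ly-f(x))$ the off-diagonal term sends $W_{k-1}K$ into $W_{k-1}L\subseteq W_kL$, hence vanishes in $\Gr^W_k$. Therefore
\[
\Gr^W_k\Cone(f)_i\;=\;\big(\Gr^W_{k-1}K_i\big)[1]\ \oplus\ \Gr^W_kL_i
\]
\emph{as a direct sum of complexes}, over $\kk$ and over $\CC$ (the filtration $F$ being unaffected by the weight shift). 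This splitting is exactly what the weight shift by one in the definition of $W$ buys us, and it makes the remaining verifications immediate.

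From here axiom~\eqref{MHC-axiom-2} follows because a direct sum of complexes whose differentials are $F$-strict (here $\Gr^W_{k-1}K_\CC$ and $\Gr^W_kL_\CC$, by that axiom for $K$ and $L$) is again $F$-strict. For axiom~\eqref{MHC-axiom-3}, the splitting gives
\[
H^n\big(\Gr^W_k\Cone(f)\big)\;=\;H^{n+1}\big(\Gr^W_{k-1}K\big)\ \oplus\ H^n\big(\Gr^W_kL\big),
\]
and both summands carry pure Hodge structures, of weights $(k-1)+(n+1)=k+n$ and $k+n$ respectively; their sum is thus pure of weight $k+n$, as required. Axiom~\eqref{MHC-axiom-1}, as well as the fact that the comparison maps remain (bi)filtered quasi-isomorphisms on the cones, both follow from the five lemma applied to the long exact cohomology sequences of the short exact sequence above, using the corresponding properties of $K$ and $L$. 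The desuspended cone is then obtained by applying the shift proposition with $r=-1$; one checks directly that $W_k(\Cone(f)[-1])=W_{k+1}\Cone(f)$ with $F$ unchanged reproduces the stated formulas.

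Finally, since $0\to L\to\Cone(f)\to K[1]\to 0$ is a short exact sequence of mixed Hodge complexes that is strict for $W$ and $F$, passing to cohomology yields a long exact sequence in the abelian category of mixed Hodge structures: the maps and the connecting homomorphism are all morphisms of mixed Hodge structures, strictness guaranteeing exactness in that category. Reindexing via $H^n(\Cone(f))=H^{n+1}(\Cone(f)[-1])$ gives the stated sequence. The only genuinely delicate point is the splitting of $\Gr^W_k\Cone(f)$ into a direct sum of complexes; once that is established, everything else is a routine transport along the extension.
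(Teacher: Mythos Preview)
The paper does not actually supply a proof of this proposition: it is stated with a citation to \cite[3.22]{PetersSteenbrink} and then used as a black box. So there is no ``paper's own proof'' to compare against. That said, your argument is correct and is essentially the standard one that Peters--Steenbrink give: the crucial observation is that the weight shift on the $K$-summand forces the off-diagonal term $f$ to vanish on $\Gr^W_k$, so $\Gr^W_k\Cone(f)$ splits as a direct sum of complexes $(\Gr^W_{k-1}K)[1]\oplus\Gr^W_kL$, from which all three axioms and the filtered-quasi-isomorphism property of the comparison maps follow at once. One small remark: once you have this splitting, the filtered quasi-isomorphism property of the comparison maps is even more immediate than via the five lemma --- it is just the direct sum of the quasi-isomorphisms coming from the hypotheses on $K$ and $L$ separately; the five lemma is only really needed for axiom~\eqref{MHC-axiom-1} on the ungraded cohomology. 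Otherwise your write-up is clean and complete.
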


\section{Mixed Hodge diagrams of \texorpdfstring{$L_\infty$}{L-infinity} algebras}
\label{section:MHD-L-infinity}

In this section we show the compatibility of the construction of
Fiorenza-Manetti with mixed Hodge complexes.  The main object of study that we
introduce will be the following.

\begin{definition}
\label{definition:MHD-L-infinity}
A \emph{mixed Hodge diagram of $L_\infty$ algebras} is the data of a mixed Hodge
complex $L$ where all components $L_i$ are $L_\infty$ algebras and such that the
operations $\ell_r$ in $r$ variables of degree $2-r$ ($r\geq 1$) are compatible
with the filtrations, in the sense that
\begin{equation}
\label{equation:MHD-L-infinity-W}
 \ell_r \left((W_{k_1} L_i^{n_1}) \wedge \cdots \wedge
  (W_{k_r} L_i^{n_r}) \right) \subset W_{k_1 + \cdots + k_r}
L_i^{n_1+\cdots+n_r+2-r}
\end{equation}
and
\begin{equation}
\ell_r \left((F^{p_1} L_\CC^{n_1}) \wedge \cdots \wedge 
  (F^{p_r} L_\CC^{n_r}) \right) \subset F^{p_1 + \cdots + p_r}
L_\CC^{n_1+\cdots+n_r+2-r} .
\end{equation}
Furthermore the comparison quasi-isomorphisms are required to be strong
morphisms of $L_\infty$ algebras. If in each component $\ell_r=0$ for $r\geq 3$
this is simply a \emph{mixed Hodge diagram of Lie algebras}. A \emph{morphism}
between mixed Hodge diagrams of $L_\infty$ algebras $L,M$ is given by a morphism
of mixed Hodge complexes commuting with the operations $\ell_r$.
\end{definition}

At the level of cohomology, this behaves exactly as one would expect from mixed
Hodge diagrams of Lie algebras.

\begin{proposition}
If $L$ is a mixed Hodge diagram of $L_\infty$ algebras (a fortiori, of Lie
algebras) then on cohomology the induced Lie bracket is a morphism of mixed
Hodge structures.
\end{proposition}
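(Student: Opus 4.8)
The plan is to reduce the statement to the already-established fact that the operations $\ell_r$ are filtered maps and that cohomology of a mixed Hodge complex carries a mixed Hodge structure. The Lie bracket induced on cohomology is $[-,-] = H^\bullet(\ell_2)$, so I would argue that the map on cohomology induced by $\ell_2$ respects both filtrations $W$ and $F$ up to the expected shifts, and invoke strictness to conclude it is a morphism of mixed Hodge structures.

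First I would recall that by Definition~\ref{definition:MHD-L-infinity} the bracket $\ell_2$ on each component $L_i$ satisfies $\ell_2(W_{k_1}L_i^{n_1}\wedge W_{k_2}L_i^{n_2})\subset W_{k_1+k_2}L_i^{n_1+n_2}$ (the case $r=2$ of~\eqref{equation:MHD-L-infinity-W}, where the degree shift $2-r$ vanishes) and the analogous inclusion for $F$ over $\CC$. This says precisely that $\ell_2$ is a morphism of filtered (resp.\ bifiltered) DG vector spaces $L\otimes L\to L$, where $L\otimes L$ carries the tensor-product filtrations. Since $\ell_2$ is antisymmetric of degree zero, it factors through $L\wedge L$, and by Proposition~\ref{proposition:tensor-product-mixed-Hodge-complex} and the remark following it, the exterior product $L\wedge L$ is again a mixed Hodge complex whose cohomology computes $H^\bullet(L)\wedge H^\bullet(L)$ with its induced mixed Hodge structure via the Künneth isomorphism.

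Next I would observe that $\ell_2$ is then a morphism of mixed Hodge complexes $L\wedge L\to L$: it commutes with the comparison quasi-isomorphisms because these are required to be strong $L_\infty$-morphisms, hence commute with all the $\ell_r$ and in particular with $\ell_2$. Applying Deligne's theorem that $H^n$ of a mixed Hodge complex is a mixed Hodge structure, together with the functoriality of this construction, the induced map $H^\bullet(\ell_2):H^\bullet(L\wedge L)\to H^\bullet(L)$ is a morphism of mixed Hodge structures. Composing with the Künneth isomorphism identifies this with the induced bracket $H^\bullet(L)\wedge H^\bullet(L)\to H^\bullet(L)$, which is therefore a morphism of mixed Hodge structures, as claimed.

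The only subtlety I anticipate is bookkeeping with the weight shift $W[n]$ that appears when passing to $H^n$: the bracket lands in $H^{n_1+n_2}$, and one must check that the weight indices match under the convention $W[n]_k=W_{k-n}$ so that weights genuinely add. This is a routine verification once one tracks that the shift $2-r$ is zero for $r=2$; the key point is that no nontrivial degree shift occurs for the bracket, so the weight additivity built into~\eqref{equation:MHD-L-infinity-W} is exactly the one needed for a morphism of mixed Hodge structures. I do not expect any genuine obstacle, since all the hard analytic input has already been absorbed into the mixed Hodge complex formalism; the content of the proposition is that the filtered compatibility of $\ell_2$, combined with strictness, upgrades automatically to a statement about mixed Hodge structures on cohomology.
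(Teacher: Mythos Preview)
Your proposal is correct and arrives at the same conclusion, but the route is more conceptual than the paper's. The paper argues directly at the level of cohomology classes: for $F$ it simply passes the inclusion $\ell_2(F^p\wedge F^q)\subset F^{p+q}$ to cohomology, and for $W$ it takes representatives $u\in W_{k-n}L_i^n$, $v\in W_{\ell-m}L_i^m$ of classes in $W[n]_k H^n$, $W[m]_\ell H^m$, computes $\ell_2(u\wedge v)\in W_{(k-n)+(\ell-m)}L_i^{n+m}$, and reads off that the bracket lands in $W[n+m]_{k+\ell}H^{n+m}$. This is exactly the ``routine verification'' you flagged as the only subtlety; the paper just does it by hand rather than invoking abstract machinery.

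Your approach instead packages $\ell_2$ as a morphism of mixed Hodge complexes $L\wedge L\to L$ (using Proposition~\ref{proposition:tensor-product-mixed-Hodge-complex} for the source and the fact that comparison maps are strong $L_\infty$-morphisms), then appeals once to the functoriality of Deligne's theorem and to K\"unneth. This is cleaner and hides the $W[n]$ bookkeeping inside the general theory; the paper's version has the advantage of being completely self-contained and making the weight shift visible. One small remark: you do not really need to ``invoke strictness'' separately---once $\ell_2$ is a morphism of mixed Hodge complexes, the induced map on cohomology is automatically a morphism of mixed Hodge structures by functoriality; strictness is already built into that statement.
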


\begin{proof}
By passing to cohomology we forget all the operations $\ell_r$ for
$r\neq 2$ and $\ell_2$ becomes a Lie bracket $[-,-]$. The statement
that the Lie bracket respects $F$ is clear because the condition
\begin{equation}
\ell_2 \left((F^p L_\CC^n) \wedge (F^q L_\CC)^m \right) \subset F^{p+q} L_\CC^{n+m}
\end{equation}
directly induces on cohomology
\begin{equation}
\left[F^p H^n(L_\CC), F^q H^m(L_\CC) \right] \subset F^{p+q}
H^{n+m}(L_\CC) .
\end{equation}
For $W$, in any of the components $L_i$, one has to be more
careful. Take cohomology classes
\begin{equation}
[u] \in W[n]_k H^n(L_i), \quad [v]\in W[m]_\ell H^m(L_i) .
\end{equation}
Then $[u]$ comes from an element $u\in W_{k-n} L_i^n$, and $[v]$ comes
from $v\in W_{\ell-m} L_i^m$. So
\begin{equation}
\ell_2(u\wedge v) \in W_{(k-n)+(\ell-m)} L_i^{n+m} 
\end{equation}
and this corresponds to
\begin{equation}
[[u], [v]] \in W[n+m]_{k+\ell} H^{n+m}(L_i) .
\end{equation}
This proves that the Lie bracket is a morphism of mixed Hodge
structures.
\end{proof}

\begin{definition}
An \emph{augmented mixed Hodge diagram of Lie algebras} is the data of
a mixed Hodge diagram of Lie algebras $L$ and a Lie algebra
$\mathfrak{g}$ carrying a mixed Hodge structure, seen as a mixed Hodge
diagram of Lie algebras concentrated in degree zero,
together with a morphism
\begin{equation}
\epsilon: L \longrightarrow \mathfrak{g}
\end{equation}
of mixed Hodge diagrams of Lie algebras.
\end{definition}

Now we can state and prove the main theorem of this section.

\begin{theorem}
\label{theorem:MHD-L-infinity}
Let ${\epsilon:L\rightarrow \mathfrak{g}}$ be an augmented mixed Hodge diagram
of Lie algebras.  Assume that in each of the components $L_i$ the filtration $W$
has only non-negative weights (i.e.\ $W_k L_i=0$ for $k<0$) and that the mixed
Hodge structure on $\mathfrak{g}$ is pure of weight zero.  Then the desuspended
mapping cone $C$ of $\epsilon$ with its $L_\infty$ algebra structure constructed
by Fiorenza-Manetti is a mixed Hodge diagram of $L_\infty$ algebras.
\end{theorem}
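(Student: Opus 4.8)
The plan is to layer the two structures that $C$ already carries and then check they are compatible in the sense of Definition~\ref{definition:MHD-L-infinity}. Since $\epsilon$ is in particular a morphism of mixed Hodge complexes, Proposition~\ref{proposition:mapping-cone-mixed-Hode-complex} makes the desuspended cone $C$ a mixed Hodge complex, with
$$W_k C_i^n = W_k L_i^n \oplus W_{k+1}\mathfrak{g}_i^{n-1}, \qquad F^p C_\CC^n = F^p L_\CC^n \oplus F^p \mathfrak{g}_\CC^{n-1}.$$
Applying Theorem~\ref{theorem:main-Fiorenza-Manetti} to each component $\epsilon_i\colon L_i\to\mathfrak{g}$ puts an $L_\infty$ structure on every $C_i$, and the functoriality clause there shows that the comparison quasi-isomorphisms, being morphisms of the diagram of DG Lie algebras over $\epsilon$, induce strong $L_\infty$-morphisms. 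What remains is to verify that the brackets preserve $W$ and $F$; I would phrase this as the statement that the codifferential $Q$ of $\mathscr{C}(C)$ preserves both filtrations on the generators $C[1]$, which is the formulation actually needed in section~\ref{section:bar-MHD}.

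Compatibility with $F$ is immediate for all $r$: there is no shift of $F$ in the cone, and by Theorem~\ref{theorem:main-Fiorenza-Manetti} every bracket beyond $\ell_1$ is built from iterated Lie brackets valued in $\mathfrak{g}$, so the inclusion $[F^p\mathfrak{g}_\CC,F^q\mathfrak{g}_\CC]\subset F^{p+q}\mathfrak{g}_\CC$---valid because the bracket of $\mathfrak{g}$ is a morphism of Hodge structures---propagates through the formulas. For $W$, the operation $\ell_1$ is the cone differential already contained in the mixed Hodge complex structure, so nothing is checked. For $\ell_2$, the naive bracket~\eqref{equation:naive-bracket}, the first component lands in the $L$-summand and is governed by the bracket of $L$, while the second component lies in the $\mathfrak{g}$-summand; there the weight raised by feeding an $\epsilon$-image into $\mathfrak{g}$ is exactly absorbed by the $+1$ appearing in the weight of the $\mathfrak{g}$-summand of the cone, so the additive estimate closes.

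The heart of the argument, and the step I expect to be the main obstacle, is the weight compatibility of the higher brackets $\ell_{r+1}$ for $r\geq 2$, given by the Bernoulli formula~\eqref{equation:description-higher-brackets-long}. Since $\epsilon(x)\neq 0$ forces $x\in L^0$, these vanish unless the inputs are $u_1\odot\cdots\odot u_r\otimes x$ with $u_i\in\mathfrak{g}$ and $x\in L^0$, and then the value is an $r$-fold iterated bracket lying in $\mathfrak{g}$. The delicate point is precisely the weight bookkeeping for this term, and it is here that both hypotheses are used. Since $\mathfrak{g}$ is pure of weight zero, each generator $u_i$ and the output occupy the bottom weight of the $\mathfrak{g}$-summand, which the cone-and-desuspension convention places at weight $0$ among the generators $C[1]$; since $W_kL_i=0$ for $k<0$, the single remaining generator $x\in L^0$ is carried by the same convention to weight at least $1$ among the generators. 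Thus the $r$ factors $u_i$ contribute $0$ and the factor $x$ contributes at least $1$, so the total input weight dominates that of the output and $Q$ preserves $W$. The subtlety to be handled carefully is that neither constraint alone suffices---without purity of $\mathfrak{g}$ the output could rise to a strictly higher weight, and without non-negativity of the weights on $L$ the $L$-type input would not supply the compensating unit---so it is the interaction of the cone's weight shift with the two hypotheses that makes the estimate close uniformly in $r$.
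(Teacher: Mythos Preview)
Your proposal is correct and follows essentially the same route as the paper: verify the filtration compatibilities of Definition~\ref{definition:MHD-L-infinity} operation by operation, using the explicit formulas of Theorem~\ref{theorem:main-Fiorenza-Manetti}, with the crux being the higher brackets where the two hypotheses conspire. Your framing in terms of the codifferential $Q$ on $C[1]$ is equivalent to the paper's direct check on $\ell_r$ in $C$; the shift absorbs one unit of weight and makes your bookkeeping slightly cleaner (inputs from $\mathfrak{g}$ sit at weight $0$, the $L^0$ input at weight $\geq 1$, the output at weight $0$), whereas the paper carries the $+1$'s explicitly and arrives at the inequality $W_{k_1+\cdots+k_r+r-1}(\mathfrak{g})\subset W_{k_1+\cdots+k_r+1}(\mathfrak{g})$ before invoking purity.

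One small inaccuracy to fix: in your $F$-paragraph you say ``every bracket beyond $\ell_1$ is built from iterated Lie brackets valued in $\mathfrak{g}$''. This is false for $\ell_2$, whose first component is the bracket of $L$ itself landing in the $L$-summand. The $F$-compatibility there needs $[F^pL_\CC,F^qL_\CC]\subset F^{p+q}L_\CC$, which is part of the hypothesis that $L$ is a mixed Hodge diagram of Lie algebras. The conclusion is unaffected, but the sentence should be amended to cover this case.
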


\begin{proof}
It is practical to consider $\mathfrak{g}$ as a mixed Hodge diagram of Lie
algebras concentrated in degree $0$. So we will write terms $\mathfrak{g}_i^n$
that are zero for $n\neq 0$.  The structure of mixed Hodge complex on $C$ is
written in Proposition~\ref{proposition:mapping-cone-mixed-Hode-complex} (with
$C=\Cone(\epsilon)[-1]$), the axioms to be checked are in
Definition~\ref{definition:MHD-L-infinity} using the operations of $L_\infty$
algebra on $C$ described in Theorem~\ref{theorem:main-Fiorenza-Manetti}.

Let $Q$ be the codifferential on the cofree coalgebra on $C[1]$ which
gives the structure of $L_\infty$ algebra to $C$, with its components
$q_r$ ($r\geq 1$), and $\ell_r$ are the corresponding operations on
$C$.  Up to sign and shift of grading, the operations $q_r$ and
$\ell_r$ are given by the same algebraic formulas.

First check the compatibility for $W$ in some component $C_i$.

For $\ell_1$, which is the differential of the desuspended mapping cone:
take $(x,u)\in W_k C_i^n$, so that $x\in W_k L_i^n$ and
$u\in W_{k+1} \mathfrak{g}_i^{n-1}$.  Then we know that
\begin{equation}
\ell_1(x,u) = (d(x),\ \epsilon(x) - d(u))
\end{equation}

(actually $d=0$ on $\mathfrak{g}_i$).  But $d(x)\in W_k L^{n+1}_i$,
$\epsilon(x)\in W_k \mathfrak{g}_i^n \subset W_{k+1} \mathfrak{g}_i^n$
and $d(u)\in W_{k+1} \mathfrak{g}_i^n$. So one sees that
\begin{equation}
\ell_1(x,u)\in W_k C_i^{n+1} .
\end{equation}

For $\ell_2$, which is the naive bracket: take $(x,u)\in W_k C_i^n$, $(y,v)\in W_\ell C_i^m$, so
that $x\in W_{k} L_i^{n}$, $u\in W_{k+1} \mathfrak{g}_i^{n-1}$,
${y\in W_{\ell} L_i^{m}}$, $v\in W_{\ell+1} \mathfrak{g}_i^{m-1}$. We
want to show that
\[ \ell_2 ((x,u)\wedge (y,v)) \in W_{k+\ell}C_i^{n+m} .\]
For the part $\ell_2(x\wedge y)$ this is given (up to sign) by
$[x,y]$, and it is in $W_{k+\ell} L_i^{n+m}$.  For
$\ell_2(u\otimes y)$ this is given up to sign by $[u,\epsilon(y)]$
which is in $W_{(k+1)+\ell} \mathfrak{g}_i^{(n-1)+m}$.  This proves
the compatibility for $\ell_2$.

Now for the higher operations $\ell_r$ with $r\geq 3$ there is only
one compatibility in the
relation~\eqref{equation:description-higher-brackets-long} to check,
and up to sign this is just an iterated bracket. So take $r$ elements
\begin{equation}
(x_j,u_j)\in W_{k_j} C_i^{n_j}, \quad j=1,\dots,r 
\end{equation}
so that $x_j \in W_{k_j} L^{n_j}$ and
$u_j \in W_{k_j +1} \mathfrak{g}^{n_j - 1}$.  In computing
$\ell_r ((x_1,u_1)\wedge\cdots\wedge (x_r, u_r))$, the only nonzero
part is when we multiply only one of the $x_j$ with the others $u_j$;
call it $x_{s}$. Since $\mathfrak{g}_i$ is concentrated in degree $0$,
this is zero if all $u$ are not of degree zero or if $x_s$ is not of
degree $0$. So we can assume $n_s=0$ and $n_j=1$ for $j\neq s$.  Then
for the iterated bracket, and for a permutation
$\{t_1,\dots,t_{r-1}\}$ of $\{1,\dots,\hat{s},\dots,r\}$,
\begin{multline} 
\label{equation:proof-suspended-cone-mixed-Hodge-diagram-L-infinity}
[u_{t_1},[u_{t_2},\dots,[u_{t_ {r-1}},\epsilon(x_{s})]\dots]] \\ \in
W_{(k_{t_1}+1) + \cdots + (k_{t_{r-1}}+1)+ k_s}
\mathfrak{g}^{(n_{t_1}-1) + \cdots + (n_{t_{r-1}}-1) + n_{s}}_i  \\
= W_{k_1+\cdots + k_r + (r - 1)} \mathfrak{g}_i^{n_1 + \cdots + n_r -
r + 1} = W_{k_1+\cdots + k_r + r - 1}(\mathfrak{g}_i) .
\end{multline}
One would like
\begin{equation}
 \ell_r ((x_1,u_1)\wedge\cdots\wedge (x_r, u_r)) \in W_{k_1 + \cdots
+ k_r} C_i^{n_1 + \cdots + n_r + 2 - r} = W_{k_1 + \cdots + k_r}
C_i^{1}
\end{equation}
so that the iterated bracket
in~\eqref{equation:proof-suspended-cone-mixed-Hodge-diagram-L-infinity}
would land in $W_{k_1+\cdots+k_r+1}(\mathfrak{g}_i)$.  But if we
assume that $\mathfrak{g_i}$ has pure weight zero and since $r\geq 3$,
the condition
\[ W_{k_1+\cdots + k_r + r - 1}(\mathfrak{g_i}) \subset
W_{k_1+\cdots+k_r+1}(\mathfrak{g_i}) \subset \mathfrak{g_i} \]
is realized by an equality as soon as $k_1+\cdots+k_r+1\geq 0$. So,
under the assumption that $L_i$ has only non-negative weights, one can
reduce the compatibility checking to $k_1,\dots,k_r \geq 0$ and this
equality is realized.

The condition to check for $F$ on $C_\CC$ is much easier because there
is no shift in the filtration. One sees directly that
$(x,u)\in F^p C_\CC^n$ means $x\in F^p L_\CC^n$,
$u\in F^p \mathfrak{g}_\CC^{n-1}$, so that $d(x)\in F^p L_\CC^{n+1}$,
$\epsilon(x)\in F^p \mathfrak{g}_\CC^n$ and
$d(u)\in F^p \mathfrak{g}_\CC^n$ so
\begin{equation}
 \ell_1(x,u) \in F^p C_\CC^{n+1} .
 \end{equation}
For $\ell_2$ then $(x,u)\in F^p C_\CC^n$, $(y,v)\in F^q C_\CC^m$ means
that $x\in F^p L_\CC^{n}$, $u\in F^p \mathfrak{g}_\CC^{n-1}$,
${y\in F^q L_\CC^{m}}$, $v\in F^q \mathfrak{g}_\CC^{m-1}$. So
\begin{multline}
\ell_2((x,u)\wedge (y,v)) = \left( [x,y],\
  \frac{1}{2}\ [u,\epsilon(y)]+\frac{(-1)^{|x|}}{2}\,[\epsilon(x), v] \right) \\
\in F^{p+q} L_\CC^{n+m} \oplus F^{p+q} \mathfrak{g}_\CC^{n+m-1} =
F^{p+q} C_\CC^{n+m} .
\end{multline}
Finally for the higher operations, take again
$(x_j,u_j)\in F^{p_j} C_\CC^{n_j}$ so that
$x_j \in F^{p_j} L_\CC^{n_j}$ and
$u_j \in F^{p_j} \mathfrak{g}_\CC^{n_j - 1}$. Again, select one
element $x_s$ and consider a permutation $\{t_1,\dots,t_{r-1}\}$ of
$\{1,\dots,\hat{s},\dots,r\}$. Then
\begin{multline} [u_{t_1},[u_{t_2},\dots,[u_{t_
{r-1}},\epsilon(x_{s})]\dots]] \\ \in F^{p_{t_1} + \cdots + p_{t_{r-1}} +
p_s}  \mathfrak{g}_\CC^{(n_{t_1}-1) + \cdots + (n_{t_{r-1}}-1) + n_{s}} 
= F^{p_1+\cdots + p_r} \mathfrak{g}_\CC^{n_1 + \cdots + n_r - r + 1} .
\end{multline}
This checks directly that
\begin{equation}
\ell_r((x_1,u_1)\wedge\cdots\wedge (x_r,u_r)) \in F^{p_1 + \cdots +
p_r} C_\CC^{n_1+\cdots+n_r+2-r}.
\end{equation}
\end{proof}

Our theorem needs to be completed by a study of quasi-isomorphisms because we
will have to work with mixed Hodge diagrams defined up to quasi-isomorphism.

\begin{lemma}
\label{lemma:quasi-isomorphism-deformation-functor-augmented-infinity-MHD}
Let ${\phi:L\stackrel{\approx}{\longrightarrow} L'}$ be a quasi-isomorphism of mixed Hodge diagrams of
augmented Lie algebras over the same Lie algebra $\mathfrak{g}$ with a mixed
Hodge structure, satisfying both the hypothesis of
Theorem~\ref{theorem:MHD-L-infinity}. Then the induced morphism
${\psi:C\rightarrow C'}$ between the desuspended mapping cones is a
quasi-isomorphism of mixed Hodge diagrams of $L_\infty$ algebras.
\end{lemma}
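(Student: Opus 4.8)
The plan is to verify that $\psi$ is simultaneously a morphism of $L_\infty$ algebras (in each component), a filtered quasi-isomorphism for $W$, and a bifiltered quasi-isomorphism for $F$ on the complex component. The key observation is that the mapping cone construction is functorial in both the $L_\infty$-algebra structure (by Theorem~\ref{theorem:main-Fiorenza-Manetti}) and the mixed Hodge complex structure (by Proposition~\ref{proposition:mapping-cone-mixed-Hode-complex}), so the morphism $\psi$ exists and respects all the required structures almost by construction. Since both $L$ and $L'$ satisfy the hypotheses of Theorem~\ref{theorem:MHD-L-infinity}, both $C$ and $C'$ are genuinely mixed Hodge diagrams of $L_\infty$ algebras, so the only thing left to establish is that $\psi$ is a quasi-isomorphism in the appropriate filtered senses.

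First I would note that $\psi$ is induced component-wise by $\phi$, that is, in component $i$ it is given on $C_i^n = L_i^n \oplus \mathfrak{g}_i^{n-1}$ by $(x,u)\mapsto(\phi_i(x),u)$, since the augmentations agree over the same $\mathfrak{g}$. This is a strong morphism of $L_\infty$ algebras because $\phi$ commutes with the augmentations and the higher brackets of the cone are built only from the Lie bracket on $\mathfrak{g}$ and the augmentation maps, which $\phi$ preserves. It remains to check the filtered quasi-isomorphism properties.

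For this, the natural tool is the long exact sequence of the mapping cone together with the five lemma. The desuspended cone $C$ fits into a distinguished triangle relating $L$, $\mathfrak{g}$, and $C$, and after passing to graded pieces $\Gr^W_k$ (respectively $\Gr^W_k\Gr_F^p$ on the complex component) one obtains long exact sequences for $C$ and $C'$ linked by the maps induced from $\phi$ and the identity on $\mathfrak{g}$. The hypothesis that $\phi$ is a filtered (respectively bifiltered) quasi-isomorphism means it induces isomorphisms on $\Gr^W_k(L_i)$-cohomology and on $\Gr^W_k\Gr_F^p(L_\CC)$-cohomology, while the map on $\mathfrak{g}$ is the identity hence trivially an isomorphism on all graded pieces. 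Applying the five lemma to the morphism of long exact sequences then forces the induced map on the cohomology of the graded pieces of $C$ to be an isomorphism, which is exactly the statement that $\psi$ is a filtered (respectively bifiltered) quasi-isomorphism.

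The main subtlety I anticipate is bookkeeping the filtration shifts correctly: in the desuspended cone the $\mathfrak{g}$-summand sits with a weight shift ($W_k C_i^n = W_k L_i^n \oplus W_{k+1}\mathfrak{g}_i^{n-1}$) and a cohomological degree shift, so the long exact sequence of graded pieces must be written with the matching index shifts before the five lemma can be applied cleanly. Once the shifted triangle is set up and one checks that the connecting maps commute with $\phi$ and $\mathrm{id}_\mathfrak{g}$, the argument is formal. I expect no genuine obstacle here, only care in aligning the $W$ and $F$ gradings with the shift conventions of Proposition~\ref{proposition:mapping-cone-mixed-Hode-complex}.
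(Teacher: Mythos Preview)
Your proposal is correct and follows essentially the same approach as the paper: functoriality of the Fiorenza--Manetti cone gives that $\psi$ is a morphism of mixed Hodge diagrams of $L_\infty$ algebras, and then one applies the five lemma to the long exact sequence of the cone after passing to $\Gr^W_k$ (and $\Gr^W_k\Gr_F^p$ over $\CC$). The paper's proof is terser and does not dwell on the filtration-shift bookkeeping you flag, but the argument is the same.
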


\begin{remark}
Without taking into account mixed Hodge complexes, the fact that $\psi$ is a
quasi-isomorphism of $L_\infty$ algebras follows from the functoriality of the
structure of Fiorenza-Manetti (showing that $\psi$ is a morphism of $L_\infty$
algebras) and from the five lemma applied to the long exact of the mapping cone
(showing that $\psi$ is a quasi-isomorphism). And by the way one recovers the
Lemma~\ref{lemma:quasi-isomorphism-deformation-functor-augmented} by invariance
of the deformation functor under quasi-isomorphism.
\end{remark}

\begin{proof}[{Proof of Lemma~\ref{lemma:quasi-isomorphism-deformation-functor-augmented-infinity-MHD}}]
By functoriality $\psi$ is already a morphism of mixed Hodge diagrams of
$L_\infty$ algebras. Then the argument is the same as in the above remark but
applying first the functor $\Gr$: in the component $i$ by definition $\phi$
induces a quasi-isomorphism
$\Gr^W_k(L_i)\stackrel{\approx}{\longrightarrow} \Gr^W_k (L'_i)$. So apply the five
lemma to the long exact sequence of the mapping cones of
$\Gr^W_k( L_i)\rightarrow \Gr^W_k( \mathfrak{g})$ and
$\Gr^W_k (L'_i)\rightarrow \Gr^W_k (\mathfrak{g})$ then $\psi$ is a filtered
quasi-isomorphism in the component $i$. Similarly over $\CC$ with the two
filtrations $W,F$ then $\psi$ is a bifiltered quasi-isomorphism.
\end{proof}

\section{Bar construction on mixed Hodge diagrams of \texorpdfstring{$L_\infty$}{L-infinity} algebras}
\label{section:bar-MHD}
Then we show the compatibility of the bar construction with the mixed Hodge
diagrams of $L_\infty$ algebras. We will show that if $L$ is such a mixed Hodge
diagram then $\mathscr{C}(L)$ (defined by applying the functor $\mathscr{C}$ to
each component of the diagram) is a (inductive limit of) mixed Hodge diagram of
coalgebras, and its $H^0$ is then a coalgebra with a mixed Hodge structure.

Recall that $\mathscr{C}(L)$ has a canonical increasing filtration given by
\begin{equation}
\mathscr{C}_s(L) := \bigoplus_{r=1}^s (L[1])^{\odot r}, \quad s\geq 1 
\end{equation}
by sub-DG coalgebras that we call the \emph{bar filtration} and $\mathscr{C}(L)$
is the inductive limit of the $\mathscr{C}_s(L)$. We will work with a fixed
index $s$ and we will always consider $\mathscr{C}(L)$ as an inductive limit.

\begin{definition}
Let $(L,W)$ be a filtered $L_\infty$ algebra (i.e.\ $W$ satisfies
equation~\eqref{equation:MHD-L-infinity-W} of
Definition~\ref{definition:MHD-L-infinity}). The filtration induced by $W$ on
$\mathscr{C}(L)$ via $W[1]$ on $L[1]$ and then by multiplicative extension to
$(L[1])^{\odot r}$ is called the \emph{bar-weight filtration}. We denote it by
$\mathscr{C}W$.
\end{definition}

As in Hain's work \cite[\S~3.2]{Hain}, the bar-weight filtration is a
convolution of the weight filtration and the bar filtration.

\begin{remark}
The bar-weight filtration on $\mathscr{C}(L)$ may not be biregular, however it
is biregular on each $\mathscr{C}_s(L)$ if $L$ is bounded-below because it
involves only a finite number of symmetric products.
\end{remark}

\begin{lemma}
Let $(L,W)$ be a filtered $L_\infty$ algebra, bounded-below. Then
$\mathscr{C}_s(L)$ is a filtered DG coalgebra for the bar-weight filtration.
\end{lemma}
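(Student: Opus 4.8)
The plan is to verify the two defining compatibilities of a filtered DG coalgebra for the bar-weight filtration $\mathscr{C}W$ — that the deconcatenation coproduct $\Delta$ and the codifferential $Q$ both respect $\mathscr{C}W$ — together with biregularity on each $\mathscr{C}_s(L)$. Recall that $\mathscr{C}W$ is the multiplicative (convolution) extension of the shifted filtration $W[1]$ on $L[1]$, so that
\begin{equation*}
\mathscr{C}W_k\big((L[1])^{\odot r}\big) = \sum_{a_1 + \cdots + a_r = k} (W[1]_{a_1} L[1]) \odot \cdots \odot (W[1]_{a_r} L[1]),
\end{equation*}
and $\mathscr{C}W_k(\mathscr{C}_s(L))$ is the sum of these over $1\leq r\leq s$.

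For the coproduct I would observe that the deconcatenation formula~\eqref{equation:deconcatenation-coproduct} only regroups the factors of a monomial $\xi_1 \odot \cdots \odot \xi_r$ into a left and a right block without altering any individual factor. Hence if the monomial lies in $\mathscr{C}W_k$, each resulting term has its left block in some $\mathscr{C}W_a$ and its right block in some $\mathscr{C}W_b$ with $a+b$ equal to the original total weight, hence $\leq k$. Thus $\Delta(\mathscr{C}W_k) \subset (\mathscr{C}W \otimes \mathscr{C}W)_k$ for the convolution filtration on the tensor product, and since a product of at most $s$ factors splits into two blocks of at most $s$ factors each, $\Delta$ restricts to $\mathscr{C}_s(L) \to \mathscr{C}_s(L) \otimes \mathscr{C}_s(L)$. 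This step is essentially formal.

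The substantive point is the compatibility of $Q$. Here I would use that $Q$ is the coderivation determined by its components $q_r$, acting on a monomial by
\begin{equation*}
Q(\xi_1 \odot \cdots \odot \xi_n) = \sum_{r=1}^{n} \sum_{\sigma\in\mathfrak{S}(r,n-r)} \pm\, q_r(\xi_{\sigma(1)} \odot \cdots \odot \xi_{\sigma(r)}) \odot \xi_{\sigma(r+1)} \odot \cdots \odot \xi_{\sigma(n)},
\end{equation*}
where the Koszul signs are irrelevant to a filtration argument. Each term replaces $r$ factors by the single factor $q_r(\cdots)$ and leaves the remaining $n-r$ untouched, so it suffices to show $q_r$ is filtered in the sense that it sends $(W[1]_{a_1} L[1]) \odot \cdots \odot (W[1]_{a_r} L[1])$ into $W[1]_{a_1 + \cdots + a_r} L[1]$. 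Granting this, the untouched factors contribute their own weights and each output term has total weight at most that of the input, giving $Q(\mathscr{C}W_k) \subset \mathscr{C}W_k$; moreover $q_r$ merges $r$ factors into one, so $Q$ decreases the number of factors and preserves $\mathscr{C}_s(L)$.

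The delicate part — the main obstacle — is the bookkeeping relating the hypothesis~\eqref{equation:MHD-L-infinity-W} on the operations $\ell_r$ to the required bound on $q_r$ after the degree shift. Writing $\xi_i = x_i[1]$ with $\xi_i \in W[1]_{a_i} L[1]$, i.e.\ $x_i \in W_{a_i - 1} L$, and using that $q_r$ and $\ell_r$ agree up to sign and shift, the defining inequality of a filtered $L_\infty$ algebra gives $\ell_r(x_1 \wedge \cdots \wedge x_r) \in W_{(a_1-1)+\cdots+(a_r-1)} L = W_{a_1 + \cdots + a_r - r} L$; shifting back by $[1]$ places $q_r(\xi_1 \odot \cdots \odot \xi_r)$ in $W[1]_{a_1 + \cdots + a_r - r + 1} L[1]$, which is contained in $W[1]_{a_1 + \cdots + a_r} L[1]$ precisely because $r \geq 1$, with the case $r=1$ (the differential) preserving weight exactly. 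This is exactly the filtered bound needed above. Finally, biregularity of $\mathscr{C}W$ on each $\mathscr{C}_s(L)$ follows from $L$ being bounded below: in any fixed total degree only finitely many symmetric powers $(L[1])^{\odot r}$ with $r \leq s$ contribute, and $W$ is biregular on $L$, so $\mathscr{C}W$ induces a finite filtration in each degree.
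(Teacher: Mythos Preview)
Your proof is correct and follows essentially the same approach as the paper: both reduce the compatibility of $Q$ with $\mathscr{C}W$ to checking the components $q_r$, and both carry out the identical shift computation showing $q_r$ lands in $W[1]_{a_1+\cdots+a_r-r+1}L[1]\subset W[1]_{a_1+\cdots+a_r}L[1]$. Your write-up is somewhat more explicit (you spell out the coderivation formula and the biregularity argument), but there is no substantive difference in method.
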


\begin{proof}
As $\mathscr{C}W$ is induced by $W[1]$ on $L[1]$ and then by multiplicative
extension, and seeing the algebraic formula for the coproduct of the cofree
coalgebra (Definition~\ref{definition:cofree-coalgebra}
equation~\eqref{equation:deconcatenation-coproduct}), it is clear that
$\mathscr{C}W$ is compatible with the graded coalgebra structure.  Then we have
to show that the codifferential $Q$ of $\mathscr{C}_s(L)$ respects the
filtration, and it is enough to check it for its components
$q_r:(L[1])^{\odot r}\rightarrow L[1]$ $(r\geq 1$) because of the explicit
formula for recovering $Q$ from its components (\cite[VIII.34]{ManettiLectures}).

So take $r$ elements
\begin{equation}
x_j[1] \in W[1]_{k_j} L_i[1]^{n_j},\quad j=1,\dots,r 
\end{equation}
which means $x_j \in W_{k_j - 1} L_i^{n_j + 1}$. Then
\begin{multline}
\label{eq:q-W-bar}
q_r (x_1[1] \odot \cdots \odot x_r[1]) = \pm
\ell_r(x_1\wedge\cdots\wedge x_r) \\ \in W_{(k_1-1)+\cdots +
(k_r-1)} L_i^{(n_1+1)+\cdots+ (n_r+1)+(2-r)} \\
\subset W_{k_1+\cdots+k_r-1} L_i^{n_1+\cdots+n_r+2}=
W[1]_{k_1+\cdots+k_r} L_i[1]^{n_1+\cdots+n_r+1} .
\end{multline}
This is the desired compatibility.
\end{proof}

If $L$ is a bifiltered $L_\infty$ algebra (i.e.\ equipped with filtration $W,F$
as is $L_\CC$ in Definition~\ref{definition:MHD-L-infinity}) then there is an
induced filtration $F$ on $\mathscr{C}_s(L)$ defined by the induced $F$ (without
shifting) to $L[1]$ and then by multiplicative extension. It is then easier to
see that $\mathscr{C}_s(L)$ is also a filtered coalgebra for $F$:
take $r$ elements
\begin{equation}
x_j[1]\in F^{p_j} L_\CC[1]^{n_j},\quad j=1,\dots,r 
\end{equation}
which means $x_j \in F^{p_j} L_\CC^{n_j +1}$ and then directly
\begin{multline}
q_r(x_1[1]\odot\cdots\odot x_r[1]) = \pm \ell_r(x_1\wedge\cdots\wedge
x_r) \\
\in F^{p_1+\cdots+p_r} L_\CC^{(n_1+1)+\cdots+(n_r+1)+(2-r)} \\
= F^{p_1+\cdots+p_r} L_\CC^{n_1+\cdots+n_r+2} = F^{p_1+\cdots+p_r}
L_\CC[1]^{n_1+\cdots+n_r+1} .
\end{multline}

To sum up, if $L$ is a mixed Hodge diagram of $L_\infty$ algebras whose
components $L_i$ are bounded-below, then $\mathscr{C}_s(L)$ is a diagram
consisting of a filtered DG coalgebra $(\mathscr{C}_s(L_\kk),\mathscr{C}W)$ over
$\kk$ and a bifiltered DG coalgebra $(\mathscr{C}_s(L_\CC),\mathscr{C}W,F)$ over
$\CC$, related by a chain of morphisms
\begin{equation}
(\mathscr{C}_s(L_\kk),\mathscr{C}W) \otimes_\kk \CC
= (\mathscr{C}_s(L_\kk \otimes_\kk \CC),\mathscr{C}W)
\longleftrightarrow (\mathscr{C}_s(L_\CC),\mathscr{C}W)
\end{equation}
of DG coalgebras filtered by $\mathscr{C}W$.

We arrive finally at our main goal. For this we follow closely the method of
Hain \cite[\S~3]{Hain} for commutative DG algebras, re-writing it for $L_\infty$
algebras. This is also re-written in \cite[\S~8.7]{PetersSteenbrink}.

\begin{theorem}
\label{theorem:mixed-Hodge-diagram-bar}
Let $L$ be a mixed Hodge diagram of $L_\infty$ algebras.  Assume that each
component $L_i$ is non-negatively graded ($L_i^n=0$ for $n<0$) and that
$H^0(L)=0$.  Then $\mathscr{C}_s(L)$ is a mixed Hodge diagram of coalgebras for
any $s\geq 1$ and $\mathscr{C}(L)$ is an inductive limit of mixed Hodge diagrams
of coalgebras.
\end{theorem}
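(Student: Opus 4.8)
The plan is to verify directly that the diagram $(\mathscr{C}_s(L), \mathscr{C}W, F)$ satisfies the three axioms of Definition~\ref{definition:mixed-Hodge-complex}, since the preceding lemmas already furnish the underlying diagram of filtered (resp.\ bifiltered) DG coalgebras with $\mathscr{C}W$-filtered comparison morphisms, and the coalgebra structure is compatible with the filtrations by construction. The whole strategy is to pass to the $\mathscr{C}W$-associated graded and reduce everything to symmetric products of the pure Hodge complexes $\Gr^W_k(L)$, for which the desired properties are already recorded (Proposition~\ref{proposition:tensor-product-mixed-Hodge-complex} together with the remark that symmetric products of mixed Hodge complexes are again mixed Hodge complexes, and the shift construction on mixed Hodge complexes). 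First I would note that $(L[1], W[1], F)$ is a mixed Hodge complex, so that each symmetric power $(L[1])^{\odot r}$ equipped with the convolution filtrations — which is exactly $\mathscr{C}W$ in the $W$-direction and the multiplicative $F$ — is a mixed Hodge complex whose cohomology carries the symmetric power of the mixed Hodge structure on $H^\bullet(L[1])$.

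The subtle point, and the heart of the proof, is that $\mathscr{C}_s(L)$ is not the direct sum of these symmetric powers as a complex: the codifferential $Q=\sum_r q_r$ mixes bar degrees, with $q_r$ lowering the bar degree by $r-1$. The key claim I would establish is that on the $\mathscr{C}W$-associated graded all higher components vanish, i.e.\ $\bar{q}_r=0$ for $r\geq 2$, leaving only the internal differential $\bar{q}_1$ induced by $\ell_1$. This follows by refining the computation~\eqref{eq:q-W-bar}: if $x_j[1]\in W[1]_{k_j}(L_i[1])$ then $x_j\in W_{k_j-1}(L_i)$, so by the defining condition~\eqref{equation:MHD-L-infinity-W} the image $\ell_r(x_1\wedge\cdots\wedge x_r)$ lands in $W_{k_1+\cdots+k_r-r}(L_i)$, that is in $W[1]_{k_1+\cdots+k_r-r+1}(L_i[1])$; for $r\geq 2$ this strictly lowers the bar-weight $k_1+\cdots+k_r$, whereas for $r=1$ it is preserved. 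Thus $\Gr^{\mathscr{C}W}_m\mathscr{C}_s(L)$ is precisely the direct sum over $r\leq s$ and over $k_1+\cdots+k_r=m$ of the symmetric products $\Gr^{W}_{k_1-1}(L)[1]\odot\cdots\odot\Gr^{W}_{k_r-1}(L)[1]$, with differential induced by $\ell_1$ alone.

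Having identified the associated graded, I would deduce the axioms by the Künneth formula and weight bookkeeping. Each $\Gr^W_k(L)$ is a pure Hodge complex with $H^{n'}(\Gr^W_k L)$ of weight $k+n'$; hence $H^{n_j}(\Gr^W_{k_j-1}(L)[1])=H^{n_j+1}(\Gr^W_{k_j-1}L)$ is pure of weight $k_j+n_j$, and the Künneth isomorphism gives on the degree-$n$ part of the $r$-fold symmetric product a pure Hodge structure of total weight $\sum_j(k_j+n_j)=m+n$. This is exactly Axiom~(3) of Definition~\ref{definition:mixed-Hodge-complex} for $\mathscr{C}_s(L)$ with the filtration $\mathscr{C}W$; Axiom~(2), the $F$-strictness of the differential on $\Gr^{\mathscr{C}W}_m(\mathscr{C}_s(L)_\CC)$, follows from the corresponding strictness on each $\Gr^W_k(L_\CC)$ propagated through symmetric products (Proposition~\ref{proposition:tensor-product-mixed-Hodge-complex} and its symmetric-product extension). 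For Axiom~(1) I would use the hypotheses $L_i^n=0$ for $n<0$ and $H^0(L)=0$, which place $H^\bullet(L)$ in degrees $\geq 1$ and keep $\mathscr{C}W$ biregular on each $\mathscr{C}_s(L)$; since $H^\bullet(\Gr^W_\bullet L)$ is finite-dimensional and supported in finitely many bidegrees, the symmetric powers up to bar degree $s$ keep $H^n(\Gr^{\mathscr{C}W}_m\mathscr{C}_s(L))$ finite-dimensional and nonzero for only finitely many $m$, so the weight spectral sequence yields $H^n(\mathscr{C}_s(L))$ finite-dimensional.

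Finally I would treat the comparison quasi-isomorphisms and the inductive limit. A filtered quasi-isomorphism $L_i\to L_j$ induces quasi-isomorphisms on each $\Gr^W_k$, hence — after applying $\mathscr{C}_s$ and passing to $\Gr^{\mathscr{C}W}$, where the differential is only $\ell_1$ — quasi-isomorphisms of the symmetric products by Künneth (the filtered avatar of Theorem~\ref{theorem:L-infinity-quasi-isomorphism}); thus $\mathscr{C}_s$ carries the comparison morphisms of $L$ to $\mathscr{C}W$-filtered, and over $\CC$ to bifiltered, quasi-isomorphisms. Passing to the inductive limit over $s$ then gives the statement for $\mathscr{C}(L)$, with the caveat that $\mathscr{C}W$ is only biregular on each $\mathscr{C}_s(L)$. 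The main obstacle I anticipate is precisely the vanishing of the higher $\bar{q}_r$ on the associated graded: it is what makes $\mathscr{C}_s(L)$ degenerate into a sum of symmetric products of pure Hodge complexes and thereby reduces the theorem to the already-established multiplicative constructions, and it relies crucially on the degree shift $[1]$ producing the extra weight drop in~\eqref{equation:description-higher-brackets-long}.
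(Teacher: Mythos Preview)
Your proposal is correct and follows essentially the same route as the paper: the crucial step is the observation that $q_r$ for $r\geq 2$ vanishes on $\Gr^{\mathscr{C}W}$ (your second paragraph, the paper's use of~\eqref{eq:q-W-bar}), which reduces $\mathscr{C}_s(L)$ on the associated graded to a direct sum of symmetric powers of $L[1]$ (equivalently, exterior powers of $L$ shifted) treated as mixed Hodge complexes, whence the axioms and the filtered quasi-isomorphism property follow by K\"unneth. The only cosmetic differences are that the paper organizes the computation via the spectral sequence pages ${}_{\mathscr{C}W}E_0$, ${}_{\mathscr{C}W}E_1$ of $L^{\wedge r}$ rather than decomposing into $\Gr^W$-factors, and handles Axiom~(1) by transfer of the $L_\infty$ structure to cohomology rather than via the weight spectral sequence; your final reference to~\eqref{equation:description-higher-brackets-long} is a slip (that equation concerns the Fiorenza--Manetti cone, not the bar construction), but your actual argument correctly invokes~\eqref{equation:MHD-L-infinity-W} and~\eqref{eq:q-W-bar}.
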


\begin{proof}
We need to check the axioms of Definition~\ref{definition:mixed-Hodge-complex}
for $\mathscr{C}_s(L)$. The fact that $\mathscr{C}(L)$ will be an inductive
limit in the category of mixed Hodge diagrams will then be clear.

We fix temporarily a component $L_i$. Since we took the precaution to work with
inductive limits, $\mathscr{C}_s(L_i)$ is a bounded-below complex because it is
obtained by a finite numbers of symmetric powers from $L_i[1]$ which is
bounded-below. Also, the filtration induced by $W$ is biregular.

The axiom~\ref{MHC-axiom-1} is almost checked during the proof of
Theorem~\ref{theorem:pro-representability-theorem}: using transfer of structure
to the cohomology, up to $L_\infty$-quasi-isomorphism (which does not change the
cohomology of $\mathscr{C}_s(L)$), one can assume that $L_i$
has $L_i^n=0$ for $n\leq 0$ and other terms $L_i^n$ finite-dimensional. Then
$L_i[1]^n=0$ for $n<0$ and $\mathscr{C}_s(L_i)$ is a finite sum of a finite
number of symmetric powers of such $L_i[1]$ so is finite-dimensional in each
degree and so is $H^\bullet(\mathscr{C}_s(L_i))$.

To go further we need to compute the spectral sequence for the bar-weight
filtration on $\mathscr{C}_s(L)$ and relate it to the spectral sequence for the
weight filtration on~$L$.

Since $W$ and
$\mathscr{C}W$ are decreasing filtrations, we work with $-k$ instead
of $k$. We denote by $\mathscr{C}_s(L_i)^m$ the component of total
degree $m$ in $\mathscr{C}_s(L_i)$.  Then by definition of the
spectral sequence
\begin{equation}
{}_{\mathscr{C}W} E_0^{-k,q}(\mathscr{C}_s(L_i)) =
\Gr^{\mathscr{C}W}_{k}\mathscr{C}_s(L_i)^{-k+q} . 
\end{equation}
Since
\begin{multline}
\mathscr{C}W_k (L_i[1])^{\odot r} = \bigoplus_{k_1+\cdots+k_s=k}
W[1]_{k_1} L_i[1] \odot \cdots \odot
W[1]_{k_s} L_i[1] \\
= \bigoplus_{k_1+\cdots+k_r=k-r} W_{k_1} L_i[1] \odot \cdots \odot
W_{k_s} L_i[1]
\end{multline}
it follows that 
\begin{equation}
\Gr^{\mathscr{C}W}_{k}\mathscr{C}_s(L_i)^{-k+q} = \bigoplus_{r=1}^s
\Gr^W_{k-r} ((L_i[1])^{\odot r})^{-k+q} = \bigoplus_{r=1}^s
\Gr^W_{k-r} (L_i^{\wedge r})^{-k+q+r}.
\end{equation}
So we recognize
\begin{equation}
\label{eq:bar-mixed-Hodge-E0}
{}_{\mathscr{C}W} E_0^{-k,q}(\mathscr{C}_s(L_i)) = \bigoplus_{r=1}^s {}_W E_0^{-k+r,q}(L_i^{\wedge r}) .
\end{equation}
The differential $d_0$ on
${}_{\mathscr{C}W} E_0^{-k,\bullet}(\mathscr{C}_r(L_i))$ is induced by the
codifferential $Q:=\sum q_r$ of $\mathscr{C}(L_i)$. Crucial here is the
equation~\eqref{eq:q-W-bar} appearing in the preceding proof, which shows that
$q_r$ is zero on $\Gr^{\mathscr{C}W}_{k}\mathscr{C}(L_i)$ for all $r\geq
2$.
Thus $d_0$ is only induced by $q_1$, which is up to sign $d[1]$, and it is the
sum of the $d_0$'s appearing on the right side
of~\eqref{eq:bar-mixed-Hodge-E0}. But by
Proposition~\ref{proposition:tensor-product-mixed-Hodge-complex} $L^{\wedge r}$ is
a mixed Hodge complex and this right side is a direct sum of terms ${}_W E_0$ of
mixed Hodge complexes.

This computation allows us to check that the comparisons morphisms are
quasi-isomorphisms. Let
\begin{equation}
\phi: (L_i,W) \stackrel{\approx}{\longrightarrow} (L_j,W)
\end{equation}
be some comparison morphism between the two components $L_i, L_j$,
which by hypothesis is a filtered quasi-isomorphism. By the Künneth
formula (combined with the fact that we work with bounded below
complexes) $\phi$ induces a filtered quasi-isomorphism
\begin{equation}
 ((L_i)^{\wedge r}, W^{\wedge r}) \stackrel{\approx}{\longrightarrow}
((L_j)^{\wedge r}, W^{\wedge r})
\end{equation}
 so it induces an isomorphism
 \begin{equation}
 {}_W E_0^{-k+r,q}(L_i^{\wedge r}) \stackrel{\simeq}{\longrightarrow}
{}_W E_0^{-k+r,q}(L_j^{\wedge r}) .
\end{equation}
So equation~\eqref{eq:bar-mixed-Hodge-E0} tells us precisely that
$\phi$ induces a filtered quasi-isomorphism
\begin{equation}
 (\mathscr{C}_s(L_i), \mathscr{C}W)
\stackrel{\approx}{\longrightarrow} (\mathscr{C}_s(L_j),
\mathscr{C}W).
\end{equation}

Then we can check the axiom~\ref{MHC-axiom-2}, in the component over $\CC$ carrying
also the filtration $F$.  By this axiom for $L_\CC^{\wedge r}$ the differential
of ${}_W E_0^{-k+r,\bullet}(L_\CC^{\wedge r})$ is strictly compatible with the
induced filtration $F$, so from equation~\eqref{eq:bar-mixed-Hodge-E0} the
differential of ${}_{\mathscr{C}W} E_0^{k,\bullet}(\mathscr{C}_r(L_\CC))$ is the
direct sum of these and is also strictly compatible with $F$.

Finally to check the axiom~\ref{MHC-axiom-3} we compute the spectral sequence at the
page $E_1$. By definition
\begin{equation}
 {}_{\mathscr{C}W} E_1^{-k,q}(\mathscr{C}_s(L_i)) =
H^{-k+q}(\Gr^{\mathscr{C}W}_{k} \mathscr{C}_s(L_i)) 
\end{equation}
where the cohomology is computed with respect to $d_0$.  Then
using~\eqref{eq:bar-mixed-Hodge-E0}
\begin{equation}
 H^{-k+q}(\Gr^{\mathscr{C}W}_{k} \mathscr{C}_s(L_i)) =
\bigoplus_{r=1}^s H^{-k+q}(\Gr^W_{k-r}(L_i^{\wedge r})^{\bullet+r}) =
\bigoplus_{r=1}^s {}_W E_1^{-k+r,q}(L_i^{\wedge r}) .
\end{equation}
So, put together,
\begin{equation}
\label{eq:bar-mixed-Hodge-E1}
{}_{\mathscr{C}W} E_1^{-k,q}(\mathscr{C}_s(L_i)) =
\bigoplus_{r=1}^s {}_W E_1^{-k+r,q}(L_i^{\wedge r}) .
\end{equation}
Since $L^{\wedge r}$ is a mixed Hodge complex, in
equation~\eqref{eq:bar-mixed-Hodge-E1} the terms on the right side
\begin{equation}
{}_W E_1^{-k+r,q}(L_i^{\wedge r}) = H^{-k+r+q}(\Gr^W_{k-r} L_i^{\wedge r}) 
 \end{equation}
define, when varying $i$, a pure Hodge structure of weight $q$. So does their
direct sum and this proves that the terms
\begin{equation}
 {}_{\mathscr{C}W} E_1^{-k,q}(\mathscr{C}_s(L_i)) =
H^{-k+q}(\Gr^{\mathscr{C}W}_{k} \mathscr{C}_s(L_i)) 
\end{equation}
define a pure Hodge structure of weight $q$.
\end{proof}

As for the main theorem of the preceding section we need to complete our
theorem by a study of quasi-isomorphisms. It is already clear that this
construction is functorial.

\begin{lemma}[{Compare with
Thm.~\ref{theorem:L-infinity-quasi-isomorphism}}]
\label{lemma:quasi-isomorphism-MHD-L-infinity}
Let ${\phi:L\stackrel{\approx}{\longrightarrow} L'}$ be a quasi-isomorphism of
mixed Hodge diagrams of $L_\infty$ algebras satisfying both the hypothesis of
Theorem~\ref{theorem:mixed-Hodge-diagram-bar}. Then the induced morphism
$\mathscr{C}(\phi)$ is a quasi-isomorphism of mixed Hodge diagrams of
coalgebras.
\end{lemma}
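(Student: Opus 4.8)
The plan is to reuse the spectral-sequence computation carried out in the proof of Theorem~\ref{theorem:mixed-Hodge-diagram-bar}: there the identity~\eqref{eq:bar-mixed-Hodge-E0} was already used to show that the \emph{comparison} morphisms of $\mathscr{C}_s(L)$ are filtered quasi-isomorphisms, and exactly the same mechanism applies to $\phi$. Since $\phi$ is a strong morphism of $L_\infty$ algebras in each component, $\mathscr{C}(\phi)$ is a genuine morphism of graded coalgebras, and by the preceding theorem both source and target are mixed Hodge diagrams of coalgebras; it then remains only to verify that $\mathscr{C}(\phi)$ is a \emph{quasi-isomorphism} of such, i.e.\ that $\mathscr{C}(\phi)_i$ is a filtered quasi-isomorphism for $\mathscr{C}W$ in every component $i$ and that $\mathscr{C}(\phi)_\CC$ is a bifiltered quasi-isomorphism for $(\mathscr{C}W,F)$. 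As the bar-weight filtration is biregular on each $\mathscr{C}_s(L_i)$, I would treat each $\mathscr{C}_s$ and then read off the conclusion for the inductive limit $\mathscr{C}(L)$.

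Fixing a component $L_i$, the hypothesis is that $\phi_i\colon (L_i,W)\to (L'_i,W)$ is a filtered quasi-isomorphism, i.e.\ $\Gr^W(\phi_i)$ is a quasi-isomorphism. Since we work with bounded-below complexes, the Künneth formula shows that $\phi_i$ induces a filtered quasi-isomorphism on the exterior powers $(L_i^{\wedge r},W^{\wedge r})\to ((L'_i)^{\wedge r},W^{\wedge r})$, hence an isomorphism
\begin{equation}
{}_W E_1^{-k+r,q}(L_i^{\wedge r}) \stackrel{\simeq}{\longrightarrow} {}_W E_1^{-k+r,q}((L'_i)^{\wedge r})
\end{equation}
for all $k,q,r$. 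Now the decomposition~\eqref{eq:bar-mixed-Hodge-E0} is natural in the $L_\infty$ algebra, and by~\eqref{eq:q-W-bar} the $d_0$-differential on ${}_{\mathscr{C}W}E_0$ is induced only by $q_1=\pm\, d[1]$, which $\mathscr{C}(\phi)$ respects because $\phi$ commutes with $\ell_1$. Passing to cohomology, the map induced on $E_1$ is, via~\eqref{eq:bar-mixed-Hodge-E1}, the direct sum of the isomorphisms above, hence itself an isomorphism. This is precisely the statement that $\mathscr{C}(\phi)_i$ is a filtered quasi-isomorphism for $\mathscr{C}W$.

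Over $\CC$ I would repeat the argument keeping track of the additional filtration $F$. There $\phi_\CC$ is by hypothesis a bifiltered quasi-isomorphism; the bifiltered Künneth formula (using the Zassenhaus lemma to commute the two gradings, as in the proof of Theorem~\ref{theorem:mixed-Hodge-diagram-bar}) yields bifiltered quasi-isomorphisms on the exterior powers $(L_\CC^{\wedge r},W,F)\to ((L'_\CC)^{\wedge r},W,F)$, and the bifiltered analogue of~\eqref{eq:bar-mixed-Hodge-E0} assembles these into the desired bifiltered quasi-isomorphism $(\mathscr{C}_s(L_\CC),\mathscr{C}W,F)\to (\mathscr{C}_s(L'_\CC),\mathscr{C}W,F)$.

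The argument is essentially formal once the decomposition~\eqref{eq:bar-mixed-Hodge-E0} is in hand, so the only real content to watch is the naturality of that decomposition under $\phi$ together with the applicability of the Künneth formula in the (bi)filtered bounded-below setting. Both were already secured in the proof of Theorem~\ref{theorem:mixed-Hodge-diagram-bar}, so the main obstacle — if any — is purely bookkeeping: confirming that $\mathscr{C}(\phi)$ is compatible term by term with the identification~\eqref{eq:bar-mixed-Hodge-E0}, which holds because $\phi$ is a strong $L_\infty$-morphism and hence commutes with the graded pieces of both the codifferential and the comultiplication.
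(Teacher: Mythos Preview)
Your argument is correct and follows essentially the same route as the paper: the paper's proof is a terse pointer back to the proof of Theorem~\ref{theorem:mixed-Hodge-diagram-bar}, using the naturality of the decomposition~\eqref{eq:bar-mixed-Hodge-E1} together with the K\"unneth formula on exterior powers to conclude that $\mathscr{C}(\phi)_i$ is a filtered quasi-isomorphism, and then handles the bifiltered part by replacing $L_\CC$ with $\Gr_F^p(L_\CC)$, which is the same mechanism you describe via the bifiltered K\"unneth formula. Your write-up is simply a more explicit unpacking of that sketch.
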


\begin{proof}
By the functoriality and explicit nature of the bar construction, it is already
clear that $\mathscr{C}(\phi)$ is a morphism of diagrams of filtered DG
coalgebras, compatible with the bar filtration. Then, following the proof of the
preceding theorem, we see that in the component $i$ and in
equation~\eqref{eq:bar-mixed-Hodge-E1} the hypothesis tells us that $\phi_i$
induces an isomorphism on the right-hand side, so it induces an isomorphism on
the left-hand side. Similarly for the bifiltered part we repeat the arguments
replacing $L_\CC$ by $\Gr^p_F(L_\CC)$.
\end{proof}

Let us sum up what we will need.

\begin{corollary}
\label{corollary:mixed-Hodge-diagram-bar-conclusion}
If $L$ is a mixed Hodge diagram of $L_\infty$ algebras satisfying the hypothesis
of Theorem~\ref{theorem:mixed-Hodge-diagram-bar} then the pro-Artin algebra
\begin{equation}
R := \kk \oplus H^0(\mathscr{C}(L))^*
\end{equation}
of the pro-representability Theorem~\ref{theorem:pro-representability-theorem}
has a mixed Hodge structure, functorial in $L$, independent of $L$ up to
quasi-isomorphism.
\end{corollary}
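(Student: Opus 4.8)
The plan is to assemble the three preceding results---the pro-representability Theorem~\ref{theorem:pro-representability-theorem}, the bar-construction Theorem~\ref{theorem:mixed-Hodge-diagram-bar}, and the duality Lemma~\ref{lemma:conilpotent-coalgebras-dual-pro-artin}---and then to transport the mixed Hodge structure across the duality between conilpotent coalgebras and pro-Artin algebras. First I would check that the hypotheses of both theorems hold simultaneously. Since each component satisfies $L_i^n=0$ for $n<0$ we have $H^n(L)=0$ for $n<0$, and $H^0(L)=0$ is assumed, so $H^n(L)=0$ for all $n\leq 0$; moreover $H^1(L)$ is finite-dimensional by axiom~\ref{MHC-axiom-1} of the underlying mixed Hodge complex. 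Thus Theorem~\ref{theorem:pro-representability-theorem} applies and $R=\kk\oplus H^0(\mathscr{C}(L))^*$ is a pro-Artin algebra, while Theorem~\ref{theorem:mixed-Hodge-diagram-bar} applies and makes each $\mathscr{C}_s(L)$ a mixed Hodge diagram of coalgebras.

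Next I would put the mixed Hodge structure on the coalgebra $H^0(\mathscr{C}(L))$. By Deligne's fundamental theorem on mixed Hodge complexes, each $H^0(\mathscr{C}_s(L))$ carries a mixed Hodge structure; since the comultiplication is a morphism of mixed Hodge diagrams, hence of mixed Hodge complexes, it induces on $H^0(\mathscr{C}_s(L))$ a comultiplication that is a morphism of mixed Hodge structures, so $H^0(\mathscr{C}_s(L))$ is a coalgebra with a mixed Hodge structure. The inclusions $\mathscr{C}_s(L)\hookrightarrow\mathscr{C}_{s+1}(L)$ are morphisms of mixed Hodge diagrams of coalgebras, so on $H^0$ they give morphisms of mixed Hodge structures compatible with comultiplication. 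Passing to the inductive limit (cohomology commutes with this filtered colimit, as already recorded in Theorem~\ref{theorem:mixed-Hodge-diagram-bar}) gives $H^0(\mathscr{C}(L))$ the structure required by the excerpt: its canonical filtration is by finite-dimensional sub-coalgebras by the conclusion of Theorem~\ref{theorem:pro-representability-theorem} via Lemma~\ref{lemma:conilpotent-coalgebras-dual-pro-artin}, each such piece is contained in some $H^0(\mathscr{C}_s(L))$ and inherits a sub-mixed-Hodge-structure, and these are compatible with the inclusions.

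Then I would dualize. The category of mixed Hodge structures is abelian and carries a duality compatible with the Deligne splitting (Definition~\ref{definition:Deligne-splitting}). Dualizing the finite-dimensional pieces of the canonical filtration and invoking Lemma~\ref{lemma:conilpotent-coalgebras-dual-pro-artin} endows each quotient $R/(\mathfrak{m}_R)^n$ with a mixed Hodge structure; the transition surjections $R/(\mathfrak{m}_R)^{n+1}\rightarrow R/(\mathfrak{m}_R)^n$, being dual to the inclusions, are morphisms of mixed Hodge structures, and the comultiplication dualizes to the multiplication of $R$, so $R$ is a pro-Artin algebra with a mixed Hodge structure in the required sense. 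Functoriality in $L$ is then immediate, though contravariant: a morphism of mixed Hodge diagrams of $L_\infty$ algebras induces a morphism of the diagrams $\mathscr{C}$, hence of their $H^0$ as coalgebras with mixed Hodge structure, hence a morphism $R'\rightarrow R$. Independence up to quasi-isomorphism follows from Lemma~\ref{lemma:quasi-isomorphism-MHD-L-infinity}: a quasi-isomorphism $\phi:L\rightarrow L'$ induces a quasi-isomorphism $\mathscr{C}(\phi)$ of mixed Hodge diagrams of coalgebras, which induces an isomorphism of mixed Hodge structures on $H^0$ (a quasi-isomorphism of mixed Hodge complexes being an isomorphism on the cohomology mixed Hodge structures), and this dualizes to an isomorphism of the associated $R$'s.

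The one point demanding care---the main obstacle---is the passage through the inductive limit: one must verify that the mixed Hodge structures on the bar-filtration pieces $H^0(\mathscr{C}_s(L))$ genuinely assemble into a mixed Hodge structure on the \emph{canonical} filtration of the coalgebra $H^0(\mathscr{C}(L))$ as demanded by the definition, and that it is this structure which gets dualized to $R$. This is resolved by the finiteness of the canonical filtration: each canonical piece, being finite-dimensional, lies in some $H^0(\mathscr{C}_s(L))$ and inherits a sub-mixed-Hodge-structure independent of the chosen $s$, so the bar filtration and the canonical filtration are compatibly refined and the dualization of Lemma~\ref{lemma:conilpotent-coalgebras-dual-pro-artin} carries the mixed Hodge structure over faithfully.
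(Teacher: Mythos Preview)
Your proposal is correct and follows essentially the same route as the paper: both arguments deduce the mixed Hodge structure on $H^0(\mathscr{C}(L))$ from Theorem~\ref{theorem:mixed-Hodge-diagram-bar} applied to the bar-filtration pieces $\mathscr{C}_s(L)$, then dualize to $R$. The paper's proof is three sentences and leaves functoriality, quasi-isomorphism invariance (via Lemma~\ref{lemma:quasi-isomorphism-MHD-L-infinity}), and the matching of bar versus canonical filtration implicit; you spell these out, which is fine and arguably more careful.
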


\begin{proof}
Since $\mathscr{C}_s(L)$ is a mixed Hodge diagram, its $H^0$ has a mixed Hodge
structure and so has its dual. This defines a mixed Hodge structure on the
coalgebra $H^0(\mathscr{C}(L))$ and a mixed Hodge structure on the pro-Artin
algebra $R$.
\end{proof}

\section{Construction of mixed Hodge diagrams: the compact case}
\label{section:geo-compact}

In the two following sections we present several different geometric situations
concerning a complex manifold $X$ and a representation $\rho$ of its fundamental
group into a linear algebraic group $G$. In each of them we construct an
appropriate augmented mixed Hodge diagram of Lie algebras that controls the
deformation theory of $\rho$. Then the
machinery we developed in the two preceding sections gives us directly and
functorially a mixed Hodge structure on the complete local ring $\Ohat_\rho$ of
the representation variety $\Hom(\pi_1(X,x),G)$ at $\rho$.

The compact case is much easier to deal with because the construction of a mixed
Hodge diagram over $\RR$ computing the cohomology of a variety is
straightforward using the algebra of differential forms. So let $X$ be a compact
Kähler manifold, for example a smooth complex projective algebraic variety.

\begin{definition}
\label{definition:variation-Hodge-structure}
A real \emph{polarized variation of Hodge structure} of weight $k$ on $X$ is the
data of a local system of finite-dimensional real vector spaces $V$ on $X$ with
a decreasing filtration of the associated holomorphic vector bundle by
holomorphic sub-vector bundles $\mathcal{F}^\bullet \subset
V\otimes\mathcal{O}_X$, a flat bilinear map $Q : V\otimes V \longrightarrow
\RR$, and a flat connection $\nabla: V\otimes\mathcal{O}_X \longrightarrow V\otimes\Omega^1_X$
such that at each point $x\in X$ the data $(V_x, \mathcal{F}^\bullet_x, Q_x)$
forms a polarized Hodge structure of weight $k$. Furthermore $\nabla$ is
required to satisfy Griffiths' transversality $\nabla(\mathcal{F}^p) \subset \mathcal{F}^{p-1} \otimes \Omega^1_X$.
\end{definition}

Let $x$ be a base point of $X$.  Let
\begin{equation}
\rho:\pi_1(X,x) \longrightarrow GL(V_x)
\end{equation}
be a representation which is the monodromy of a real polarized variation of
Hodge structure $(V,\mathcal{F}^\bullet,\nabla,Q)$ of weight $k$ on $X$.  The
local system of Lie algebras associated to $\rho$ is now $\End(V)$, which by the
usual linear algebraic constructions is a polarized variation of Hodge
structures of weight zero. Explicitly
\begin{equation}
\mathcal{F}^p \End(V\otimes\mathcal{O}_X) = \big\{ f:V\otimes\mathcal{O}_X
\rightarrow V\otimes\mathcal{O}_X \ \big|\ f(\mathcal{F}^\bullet) \subset
\mathcal{F}^{\bullet + p} \big\}.
\end{equation}
One constructs a real mixed Hodge diagram as follows.  Let
\begin{equation}
L_\RR := \mathscr{E}^\bullet(X, \End(V))
\end{equation}
and
\begin{equation}
L_\CC := \mathscr{E}^\bullet(X, \End(V\otimes\CC)) .
\end{equation}
One defines a filtration $W$ which is the trivial one (everything has weight
zero) and a filtration $F$ on $L_\CC$ by
\begin{equation}
 F^p L_\CC^n := \bigoplus_{r+s \geq p} \mathscr{E}^{r,n-r}(X,
\mathcal{F}^s \End(V)).
\end{equation}
The DG Lie algebra structure is given as usual
in~\eqref{equation:differential-L} and~\eqref{equation;Lie-bracket-L}.

Then we define an augmentation $\epsilon_x$ from $(L_\RR,L_\CC,W,F)$ to the
mixed Hodge diagram of Lie algebras formed by the Hodge structure on the Lie
algebra $\mathfrak{g}:=\End(V_x)$ with
\begin{equation}
\mathfrak{g}_\CC=\End(V_x \otimes\CC)=\End(V_x)\otimes\CC
\end{equation}
and the Hodge filtration is simply $F^\bullet=\mathcal{F}^\bullet_x$, by
evaluating forms of degree zero at $x$ as in~\eqref{equation:augmentation-L}.

\begin{theorem}
\label{theorem:compact-L}
The data
\begin{equation}
(L_\RR, L_\CC, W, F)
\end{equation}
forms a real mixed Hodge diagram of Lie algebras. Together with
${\epsilon_x:L\rightarrow\mathfrak{g}}$ this is an augmented mixed Hodge diagram
of Lie algebras satisfying the hypothesis of
Theorem~\ref{theorem:MHD-L-infinity}.
\end{theorem}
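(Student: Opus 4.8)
The plan is to verify directly the axioms of Definition~\ref{definition:mixed-Hodge-complex} together with the multiplicative compatibilities, the only genuinely non-formal input being the Hodge theory of Deligne--Zucker. First I would observe that the chain of comparison quasi-isomorphisms is trivial here: since $V$ is a real local system,
\begin{equation}
L_\RR \otimes_\RR \CC = \mathscr{E}^\bullet(X,\End(V)\otimes_\RR\CC) = \mathscr{E}^\bullet(X,\End(V\otimes\CC)) = L_\CC ,
\end{equation}
so the diagram consists of $L_\RR$ and $L_\CC$ joined by the identity. Because the weight filtration $W$ is declared trivial (everything in weight zero) one has $\Gr^W_0 = L$ and $\Gr^W_k = 0$ for $k\neq 0$, so the mixed Hodge complex axioms collapse to pure statements concentrated in a single weight. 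The complex is bounded (real analytic forms on a compact manifold), hence bounded below as required.

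Next I would check the three axioms. Axiom~\ref{MHC-axiom-1} is immediate: $X$ is compact, so each $H^n(X,\End(V))$ is finite-dimensional. After the reduction above, Axioms~\ref{MHC-axiom-2} and~\ref{MHC-axiom-3} become exactly the assertions that the spectral sequence of the filtration $F$ on $L_\CC$ degenerates at $E_1$ (strictness of $d$ with respect to $F$) and that $H^n(L)$, with the induced $F$ over $\CC$ and the real structure $H^n(L_\RR)$, is a pure Hodge structure of weight $0+n=n$. Since $\End(V)$ is a polarized variation of Hodge structure of weight $0$, this is precisely the content of the theorem of Deligne--Zucker \cite{Zucker} on the cohomology of a compact Kähler manifold with coefficients in a polarized VHS: the harmonic theory furnished by the polarization metric, together with the Kähler identities generalized to the coefficient system, produces a Hodge decomposition of $H^n(X,\End(V))$ of weight $n$ and forces the strictness.

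I would then verify the multiplicative structure. The Lie bracket is induced by the commutator on $\End(V)$ together with the wedge of forms, and the Hodge filtration on $\End(V)$ is multiplicative for composition, $\mathcal{F}^s \cdot \mathcal{F}^{s'} \subset \mathcal{F}^{s+s'}$, because a map raising the filtration by $s$ followed by one raising it by $s'$ raises it by $s+s'$. Combined with the additivity of the form-type index under wedge, an element of $\mathscr{E}^{r,n-r}(\mathcal{F}^s)$ bracketed with one in $\mathscr{E}^{r',m-r'}(\mathcal{F}^{s'})$ lands in $\mathscr{E}^{r+r',(n+m)-(r+r')}(\mathcal{F}^{s+s'})$, whence $F^p\cdot F^q\subset F^{p+q}$; compatibility with the trivial $W$ is automatic. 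For the augmentation, evaluation at $x$ sends $F^p L_\CC^0 = \mathscr{E}^{0,0}(X,\mathcal{F}^p\End(V))$ into $F^p\mathfrak{g}_\CC = \mathcal{F}^p_x\End(V_x)$, is visibly a morphism of Lie algebras, and respects the trivial $W$; thus $\epsilon_x$ is a morphism of mixed Hodge diagrams of Lie algebras.

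Finally, the hypotheses of Theorem~\ref{theorem:MHD-L-infinity} hold by construction: the trivial filtration satisfies $W_k L_i = 0$ for $k<0$ (only non-negative weights), and $\mathfrak{g}=\End(V_x)$ carries the pure weight-zero Hodge structure attached to the weight-$k$ structure on $V_x$. The main obstacle is entirely concentrated in Axioms~\ref{MHC-axiom-2} and~\ref{MHC-axiom-3}: all the analytic difficulty — the Hodge decomposition and the $\partial\bar\partial$-type lemma for forms valued in the VHS — is imported wholesale from Deligne--Zucker, and the remaining verifications are formal.
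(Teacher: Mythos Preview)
Your proposal is correct and follows essentially the same approach as the paper: the paper's proof is a two-sentence appeal to Deligne--Zucker for the mixed Hodge complex axioms and a ``by construction'' for the compatibility of the bracket and $\epsilon_x$ with the filtrations, and you have simply unpacked both in detail. One terminological slip: the forms in $\mathscr{E}^\bullet$ are $\mathcal{C}^\infty$, not real analytic, though this does not affect your boundedness claim.
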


\begin{proof}
The fact that $L$ is a real mixed Hodge complex is essentially the classical
Hodge theory with values in a variation of Hodge structure of Deligne-Zucker
\cite[\S~2]{Zucker} and follows from the Kähler identities with twisted
coefficients. And by construction the Lie bracket and $\epsilon_x$ are compatible
with the filtrations.
\end{proof}

So we apply for the first time the method we developed.

\begin{theorem}
\label{theorem:main-compact}
If $X$ is a compact Kähler manifold and $\rho$ is the monodromy of a real
polarized variation of Hodge structure $V$ on $X$, then there is a real mixed
Hodge structure on the complete local ring $\Ohat_\rho$ of the representation
var\-ie\-ty $\Hom(\pi_1(X,x), GL(V_x))$ at $\rho$ which is functorial in $X,x,\rho$.
\end{theorem}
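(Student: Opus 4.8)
The plan is to feed the augmented mixed Hodge diagram of Lie algebras produced by Theorem~\ref{theorem:compact-L} into the abstract machinery of Sections~\ref{section:MHD-L-infinity}--\ref{section:bar-MHD}, and then to identify the resulting pro-Artin algebra with $\Ohat_\rho$ via the deformation-theoretic dictionary assembled in the earlier sections. First I would invoke Theorem~\ref{theorem:compact-L}: the datum $\epsilon_x:L\to\mathfrak{g}$ is an augmented mixed Hodge diagram of Lie algebras in which $L$ has only non-negative weights and $\mathfrak{g}=\End(V_x)$ is pure of weight zero. Hence Theorem~\ref{theorem:MHD-L-infinity} applies and the desuspended mapping cone $C=\Cone(\epsilon_x)[-1]$, equipped with its Fiorenza--Manetti $L_\infty$ structure, is a mixed Hodge diagram of $L_\infty$ algebras.

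The one point requiring a genuine verification rather than a citation is that $C$ satisfies the hypotheses of Theorem~\ref{theorem:mixed-Hodge-diagram-bar}. Non-negative grading is immediate: each component satisfies $C_i^n=L_i^n\oplus\mathfrak{g}_i^{n-1}$, and since $L_i$ lives in non-negative degrees and $\mathfrak{g}_i$ in degree $0$, one gets $C_i^n=0$ for $n<0$. For the vanishing $H^0(C)=0$ I would use the long exact sequence of the mapping cone from Proposition~\ref{proposition:mapping-cone-mixed-Hode-complex}, which (since $\mathfrak{g}$ is concentrated in degree $0$) identifies $H^0(C)$ with $\Ker\big(H^0(L)\xrightarrow{\epsilon_x}\mathfrak{g}\big)$. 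Now $H^0(L)=H^0(X,\End(V))$ is the space of flat (parallel) endomorphisms of $V$, and evaluation at the base point $x$ is injective because a flat section over a connected base is determined by its value at a single point; hence this kernel is zero. This is exactly where the geometry enters, and I expect it to be the main substantive (if short) step. As a byproduct, the finite-dimensionality of $H^1(C)$ needed for Theorem~\ref{theorem:pro-representability-theorem} follows from axiom~\ref{MHC-axiom-1} for the mixed Hodge complex $L$ together with $\dim_\RR\mathfrak{g}<\infty$.

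Granting this, Theorem~\ref{theorem:mixed-Hodge-diagram-bar} makes $\mathscr{C}(C)$ an inductive limit of mixed Hodge diagrams of coalgebras, and Corollary~\ref{corollary:mixed-Hodge-diagram-bar-conclusion} endows the pro-Artin algebra $R:=\RR\oplus H^0(\mathscr{C}(C))^*$ with a real mixed Hodge structure, functorial in $C$ and independent of $C$ up to quasi-isomorphism. It then remains to identify $R$ with $\Ohat_\rho$. Theorem~\ref{theorem:pro-representability-theorem} asserts that $R$ pro-represents the deformation functor $\Def_C$; Lemma~\ref{lemma:main-augmented-deformation-Manetti-Eyssidieux-Simpson} identifies $\Def_C$ with the augmented functor $\Def_{L,\epsilon_x}$; and Theorem~\ref{theorem:Goldman-Millson-main-augmented} identifies the latter with $\Def_\rho$, which is pro-represented by $\Ohat_\rho$. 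By uniqueness of the pro-representing object (pro-Yoneda), this chain yields a canonical isomorphism $R\simeq\Ohat_\rho$ along which I transport the mixed Hodge structure.

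Finally, functoriality in $(X,x,\rho)$ is inherited step by step: the assignment of $L$ and of the augmentation $\epsilon_x$ is functorial, as are the formation of the mapping cone, the bar construction $\mathscr{C}$, the passage to $H^0$, and linear duality. Combined with the independence up to quasi-isomorphism furnished by Corollary~\ref{corollary:mixed-Hodge-diagram-bar-conclusion}, this gives the asserted functoriality and completes the proof. The only place where anything genuinely has to be \emph{checked} (as opposed to cited or transported) is the vanishing $H^0(C)=0$ and the non-negative grading of $C$; everything else is a matter of chaining the established results together, so I do not anticipate any serious obstacle beyond being careful with the bookkeeping in the cone.
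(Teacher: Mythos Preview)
Your proposal is correct and follows essentially the same route as the paper's own proof: assemble the augmented mixed Hodge diagram from Theorem~\ref{theorem:compact-L}, apply Theorem~\ref{theorem:MHD-L-infinity} to the cone, verify $H^0(C)=0$ via injectivity of evaluation of flat sections at $x$, then invoke Theorem~\ref{theorem:mixed-Hodge-diagram-bar}/Corollary~\ref{corollary:mixed-Hodge-diagram-bar-conclusion} and identify $R$ with $\Ohat_\rho$ through the chain Theorem~\ref{theorem:pro-representability-theorem}, Lemma~\ref{lemma:main-augmented-deformation-Manetti-Eyssidieux-Simpson}, Theorem~\ref{theorem:Goldman-Millson-main-augmented} and pro-Yoneda. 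The only cosmetic differences are that you phrase the $H^0(C)=0$ step via the long exact sequence (the paper computes it directly on $C^0$) and that you make the non-negative grading and finite-dimensionality checks explicit where the paper leaves them implicit.
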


\begin{proof}
Over both fields $\RR$ and $\CC$, $L$ and its augmentation control the
deformation theory of $\rho$: this is the main theorem of Goldman-Millson
(Theorem~\ref{theorem:Goldman-Millson-main-augmented}). We will say that the
functor of deformations of $\rho$ is controlled by the mixed Hodge diagram of
augmented Lie algebras $L$. By
Lemma~\ref{lemma:main-augmented-deformation-Manetti-Eyssidieux-Simpson}, this
deformation functor is associated with the $L_\infty$ algebra structure on the
desuspended mapping cone $C$ of $\epsilon_x$.

We need to check that $H^0(C)=0$. By definition of the cone, a closed element of
$C^0$ is given by a $\mathcal{C}^\infty$ section $\omega$ of $\End(V)$ such that
$d(\omega)=0$, so that $\omega$ is locally constant, and such that
$\omega(x)=0$. So $\omega=0$ globally.

Then the deformation functor of $\rho$ is pro-represented by a pro-Artin algebra
that we denote by $R$ as in
Theorem~\ref{theorem:pro-representability-theorem}. By the pro-Yoneda lemma
this $R$ is canonically isomorphic to
$\Ohat_\rho$.  Again all this construction commutes with the change of base
field so we work with $C$ as mixed Hodge diagram. The augmented mixed Hodge
diagram of Lie algebras we just have defined satisfies the hypothesis of our
Theorem~\ref{theorem:MHD-L-infinity}. So $C$ is a mixed Hodge diagram of
$L_\infty$ algebras.  Then we apply the Theorem~\ref{theorem:mixed-Hodge-diagram-bar} (or its
Corollary~\ref{corollary:mixed-Hodge-diagram-bar-conclusion}) to get a mixed
Hodge structure on the pro-Artin algebra $R$, which as we said is canonically
isomorphic to $\Ohat_\rho$ (as pro-Artin algebra, so that each quotients by
powers of the maximal ideals are isomorphic). And this induces the mixed Hodge
structure on $\Ohat_\rho$.
\end{proof}

With the same method of proof, there are several possible variations. First one
can work with representations taking values in a real linear algebraic group.

\begin{proposition}
Let $G$ be a real linear algebraic group.  Let
${\rho:\pi_1(X,x)\rightarrow G(\RR)}$ be a representation which is the monodromy
of a real polarized variation of Hodge structure on $X$. Then there is a
functorial mixed Hodge structure on the local ring $\Ohat_\rho$ of the
representation variety $\Hom(\pi_1(X,x), G)$ at $\rho$.
\end{proposition}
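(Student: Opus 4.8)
The plan is to repeat verbatim the argument proving Theorem~\ref{theorem:main-compact}, with the local system of Lie algebras $\End(V)$ replaced by the adjoint bundle $\Ad(P)$ of the flat principal $G(\RR)$-bundle attached to $\rho$. Concretely I set
\[
L_\RR := \mathscr{E}^\bullet(X,\Ad(P)), \qquad L_\CC := \mathscr{E}^\bullet(X,\Ad(P)\otimes\CC),
\]
with the DG Lie algebra structure given as usual by the differential and the fiberwise bracket, augmented by $\epsilon_x$ as in~\eqref{equation:augmentation-L} to the fiber $\mathfrak{g}$ of $\Ad(P)$ at $x$, i.e.\ the Lie algebra of $G$. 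The only point genuinely new relative to Theorem~\ref{theorem:compact-L} is to equip $\Ad(P)$ with the structure of a real polarized variation of Hodge structure of weight zero, and $\mathfrak{g}$ with a Hodge structure pure of weight zero. Granting this, the filtrations $W$ (trivial) and $F$ (induced by the Hodge filtration of $\Ad(P)$) make $(L_\RR,L_\CC,W,F)$ together with $\epsilon_x$ an augmented mixed Hodge diagram of Lie algebras satisfying the hypotheses of Theorem~\ref{theorem:MHD-L-infinity}, since $W$ has only the weight $0$ and $\mathfrak{g}$ is pure of weight zero.

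To produce this Hodge structure I would fix a faithful linear representation $G\hookrightarrow GL(V_0)$; the hypothesis that $\rho$ is the monodromy of a real polarized variation of Hodge structure with values in $G(\RR)$ then provides, as in Definition~\ref{definition:variation-Hodge-structure}, a polarized variation $V$ with fiber $V_0$ at $x$ whose monodromy is $\rho$ composed with this embedding. As in Theorem~\ref{theorem:compact-L}, $\End(V)$ carries a weight-zero polarized variation, and $\Ad(P)\hookrightarrow\End(V)$ is a flat sub-bundle of Lie algebras (it is $\Ad\circ\rho$-invariant). The key step is that $\mathfrak{g}\subset\End(V_0)$ is a sub-Hodge structure of the weight-zero Hodge structure on $\End(V_0)$: the Hodge grading of $\End(V_0)$ is given by the adjoint action of the circle cocharacter underlying the Hodge structure on $V_0$ (the generator of the Weil operator), and for a variation with structure group $G$ this cocharacter is valued in $G(\RR)$, so it preserves $\mathfrak{g}$; equivalently, one may invoke polarizability and the semisimplicity of the category of polarized variations to equip $\Ad(P)$ with a canonical sub-variation structure. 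The Deligne--Zucker theory~\cite[\S~2]{Zucker} then applies to $\Ad(P)$ exactly as in the proof of Theorem~\ref{theorem:compact-L}.

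Once the diagram is in place the conclusion is identical to the proof of Theorem~\ref{theorem:main-compact}. Goldman--Millson (Theorem~\ref{theorem:Goldman-Millson-main-augmented}) gives $\Def_{L,\epsilon_x}\simeq\Def_\rho$, pro-represented by $\Ohat_\rho$; by Lemma~\ref{lemma:main-augmented-deformation-Manetti-Eyssidieux-Simpson} this functor is the one attached to the desuspended mapping cone $C$ of $\epsilon_x$. I check $H^0(C)=0$ as before: a closed element of $C^0$ is a flat, hence locally constant, section of $\Ad(P)$ vanishing at $x$, so it vanishes globally. By Theorem~\ref{theorem:MHD-L-infinity} the cone $C$ is a mixed Hodge diagram of $L_\infty$ algebras, and Theorem~\ref{theorem:mixed-Hodge-diagram-bar} together with Corollary~\ref{corollary:mixed-Hodge-diagram-bar-conclusion} endows the pro-Artin algebra $R:=\kk\oplus H^0(\mathscr{C}(C))^*$ of Theorem~\ref{theorem:pro-representability-theorem} with a mixed Hodge structure; the pro-Yoneda lemma identifies $R$ canonically with $\Ohat_\rho$, and functoriality in $X,x,\rho$ is clear from the construction. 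I expect the identification of the weight-zero Hodge structure on $\mathfrak{g}$ --- equivalently, the verification that $\Ad(P)$ is a sub-variation of Hodge structure --- to be the only real obstacle; every other step is formally the same as in the $GL(V_x)$ case.
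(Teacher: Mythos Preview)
Your proposal is correct and follows the same approach as the paper, which simply says to ``re-write the proof of Theorem~\ref{theorem:main-compact} by replacing $GL(V_x)$ by $G$ and $\End(V_x)$ by the Lie algebra $\mathfrak{g}$ of $G$.'' You have made explicit the one point the paper leaves implicit, namely that $\Ad(P)\subset\End(V)$ is a sub-variation of Hodge structure (equivalently that $\mathfrak{g}\subset\End(V_x)$ is a sub-Hodge structure), and your cocharacter argument for this is the standard one.
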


\begin{proof}
Re-write the proof of Theorem~\ref{theorem:main-compact} by replacing $GL(V_x)$
by $G$ and $\End(V_x)$ by the Lie algebra $\mathfrak{g}$ of $G$. Then again one
gets $L$ that is an augmented mixed Hodge diagram  Lie algebras over
$\mathfrak{g}$.
\end{proof}

One can also work with mixed Hodge structures over $\CC$, as in
\cite{EyssidieuxSimpson}.

\begin{definition}[{\cite[1.1]{EyssidieuxSimpson}}]
A \emph{complex Hodge structure} of weight $k$ is the data of a finite-dimensional vector space $K$
over $\CC$ with two filtrations $F, \overline{G}$, such that $K$ decomposes as
$K=\bigoplus_{p+q=k} K^{p,q}$ with
\begin{equation}
F^p K = \bigoplus_{p'\geq p} K^{p',q}, \quad \overline{G}^q K =
\bigoplus_{q'\geq q} K^{p,q'} .
\end{equation}

A \emph{complex mixed Hodge structure} is the data of a finite-dimensional vector
space $K$ over $\CC$ with two decreasing filtrations $F,\overline{G}$ and an
increasing filtration $W$ such that each graded part $\Gr^W_k (K)$ with the
induced filtrations $F,\overline{G}$ is a complex Hodge structure of weight
$k$.

We refer to \cite{EyssidieuxSimpson} for the definitions of polarization
(Def.~1.1) and of variation of Hodge structure (Def.~1.8) in this context.
\end{definition}

For example if $K$ is a mixed Hodge structure over $\kk\subset\RR$ then $K\otimes\CC$ is
canonically a mixed Hodge structure over $\CC$ with $\overline{G}$ being the
conjugate filtration of $F$. It is polarized if $K$ is.

So one can state the most general result:

\begin{proposition}
Let $G$ be a complex linear algebraic group. Let
$\rho:\pi_1(X,x)\rightarrow G(\CC)$ be a representation which is the monodromy
of a complex polarized variation of Hodge structure on $X$. Then there is a
functorial complex mixed Hodge structure on the local ring $\Ohat_\rho$ of the
representation variety $\Hom(\pi_1(X,x), G)$ at $\rho$.
\end{proposition}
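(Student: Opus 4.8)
The plan is to re-run the proof of Theorem~\ref{theorem:main-compact}, and of the preceding real-$G$ proposition, but now in the setting of complex mixed Hodge structures in the sense of Eyssidieux-Simpson \cite{EyssidieuxSimpson} rather than mixed Hodge structures defined over a real subfield. Here $\kk=\CC$. The first step is to construct the augmented complex mixed Hodge diagram of Lie algebras controlling the deformation theory of $\rho$. Let $\mathfrak{g}$ be the Lie algebra of $G$ and $\Ad(P)$ the adjoint local system of $\rho$; since $\rho$ is the monodromy of a complex polarized variation of Hodge structure, $\Ad(P)$ is itself a complex polarized variation of Hodge structure of weight zero. I would set $L_\CC := \mathscr{E}^\bullet(X, \Ad(P))$ with the trivial weight filtration $W$, the Hodge filtration $F$ defined from $\mathcal{F}^\bullet$ exactly as before, and a second decreasing filtration $\overline{G}$ defined from the conjugate filtration of the complex variation. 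The augmentation $\epsilon_x$ to $\mathfrak{g}$, taken pure of weight zero and equipped with $F,\overline{G}$ at the base point, is again evaluation of degree-zero forms at $x$ as in~\eqref{equation:augmentation-L}.

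Next I would check that $(L_\CC, W, F, \overline{G})$ together with $\epsilon_x$ is an augmented complex mixed Hodge diagram of Lie algebras. The Hodge-theoretic content — that the cohomology of $X$ with coefficients in $\Ad(P)$ carries a complex (here, pure) Hodge structure — is the twisted Hodge theory of Deligne-Zucker \cite{Zucker} in the form adapted to complex polarized variations used by Eyssidieux-Simpson, and follows from the Kähler identities with twisted coefficients. That the Lie bracket and $\epsilon_x$ are compatible with $F$ and with $\overline{G}$ is immediate from the constructions, and the hypotheses of Theorem~\ref{theorem:MHD-L-infinity} hold trivially: the weight filtration is concentrated in degree zero, so all weights are non-negative, and $\mathfrak{g}$ is pure of weight zero.

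With this in hand, I would observe that the entire machinery of sections~\ref{section:MHC}--\ref{section:bar-MHD} applies once one replaces a mixed Hodge structure over $\kk\subset\RR$ — a real form together with a filtration $F$ over $\CC$ — by a complex mixed Hodge structure — two filtrations $F,\overline{G}$ over $\CC$. The filtration $\overline{G}$ plays throughout a role formally symmetric to that of $F$: the proofs of Theorems~\ref{theorem:MHD-L-infinity} and~\ref{theorem:mixed-Hodge-diagram-bar} manipulate only the compatibility of the operations $\ell_r$ and of the codifferential with the given filtrations, together with strictness arguments at the level of the weight spectral sequence, none of which uses the real structure. Moreover, in the compact case the diagram has a single component (the differential forms), so no nontrivial comparison chain intervenes and one simply carries $\overline{G}$ alongside $F$. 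The verification $H^0(C)=0$ for the desuspended mapping cone $C$ of $\epsilon_x$ is the same computation as before: a $\mathcal{C}^\infty$ section of $\Ad(P)$ that is flat and vanishes at the base point $x$ is globally zero. Hence $\mathscr{C}(C)$ is a complex mixed Hodge diagram of coalgebras, and by the complex analog of Corollary~\ref{corollary:mixed-Hodge-diagram-bar-conclusion}, together with Theorem~\ref{theorem:pro-representability-theorem} and Theorem~\ref{theorem:Goldman-Millson-main-augmented}, the pro-Artin algebra $R = \kk\oplus H^0(\mathscr{C}(C))^*$ — canonically isomorphic to $\Ohat_\rho$ — acquires a functorial complex mixed Hodge structure.

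The main obstacle I expect is not any single hard computation but the book-keeping of confirming that all the formalism of sections~\ref{section:MHC}--\ref{section:bar-MHD}, stated for mixed Hodge structures over $\kk\subset\RR$, transports to complex mixed Hodge structures: one must check that every use of the comparison chain $(K_\kk,W)\otimes\CC\leftrightarrow(K_\CC,W,F)$ is replaced by a purely complex datum carrying the two filtrations $F,\overline{G}$, and that the spectral-sequence strictness arguments underlying the mixed-Hodge-complex axioms are insensitive to this change. Granting this — as Eyssidieux-Simpson do in their parallel construction — the result follows.
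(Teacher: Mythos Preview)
Your proposal is correct and follows exactly the paper's approach: define a complex mixed Hodge complex as a single complex over $\CC$ with filtrations $W,F,\overline{G}$ satisfying the analogues of the axioms of Definition~\ref{definition:mixed-Hodge-complex} (strictness for both $F$ and $\overline{G}$, pure complex Hodge structure on each $H^n(\Gr_k^W)$), and then re-run the entire machinery of sections~\ref{section:MHC}--\ref{section:bar-MHD} and Theorem~\ref{theorem:main-compact} with this notion. The paper's own proof is in fact terser than yours, consisting essentially of this definition plus the sentence ``re-write everything with complex mixed Hodge structures and complex mixed Hodge complexes''; your proposal simply spells out more of what that re-writing entails.
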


\begin{proof}
Define a \emph{complex mixed Hodge complex} to be the data of a DG vector space
$K$ over $\CC$ equipped with filtrations $W,F,\overline{G}$, satisfying the
usual axioms of mixed Hodge complex
(Definition~\ref{definition:mixed-Hodge-complex}): in the
axiom~\ref{MHC-axiom-2} we require the differential of $\Gr_k^W(K)$ to be
strictly compatible with \emph{both} filtrations $F,\overline{G}$ and in the
axiom~\ref{MHC-axiom-3} we require each term $H^n(\Gr_k^W(K))$ to carry a pure
complex Hodge structure with the induced filtrations $F,\overline{G}$.

Then each term of the cohomology of a complex mixed Hodge complex carries a
complex mixed Hodge structure.  So we can re-write everything with complex mixed
Hodge structures and complex mixed Hodge complexes.
\end{proof}

We can also show that the mixed Hodge structure on $\Ohat_\rho$ is defined over
$\kk\subset\RR$ if $\rho$ is (i.e.\ if the polarized variation of Hodge
structure whose monodromy is $\rho$ and the algebraic group $G$ are defined over
$\kk$).

\begin{proposition}
\label{proposition:main-compact-over-k}
In Theorem~\ref{theorem:main-compact} assume that the variation of Hodge
structure $V$ is defined over $\kk$. Then the mixed Hodge structure on
$\Ohat_\rho$ is defined over $\kk$.
\end{proposition}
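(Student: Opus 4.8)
The plan is to show that the construction of Theorem~\ref{theorem:main-compact} can be run verbatim over $\kk$, so that extension of scalars to $\RR$ recovers the real mixed Hodge structure already obtained. All the steps that produce the structure on $\Ohat_\rho$ from an augmented mixed Hodge diagram of Lie algebras are purely algebraic and functorial: forming the desuspended mapping cone $C$ with its $L_\infty$ structure (Theorem~\ref{theorem:MHD-L-infinity}), the bar construction $\mathscr{C}(C)$ (Theorem~\ref{theorem:mixed-Hodge-diagram-bar}), passing to $H^0$, and dualizing (Corollary~\ref{corollary:mixed-Hodge-diagram-bar-conclusion}, via the pro-representability Theorem~\ref{theorem:pro-representability-theorem}). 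Each of these commutes with $-\otimes_\kk\RR$. Hence it suffices to upgrade the augmented mixed Hodge diagram of Theorem~\ref{theorem:compact-L} to one with a genuine $\kk$-component $L_\kk$ in place of the smooth real model $L_\RR$, together with comparison quasi-isomorphisms defined over $\kk$.

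I would first record what the hypothesis on $V$ already supplies over $\kk$. Since the variation $V$ is defined over $\kk$, its fiber $V_x$ is a Hodge structure over $\kk$, so $\mathfrak{g}=\End(V_x)$ is a Lie algebra over $\kk$ carrying a pure Hodge structure of weight zero defined over $\kk$, and $\End(V)$ is a local system of $\kk$-Lie algebras on $X$. The weight filtration $W$ is the trivial one, hence defined over $\kk$ for free, and the Hodge filtration $F$ continues to live over $\CC$ exactly as before. Thus the only ingredient not yet available over $\kk$ is a DG Lie algebra $L_\kk$ over $\kk$, augmented to $\mathfrak{g}$, computing $H^\bullet(X,\End(V)_\kk)$ and related, after tensoring, to the analytic models $L_\RR$ and $L_\CC$ by a chain of filtered quasi-isomorphisms of augmented DG Lie algebras.

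The main obstacle is precisely the construction of this $\kk$-rational model, since $L_\RR=\mathscr{E}^\bullet(X,\End(V))$ is intrinsically real. Here I would invoke the Thom--Whitney functor $\RTW$ of Navarro Aznar \cite{Navarro}, the same tool used for the non-compact case: applied to a multiplicative (Godement) resolution of the sheaf of $\kk$-Lie algebras $\End(V)_\kk$, it yields a DG Lie algebra $L_\kk$ over $\kk$ whose cohomology is $H^\bullet(X,\End(V)_\kk)$, and pullback along the inclusion $\{x\}\hookrightarrow X$ furnishes the $\kk$-augmentation $\epsilon_x$ to $\mathfrak{g}$. Because $\RTW$ is monoidal and functorial, the canonical comparison morphisms connect $L_\kk\otimes_\kk\RR$ to the smooth model $L_\RR$ and $L_\kk\otimes_\kk\CC$ to $L_\CC$ by filtered quasi-isomorphisms of augmented DG Lie algebras, compatibly with the trivial $W$ and with $F$; the Hodge-theoretic axioms then hold because they are detected on cohomology, where the $F$-structure is the Deligne--Zucker one of \cite{Zucker} already used in Theorem~\ref{theorem:compact-L}. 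Feeding this $\kk$-rational augmented mixed Hodge diagram into the machinery of the first paragraph produces a mixed Hodge structure on $\Ohat_\rho$ defined over $\kk$, and by functoriality its extension of scalars to $\RR$ is the structure of Theorem~\ref{theorem:main-compact}.
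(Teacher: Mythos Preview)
Your proposal is correct and follows essentially the same approach as the paper: both produce the missing $\kk$-component of the mixed Hodge diagram by applying Navarro Aznar's Thom--Whitney functor $\RTW\Gamma$ to the local system of $\kk$-Lie algebras $\End(V)$, and obtain the augmentation from the sheaf map $\End(V)\to\End(V)_x$. The paper's proof is terser, but your added explanation of why the remaining steps (cone, bar construction, $H^0$, dual) commute with $-\otimes_\kk\RR$ is exactly the point being used implicitly.
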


However we will prove this only after studying the non-compact case. For this
one needs to construct mixed Hodge diagrams over $\QQ$ and this is rational
homotopy theory.

\section{Construction of mixed Hodge diagrams: the non-compact case}
\label{section:geo-non-compact}

In the case where $X$ is non-compact, the construction of an appropriate mixed
Hodge diagram that computes the cohomology of $X$ is more difficult and depends
on the choice of a compactification of $X$.

So let $X$ be a smooth quasi-projective algebraic variety over $\CC$. Let $x$ be
a base point of $X$. Let $G$ be a linear algebraic group over $\kk\subset\RR$
with Lie algebra $\mathfrak{g}$. Let
\begin{equation}
\rho : \pi_1(X,x) \longrightarrow G(\kk)
\end{equation}
be a representation and we assume that $\rho$ has \emph{finite} image. Under
these hypothesis the ideas to construct a controlling mixed Hodge diagram of Lie
algebras are already entirely present in the work of Kapovich-Millson
\cite[\S~14--15]{KapovichMillson}. However they rely strongly on the theory of
minimal models of Morgan \cite{Morgan} which is not completely functorial.
So we re-write these ideas using the more powerful
construction of mixed Hodge diagrams of Navarro Aznar \cite{Navarro}.

Let us first explain briefly the ideas and the notations.
Let the finite group
\begin{equation}
\Phi := \frac{\pi_1(X,x)}{\Ker(\rho)} \simeq \rho(\pi_1(X,x)).
\end{equation}
To $\Ker(\rho)\subset\pi_1(X,x)$ corresponds a finite étale Galois cover
$\pi:Y\rightarrow X$ with automorphism group $\Phi$ that acts simply transitively on
the fibers, and equipped with a fixed base point $y\in Y$ over $x$. It is known
that $Y$ is automatically a smooth quasi-projective algebraic variety. The flat
principal bundle $P$ induced by the holonomy of $\rho$ is trivial when
pulled-back to $Y$, as well as its adjoint bundle $\Ad(P)$. So the DG Lie
algebra of Goldman-Millson is (over $\RR$ or $\CC$)
\begin{equation}
\label{equation:equivariant-GM-finite-image}
L := \mathscr{E}^{\bullet}(X, \Ad(P)) = (\mathscr{E}^{\bullet}(Y,\pi^*\Ad(P)))^{\Phi} =
(\mathscr{E}^{\bullet}(Y)\otimes\mathfrak{g})^{\Phi}
\end{equation}
(where the exponent $\Phi$ denotes the invariants by the action of $\Phi$). In
order to construct a mixed Hodge diagram that is quasi-isomorphic to this we
simply want to find a mixed Hodge diagram for $Y$ equipped with an action of
$\Phi$, then tensor it with $\mathfrak{g}$, then take the invariants by $\Phi$.

For the augmentation, since there is a canonical identification of fibers
$\Ad(P)_x \simeq \mathfrak{g}$ one can define an augmentation
\begin{equation}
\label{equation:non-compact-augmentation-epsilon}
\epsilon_x : \mathscr{E}^\bullet(X, \Ad(P)) \longrightarrow \mathfrak{g}
\end{equation}
exactly as in the compact case~\eqref{equation:augmentation-L} by evaluating
degree zero forms at $x$ and higher degree forms to zero.  This augmentation can
be lifted equivariantly to $Y$: $\epsilon_x$ comes from the augmentation
\begin{equation}
\label{equation:lifted-augmentation}
\eta_x: \mathscr{E}^\bullet(Y)\otimes\mathfrak{g} \longrightarrow \mathfrak{g} 
\end{equation}
defined by 
\begin{equation}
\label{equation:lifted-augmentation-formula}
\eta_x(\omega\otimes u) := \frac{1}{|\Phi|} \sum_{g\in\Phi} \epsilon_y(g.(\omega\otimes u))
\end{equation}
where $\epsilon_y$ simply evaluates forms with values in $\mathfrak{g}$ at $y$. Observe the
notations: $\epsilon_y$ depends on $y$ but in the definition of $\eta_x$ we sum over the whole
(finite) fiber of $\pi$ over $x$ so $\eta_x$ depends only on $x$.
Then we see that $\eta_x$ induces $\epsilon_x$ when restricted to the
equivariant forms $(\mathscr{E}^\bullet(Y)\otimes\mathfrak{g})^\Phi$.

We will also need an \emph{equivariant completion} of $Y$. By the theorem of
Sumihiro \cite{Sumihiro} it is possible to compactify
${Y\hookrightarrow \overline{Y}'}$ so that the action of $\Phi$ extends to
$\overline{Y}'$. And then by the work of Bierstone-Milman on canonical
resolutions of singularities \cite[\S~13]{BierstoneMilman} one can construct a
resolution of singularities ${\overline{Y}\rightarrow\overline{Y}'}$ to which
the action of $\Phi$ lifts. This $\overline{Y}$ we call an equivariant
completion of $Y$ and $D:=\overline{Y}\setminus Y$ is a divisor with simple
normal crossings on which $\Phi$ acts.

From the data of a smooth quasi-projective variety $Y$ with a smooth
compactification $\overline{Y}$ by a divisor with normal crossings $D$, Navarro
Aznar \cite{Navarro} has constructed a functorial mixed Hodge diagram of
commutative algebras computing the cohomology algebra of $Y$ over
$\kk\subset\RR$ together with its mixed Hodge structure. Let us denote by
$\MHD(\overline{Y},D)_\kk$ this diagram. We will only need to know that each
component is related by a canonical chain of quasi-isomorphisms to the usual DG
algebra computing the cohomology of $Y$: the component over $\CC$ is related to
the usual differential forms on $Y$ via the holomorphic forms with logarithmic
poles along $D$, denoted by $\Omega^\bullet_{\overline{Y}}(\log D)$, and the
component over $\kk$ is related to the usual singular cochain complex over
$\kk$.

Since these mixed Hodge diagrams are functorial, in our situation the group
$\Phi$ acts on $\MHD(\overline{Y},D)_\kk$. We always denote by an exponent
$\Phi$ the diagram formed by the elements invariant under the action of $\Phi$.

\begin{lemma}
The diagram of invariants $\MHD(\overline{Y},D)_\kk^\Phi$ is again a mixed Hodge
diagram that computes canonically the cohomology of $X$ with its mixed Hodge
structure.
\end{lemma}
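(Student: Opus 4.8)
The plan is to verify the two claims of the lemma separately: first that $\MHD(\overline{Y},D)_\kk^\Phi$ remains a mixed Hodge diagram, and second that its cohomology recovers $H^\bullet(X)$ with its mixed Hodge structure. The key point for the first claim is that $\Phi$ is a \emph{finite} group and we work in characteristic zero, so taking $\Phi$-invariants is an exact functor (it is a direct summand, realized by the averaging projector $e:=\frac{1}{|\Phi|}\sum_{g\in\Phi} g$). First I would observe that since each $g\in\Phi$ acts on $\MHD(\overline{Y},D)_\kk$ as an automorphism of mixed Hodge diagrams, it preserves all the structure: the filtrations $W$ on the $\kk$-component and $W,F$ on the $\CC$-component, the comparison quasi-isomorphisms, and the algebra structure. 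Hence the idempotent $e$ is itself a morphism of mixed Hodge diagrams, and its image $\MHD(\overline{Y},D)_\kk^\Phi$ inherits filtrations $W$ (resp. $W,F$) and the comparison morphisms by restriction.

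To check that the restricted data satisfies the axioms of Definition~\ref{definition:mixed-Hodge-complex}, I would use that taking invariants commutes with the formation of $\Gr^W$, $\Gr_F$, and cohomology $H^n$, precisely because $e$ is an exact projector compatible with all these operations. Concretely, $\Gr^W_k(K^\Phi)=(\Gr^W_k K)^\Phi$ and $H^n(K^\Phi)=H^n(K)^\Phi$, and similarly for the bifiltered pieces over $\CC$. Axiom~\eqref{MHC-axiom-1} (finite-dimensionality) is then immediate as a subspace of a finite-dimensional space. For axiom~\eqref{MHC-axiom-2}, strictness of $F$ on the differential of $\Gr^W_k$ is inherited by a direct summand: the invariant part is a sub-mixed-Hodge-complex cut out by a morphism of mixed Hodge structures, and strictness passes to direct summands since $\Phi$ acts by morphisms of pure Hodge structures on each $H^n(\Gr^W_k)$. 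For axiom~\eqref{MHC-axiom-3}, the pure Hodge structure of weight $k+n$ on $H^n(\Gr^W_k K)$ is $\Phi$-equivariant, so its $\Phi$-invariant subspace is again a pure Hodge structure of the same weight (the category of pure Hodge structures is abelian and $e$ is an idempotent endomorphism). That the algebra structure is inherited follows because $e$ is multiplicative in the sense that the product of two invariants is invariant. This establishes that $\MHD(\overline{Y},D)_\kk^\Phi$ is a mixed Hodge diagram of algebras.

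For the second claim, the cohomology computation, I would combine two facts. First, Navarro Aznar's diagram computes $H^\bullet(Y)$ with its mixed Hodge structure, so the $\Phi$-action on the diagram induces the natural $\Phi$-action on $H^\bullet(Y;\kk)$ by morphisms of mixed Hodge structures. Second, taking invariants commutes with cohomology, giving a canonical isomorphism of mixed Hodge structures
\begin{equation}
H^\bullet\!\big(\MHD(\overline{Y},D)_\kk^\Phi\big) \simeq H^\bullet(Y;\kk)^\Phi .
\end{equation}
It then remains to identify $H^\bullet(Y;\kk)^\Phi$ with $H^\bullet(X;\kk)$ as mixed Hodge structures. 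Since $\pi:Y\rightarrow X$ is a finite étale Galois cover with group $\Phi$, the transfer (or the classical computation of the cohomology of a quotient by a finite group in characteristic zero) gives $H^\bullet(X;\kk)\simeq H^\bullet(Y;\kk)^\Phi$. I would emphasize that this isomorphism is compatible with mixed Hodge structures because the mixed Hodge structure on $H^\bullet(X)$ is functorial and $\pi^*$ is a morphism of mixed Hodge structures whose image is exactly the $\Phi$-invariant part.

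The main obstacle, and the step deserving the most care, is ensuring that the identification $H^\bullet(X;\kk)\simeq H^\bullet(Y;\kk)^\Phi$ is an isomorphism \emph{of mixed Hodge structures} and not merely of $\kk$-vector spaces, together with the compatibility on the $\CC$-component via the logarithmic forms $\Omega^\bullet_{\overline{Y}}(\log D)$. The cleanest route is to invoke the functoriality of Navarro Aznar's construction under the morphism $\pi$ (extended to the chosen equivariant compactifications), which guarantees that $\pi^*$ is realized as a morphism of mixed Hodge diagrams, so that all comparisons are compatible with the filtrations throughout. The finiteness of $\Phi$ and the characteristic-zero hypothesis are what make averaging available and keep invariants exact; without these the argument would break down.
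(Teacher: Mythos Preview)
Your proof is correct and follows essentially the same approach as the paper's own proof, which invokes the same two ingredients: that taking invariants by a finite group in characteristic zero commutes with cohomology (and with the graded pieces of the filtrations), and that $\Phi$ acts by morphisms of mixed Hodge diagrams. The paper's proof is much terser---it simply asserts these facts and says ``it is easy to check''---whereas you have spelled out the verification of each axiom and the identification $H^\bullet(X)\simeq H^\bullet(Y)^\Phi$ as mixed Hodge structures in detail.
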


\begin{proof}
This follows essentially from the fact that taking the invariants by a finite
group commutes with cohomology. Hence the cohomology of $X$ is given by the
invariant cohomology of $Y$. Since $\Phi$ acts by morphisms of mixed Hodge
diagrams (i.e.\ preserves the structures of DG algebras and the filtrations) it
is easy to check that $\MHD(\overline{Y},D)_\kk^\Phi$ is again a mixed Hodge
diagram that computes this invariant cohomology of $Y$.
\end{proof}

Now equation~\eqref{equation:equivariant-GM-finite-image} and the related
remarks explain how to construct the controlling DG Lie algebra of
Goldman-Millson. In each component of $\MHD(\overline{Y},D)_\kk$, which is a
commutative DG algebra, one tensors by $\mathfrak{g}$ which is defined over
$\kk$. Then the group $\Phi$ acts on both the DG algebra and on $\mathfrak{g}$
and we take the invariants. We denote this by
\begin{equation}
\label{equation:non-compact-definition-L}
M := \big( \MHD(\overline{Y},D)_\kk \otimes \mathfrak{g} \big)^\Phi .
\end{equation}

\begin{lemma}
\label{lemma:non-compact-L}
This $M$ is a mixed Hodge diagram of Lie algebras that is canonically
quasi-isomorphic to the DG Lie algebra $L$ of Goldman-Millson controlling the
deformation theory of $\rho$.
\end{lemma}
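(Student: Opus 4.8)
The plan is to obtain $M$ from the commutative mixed Hodge diagram $\MHD(\overline{Y},D)_\kk$ by two operations --- tensoring with $\mathfrak{g}$ and taking $\Phi$-invariants --- and to verify that both operations preserve the structure of mixed Hodge diagram while transporting the Navarro Aznar comparison quasi-isomorphisms all the way to the Goldman--Millson algebra $L$. First I would handle the tensor product. Navarro Aznar's $\MHD(\overline{Y},D)_\kk$ is a mixed Hodge diagram of commutative algebras, so each component is a commutative DG algebra and the comparison quasi-isomorphisms are multiplicative. Equip $\mathfrak{g}$ with the trivial pure Hodge structure of weight zero, viewed as a mixed Hodge diagram concentrated in degree zero. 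For any commutative DG algebra $A$ the current-algebra construction makes $A\otimes\mathfrak{g}$ a DG Lie algebra with bracket $[a\otimes u,\ b\otimes v]:=(a\wedge b)\otimes[u,v]$ (no signs appear, since $\mathfrak{g}$ is concentrated in degree zero), matching the Goldman--Millson bracket. By Proposition~\ref{proposition:tensor-product-mixed-Hodge-complex} the tensor product with $\mathfrak{g}$ is again a mixed Hodge complex, and since both the multiplication of $A$ and the bracket of $\mathfrak{g}$ are morphisms of mixed Hodge structures, so is the induced bracket; hence $\MHD(\overline{Y},D)_\kk\otimes\mathfrak{g}$ is a mixed Hodge diagram of Lie algebras, with the comparison maps now strong morphisms of DG Lie algebras.

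Next I would pass to invariants. By functoriality of Navarro Aznar's construction $\Phi$ acts on $\MHD(\overline{Y},D)_\kk$ by automorphisms of mixed Hodge diagrams, and it acts on $\mathfrak{g}$ through $\Ad\circ\rho$; since $\mathfrak{g}$ is pure of weight zero this linear action automatically preserves its Hodge structure, so $\Phi$ acts by automorphisms of mixed Hodge diagrams of Lie algebras on $\MHD(\overline{Y},D)_\kk\otimes\mathfrak{g}$. Exactly as in the preceding lemma on $\MHD(\overline{Y},D)_\kk^\Phi$, taking invariants under a finite group in characteristic zero is exact --- it is the image of the idempotent $\tfrac{1}{|\Phi|}\sum_{g\in\Phi}g$ --- and therefore commutes with every structure present. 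This produces the mixed Hodge diagram of Lie algebras $M=(\MHD(\overline{Y},D)_\kk\otimes\mathfrak{g})^\Phi$ of~\eqref{equation:non-compact-definition-L}.

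Finally I would build the canonical quasi-isomorphism to $L$. Each component of $\MHD(\overline{Y},D)_\kk$ is joined to the $\Cinf$ de Rham complex $\mathscr{E}^\bullet(Y)$ by a canonical chain of multiplicative quasi-isomorphisms (through $\Omega^\bullet_{\overline{Y}}(\log D)$ over $\CC$ and the singular cochains over $\kk$); being natural, these chains are $\Phi$-equivariant. Tensoring with $\mathfrak{g}$ turns them into $\Phi$-equivariant quasi-isomorphisms of DG Lie algebras onto $\mathscr{E}^\bullet(Y)\otimes\mathfrak{g}$, and since invariants preserve quasi-isomorphisms the induced maps connect each component of $M$ to $(\mathscr{E}^\bullet(Y)\otimes\mathfrak{g})^\Phi$, which is precisely the Goldman--Millson algebra $L$ by~\eqref{equation:equivariant-GM-finite-image}. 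As every arrow is canonical, this yields the asserted canonical quasi-isomorphism $M\approx L$.

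The main obstacle is not an isolated computation but the demand that the DG Lie bracket, the two filtrations of the mixed Hodge diagram, and the $\Phi$-action all be respected simultaneously by the comparison maps. This hinges on the fact that Navarro Aznar's comparison morphisms are multiplicative and functorial --- which is exactly what his theorem guarantees and what turns them into morphisms of DG Lie algebras after tensoring with $\mathfrak{g}$ --- together with the exactness of finite-group invariants in characteristic zero, which is what allows the tensoring and the invariants operations to commute with passage to cohomology and with quasi-isomorphisms.
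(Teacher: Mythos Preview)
Your proposal is correct and follows essentially the same approach as the paper: both argue that tensoring with $\mathfrak{g}$ (concentrated in degree zero with trivial filtrations) and taking invariants under the finite group $\Phi$ commute with cohomology and with graded pieces of filtrations, so that $M$ remains a mixed Hodge diagram of Lie algebras, and then both trace the comparison quasi-isomorphisms of Navarro Aznar through~\eqref{equation:equivariant-GM-finite-image} to reach $L$. Your version is simply more explicit about the idempotent argument for exactness of $(-)^\Phi$ and about the multiplicativity of the comparison maps, but the underlying strategy is identical.
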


\begin{proof}
First, each component of $M$ is a DG Lie algebra. The fact that both taking
cohomology and taking the graded pieces of filtrations commute with both the
tensor product with $\mathfrak{g}$ (which is concentrated in degree zero and has
trivial filtrations) and with the invariants by $\Phi$ (which is a finite group
that acts preserving all these structures) implies easily that $M$ is a mixed
Hodge diagram of Lie algebras.

Then follow the equation~\eqref{equation:equivariant-GM-finite-image}: since
$\MHD(\overline{Y},D)_\kk$ computes equivariantly the cohomology of $Y$ then
$\MHD(\overline{Y},D)_\kk \otimes \mathfrak{g}$ computes the cohomology of $Y$
with coefficients in $\mathfrak{g}$ and $M$ computes the cohomology of $X$ with
twisted coefficients in $\Ad(P)$.
\end{proof}

\begin{remark}
\label{remark:non-compact-L-quasi-isomorphism}
We see from the proof that a quasi-isomorphism $\phi$ between two such mixed
Hodge diagrams associated with two equivariant compactifications
$\overline{Y}, \overline{Y}'$, with $\phi$ equivariant with respect to $\Phi$,
induces a quasi-isomorphism $M\stackrel{\approx}{\longrightarrow} M'$ between the corresponding mixed
Hodge diagrams of Lie algebras.
\end{remark}

The last step is to construct the augmentation $\epsilon_x$ at the level of $M$
and not $L$. Essentially this follows from the fact that the evaluation of
differential forms on $Y$ at a base point $y$ can be defined
sheaf-theoretically: it is induced by the natural morphism from the constant
sheaf $\kk_Y$ to the skyscraper sheaf $\kk_y$ which in some sense evaluates
sections at $y$. However at this point we need to enter more into the detailed
construction of Navarro Aznar.

Recall that the mixed Hodge diagrams of Navarro Aznar are obtained in two
steps. First one defines sheaves of commutative DG algebras equipped with
filtrations, forming a \emph{mixed Hodge diagram of sheaves of commutative
algebras}. Then one applies the \emph{Thom-Whitney functor} $R_{\TW}\Gamma$ which
is quasi-isomorphic to the usual derived functor of global sections $R\Gamma$
and gives here a mixed Hodge diagram of commutative algebras.

The component over $\CC$ of $\MHD(\overline{Y},D)_\kk$ is the
$R_{\TW}(\overline{Y},-)$ of a certain sheaf of analytic differential forms on
$\overline{Y}$ with logarithmic poles along $D$ called the \emph{logarithmic
Dolbeaut complex} (\cite[\S~8]{Navarro}) denoted by
$\mathscr{A}^\bullet_{\overline{Y}}(\log D)_\CC$. So it is naturally equipped with
an augmentation
\begin{equation}
\mu_y : \mathscr{A}^\bullet_{\overline{Y}}(\log D)_\CC \longrightarrow \CC_y 
\end{equation}
which by functoriality induces a morphism
\begin{equation}
R_{\TW}(\mu_y) : R_{\TW}\Gamma\big(\overline{Y},\mathscr{A}^\bullet_{\overline{Y}}(\log D)_\CC\big) \longrightarrow R_{\TW}\Gamma(\overline{Y}, \CC_y).
\end{equation}
The left-hand side is by definition the component over $\CC$ of
$\MHD(\overline{Y},D)_\kk$ and the right-hand side is simply $\CC$ itself.
Similarly, the component over $\RR$ comes from a certain sheaf
$\mathscr{A}_{\overline{Y}}^\bullet(\log D)_\RR$ of real analytic
differential forms on $\overline{Y}$ with logarithmic poles along $D$ and comes
equipped with a natural augmentation to $\RR$
\begin{equation}
R_{\TW}(\mu_y): R_{\TW}\Gamma\big(\overline{Y},\mathscr{A}_{\overline{Y}}^\bullet(\log
D)_\RR\big) \longrightarrow R_{\TW}\Gamma(\overline{Y}, \RR_y)\simeq \RR.
\end{equation}
For the component over $\kk$: let us denote by $j:Y\hookrightarrow\overline{Y}$
the compactification
then the sheaf of commutative DG algebras forming the mixed Hodge diagram of
sheaves on $\overline{Y}$ is $R_{\TW}j_* \kk_Y$ (which is quasi-isomorphic to the
usual $Rj_* \kk_Y$). It is canonically equipped with an augmentation $\mu_y$ to
$\kk_y$, which induces
\begin{equation}
R_{\TW}(\mu_y):R_{\TW}\big(\overline{Y}, R_{\TW}j_* \kk_Y \big) \longrightarrow
R_{\TW}(\overline{Y}, \kk_y) \simeq \kk .
\end{equation}
All this defines an augmentation of the mixed Hodge diagram
\begin{equation}
\mu_y : \MHD(\overline{Y}, D)_\kk \longrightarrow \kk .
\end{equation}
Then we mimic~\eqref{equation:lifted-augmentation-formula}:
we define an augmentation
\begin{equation}
\nu_x: \MHD(\overline{Y}, D)_\kk \otimes \mathfrak{g} \longrightarrow \mathfrak{g}
\end{equation}
as
\begin{equation}
\label{equation:non-compact-augmentation-sum}
\nu_x(\omega \otimes u) := \frac{1}{|\Phi|} \sum_{g\in \Phi}
(R_{\TW}(\mu_y)\otimes{\id_{\mathfrak{g}}})(g.(\omega\otimes u))
\end{equation}
where again, in each component, $g$ acts on both the mixed Hodge diagram and on $\mathfrak{g}$.
When restricted to the invariants by $\Phi$, this induces an augmentation (still
denoted by $\nu_x$)
\begin{equation}
\label{equation:non-compact-augmentation}
\nu_x: M=\big(\MHD(\overline{Y},D)_\RR \otimes \mathfrak{g}\big)^\Phi \longrightarrow \mathfrak{g} .
\end{equation}

\begin{lemma}
\label{lemma:non-compact-augmentation}
Via the canonical chain of quasi-isomorphisms relating $M$ to $L$, this $\nu_x$
corresponds to the augmentation $\epsilon_x$
of~\eqref{equation:non-compact-augmentation-epsilon}.
\end{lemma}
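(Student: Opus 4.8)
Lemma~\ref{lemma:non-compact-augmentation} asserts that the augmentation $\nu_x$ on $M = (\MHD(\overline{Y},D)_\kk \otimes \mathfrak{g})^\Phi$, built by averaging the sheaf-theoretic evaluation $\mu_y$ over the fiber $\Phi$, corresponds — via the canonical chain of quasi-isomorphisms connecting $M$ to $L = \mathscr{E}^\bullet(X,\Ad(P))$ — to the geometric augmentation $\epsilon_x$ given by evaluating degree-zero forms at $x$.

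Let me think about what this really requires.

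The two augmentations live on different (but quasi-isomorphic) models: $\epsilon_x$ is defined on the $C^\infty$ de Rham model $L$ by literal pointwise evaluation at $x$, while $\nu_x$ is defined on the Navarro Aznar model $M$ by the Thom–Whitney-processed sheaf morphism $R_{TW}(\mu_y)$, averaged over $\Phi$. The claim is a compatibility statement: the square formed by the comparison quasi-isomorphism $M \leftrightarrow L$ and the two augmentations to $\mathfrak{g}$ commutes (at least up to the appropriate homotopy/quasi-isomorphism sense in the diagram category).

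The key structural observation is that evaluation at a point is intrinsically sheaf-theoretic. Pointwise evaluation of a differential form at $y$ is exactly the map induced on global sections (or their Thom–Whitney replacement) by the sheaf morphism from the de Rham complex to the skyscraper sheaf $\kk_y$, i.e. the stalk-at-$y$ augmentation. This is the content of the sentence in the excerpt preceding the lemma. So on $Y$, before averaging and before taking invariants, the naive evaluation $\epsilon_y$ on $\mathscr{E}^\bullet(Y)\otimes\mathfrak{g}$ and the Navarro Aznar augmentation $R_{TW}(\mu_y)\otimes\id$ are two realizations of *the same* sheaf-level morphism $\mu_y$, compatible through the comparison quasi-isomorphisms between the logarithmic/Thom–Whitney model and the $C^\infty$ de Rham model.

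**The plan.** First I would work upstairs on $Y$, before taking $\Phi$-invariants, and establish the unaveraged compatibility: that $\epsilon_y$ and $R_{TW}(\mu_y)\otimes\id_{\mathfrak g}$ correspond under the chain of quasi-isomorphisms relating $\mathscr{E}^\bullet(Y)\otimes\mathfrak{g}$ to $\MHD(\overline{Y},D)_\kk\otimes\mathfrak{g}$. The crucial point here is that both are induced by the sheaf augmentation $\mu_y\colon \mathscr{A}^\bullet \to \kk_y$ (resp. its analogues in each component), which is natural: it commutes with the comparison morphisms of the diagram because the entire construction of $\MHD(\overline{Y},D)_\kk$ is functorial and the evaluation-at-$y$ map is defined sheaf-theoretically on $\overline{Y}$ via the morphism of constant sheaf to skyscraper. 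Thus the compatibility is built into the functoriality of the Thom–Whitney/$R_{TW}\Gamma$ formalism: applying $R_{TW}\Gamma(\overline{Y},-)$ to the commuting sheaf-level square yields the desired commuting square of global augmentations.

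Second, I would pass to $\Phi$-invariants and check that the averaging in formula~\eqref{equation:non-compact-augmentation-sum} reproduces exactly the averaging~\eqref{equation:lifted-augmentation-formula} that defines $\eta_x$ from $\epsilon_y$, because both are the same group-average $\tfrac{1}{|\Phi|}\sum_{g\in\Phi} g^*(\cdot)$ of the respective evaluation-at-$y$ maps. Since $g$ acts compatibly on both models (the whole chain of quasi-isomorphisms is $\Phi$-equivariant, as the resolution $\overline{Y}$ was chosen $\Phi$-equivariantly), and since taking invariants commutes with cohomology for a finite group in characteristic zero, the averaged maps $\nu_x$ and $\eta_x$ correspond under the induced quasi-isomorphism on invariants. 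Finally, restricting to invariants, $\eta_x$ restricts to $\epsilon_x$ as already observed in the text after~\eqref{equation:lifted-augmentation-formula}. Assembling these three matches gives the correspondence $\nu_x \leftrightarrow \epsilon_x$.

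**Where the difficulty lies.** The genuinely substantive step is the first one: identifying the Navarro Aznar augmentation $R_{TW}(\mu_y)$ with honest evaluation at $y$ through the comparison quasi-isomorphisms. This requires unwinding the definition of $R_{TW}\Gamma$ applied to the skyscraper $\kk_y$ (resp. $\CC_y$, $\RR_y$) and verifying that it collapses to the stalk $\kk$ and that the resulting map agrees, on the de Rham side, with literal evaluation of $0$-forms and vanishing on positive-degree forms. This is a matter of tracing the Thom–Whitney construction on a skyscraper sheaf, where it should degenerate because a skyscraper sheaf is already flasque and acyclic, so $R_{TW}\Gamma(\overline{Y},\kk_y)\simeq \kk$ canonically; the remaining content is the naturality of evaluation, which is exactly why the evaluation had to be formulated as a sheaf morphism to the skyscraper in the first place. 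The rest — the averaging and the invariants — is formal once this identification is in place.
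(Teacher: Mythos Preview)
Your proposal is correct and follows essentially the same approach as the paper: first establish on $Y$ that the sheaf-theoretic evaluation $R_{\TW}(\mu_y)$ matches the naive evaluation $\epsilon_y$ through the comparison quasi-isomorphisms, then observe that the averaging formulas for $\nu_x$ and $\eta_x$ coincide, and finally restrict to $\Phi$-invariants where $\eta_x$ becomes $\epsilon_x$. The paper adds one detail you pass over: in the intermediate step through $\Omega^\bullet_{\overline{Y}}(\log D)$, evaluation at $y$ makes sense precisely because $y\in Y$ and not on the divisor $D$, so logarithmic forms have no pole there.
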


By this we mean that the whole canonical chain of quasi-isomorphisms relating
$M$ to $L$ has augmentations to $\mathfrak{g}$, commuting with the
quasi-isomorphisms, and relating $\nu_x$ to $\epsilon_x$.

\begin{proof}
First on $Y$, $\epsilon_y$ is also
induced by the augmentation at the level of sheaves
\[ \mathscr{E}_{Y,\kk}^\bullet \longrightarrow \kk_y \]
then by taking global sections and tensoring with $\mathfrak{g}$. This
augmentation commutes with the chain of quasi-isomorphisms relating
$\mathscr{E}_{Y}^\bullet$ to $\mathscr{A}^\bullet_{\overline{Y}}(\log D)$, via
the intermediate augmentation of $\Omega_{\overline{Y}}^\bullet(\log D)$ which
is defined by the same obvious way, evaluating degree zero holomorphic forms at
$y$ (important is the fact that $y$ is in $Y$ and not on $D$). This is enough to
prove the claim on $Y$, since then it is easy to tensor all the chain of
augmented quasi-isomorphisms by~$\mathfrak{g}$.

One goes from $Y$ to $X$ by simply comparing the
formulas~\eqref{equation:non-compact-augmentation-sum}
and~\eqref{equation:lifted-augmentation-formula}, from which we see by
construction that $\nu_x$ corresponds to the augmentation that we denoted by
$\eta_x$, and then by going to the invariants under $\Phi$ we see that $\nu_x$
on $M$ corresponds to $\epsilon_x$ on $L$.
\end{proof}

Let us sum up.

\begin{theorem}
The data of $M$ and $\epsilon_x$ is an augmented mixed Hodge diagram of Lie
algebras over $\kk$ quasi-isomorphic to the augmented DG Lie algebra of
Goldman-Millson controlling the deformation theory of $\rho$. Up to
quasi-isomorphism, it depends only on $X,x,\rho$.
\end{theorem}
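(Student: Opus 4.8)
The plan is to assemble the three preceding lemmas into the single statement; the only genuinely new input is the independence from the chosen equivariant completion, which I treat last. The first assertion, that $(M,\epsilon_x)$ is an augmented mixed Hodge diagram of Lie algebras, has two ingredients. That $M$ is a mixed Hodge diagram of Lie algebras is Lemma~\ref{lemma:non-compact-L}. It remains to check that the augmentation $\nu_x$ of~\eqref{equation:non-compact-augmentation} is a morphism of mixed Hodge diagrams of Lie algebras to $\mathfrak{g}$, viewed with its trivial Hodge structure pure of weight zero. On a $\Phi$-invariant element $z$ the averaging in~\eqref{equation:non-compact-augmentation-sum} collapses to the single evaluation $(R_{\TW}(\mu_y)\otimes\id_{\mathfrak{g}})(z)$; since $\mu_y$ is an algebra augmentation, $\mu_y\otimes\id_{\mathfrak{g}}$ is a morphism of Lie algebras, and hence so is its restriction $\nu_x$ to invariants. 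For the filtrations I would argue that $\mu_y$ is nothing but the pullback along the inclusion of the point $y\in Y\subset\overline{Y}$, which by the functoriality of Navarro Aznar's construction is a morphism of mixed Hodge diagrams from $\MHD(\overline{Y},D)_\kk$ to the diagram $\kk$ of the point (here one uses that $y$ lies in $Y$, away from $D$); tensoring with $\id_{\mathfrak{g}}$ and restricting to invariants keeps the compatibility with $W$ and $F$. Finally $\nu_x$ commutes with the comparison quasi-isomorphisms of the diagram, which is exactly Lemma~\ref{lemma:non-compact-augmentation}.

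For the quasi-isomorphism with the Goldman--Millson data, I would combine Lemma~\ref{lemma:non-compact-L}, giving a canonical quasi-isomorphism $M\stackrel{\approx}{\longrightarrow}L$ of mixed Hodge diagrams of Lie algebras, with Lemma~\ref{lemma:non-compact-augmentation}, which shows this quasi-isomorphism carries $\nu_x$ to $\epsilon_x$. As $(L,\epsilon_x)$ controls the deformation theory of $\rho$ by Theorem~\ref{theorem:Goldman-Millson-main-augmented}, the augmented diagram $(M,\nu_x)$ does too, and the two are quasi-isomorphic as augmented objects.

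The remaining point, and the one needing the most care, is independence from the choices. The pointed Galois cover $(Y,y)$ and the group $\Phi$ are canonically determined by $\Ker(\rho)$, and the averaging over the fibre in~\eqref{equation:non-compact-augmentation-sum} removes the dependence on the particular point $y$ above $x$; so the only real choice is the equivariant completion $\overline{Y}$. Given two equivariant completions $\overline{Y}_1,\overline{Y}_2$, I would produce a common dominating one by taking the closure of $Y$ diagonally embedded in $\overline{Y}_1\times\overline{Y}_2$ and applying Sumihiro's theorem together with the equivariant resolution of Bierstone--Milman, exactly as in the construction of $\overline{Y}$ itself, to obtain a third equivariant completion $\overline{Y}_3$ with $\Phi$-equivariant maps $\overline{Y}_3\to\overline{Y}_i$ restricting to the identity on $Y$. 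By the functoriality of Navarro Aznar's construction these maps give $\Phi$-equivariant quasi-isomorphisms of mixed Hodge diagrams, and Remark~\ref{remark:non-compact-L-quasi-isomorphism} promotes them to quasi-isomorphisms of the associated Lie algebras $M_i$; as each map fixes a neighbourhood of $y$ the augmentations $\nu_x$ are preserved, so one obtains a zigzag of augmented mixed Hodge diagrams of Lie algebras between $(M_1,\nu_x)$ and $(M_2,\nu_x)$. The main obstacle is precisely producing this common equivariant dominating completion with augmentation-compatible maps; once it is available, the rest is the functoriality of the Navarro Aznar diagram and the invariance already recorded in the earlier lemmas.
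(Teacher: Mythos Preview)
Your proposal is correct and follows essentially the same approach as the paper: combine Lemmas~\ref{lemma:non-compact-L} and~\ref{lemma:non-compact-augmentation} for the first claim, then for independence from $\overline{Y}$ dominate two equivariant completions by a third via the diagonal closure in $\overline{Y}_1\times\overline{Y}_2$ followed by Sumihiro and Bierstone--Milman, and invoke Remark~\ref{remark:non-compact-L-quasi-isomorphism}. You add welcome detail (why $\nu_x$ respects the filtrations, via functoriality of the Navarro Aznar diagram under the inclusion of the point), though note that Lemma~\ref{lemma:non-compact-augmentation} concerns the chain of quasi-isomorphisms from $M$ to $L$, not the internal comparison morphisms of the diagram; the latter compatibility follows directly from the construction of $\nu_x$ in each component.
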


\begin{proof}
Combining Lemma~\ref{lemma:non-compact-L} and
Lemma~\ref{lemma:non-compact-augmentation} forms the first part of the claim. We
need to prove the independence of $M$ on $\overline{Y}$ up to
quasi-isomorphism. By the usual Galois correspondence for covering spaces $Y,y$
depend already only on $X,x,\rho$. And the augmentation does not depend on
$\overline{Y}$. So as in Remark~\ref{remark:non-compact-L-quasi-isomorphism} it
is enough to show that $\MHD(\overline{Y},D)_\kk$ is independent of
$\overline{Y}$ up to quasi-isomorphism. The argument is well-known
(\cite[3.2.II.C]{DeligneII}) except that we work with equivariant
compactifications. So let $\overline{Y}',\overline{Y}''$ be two equivariant
compactifications of $Y$. We look for a third compactification $\overline{Y}$
which dominates both, i.e.\ with two morphisms of pairs
\begin{equation}
\label{equation:dominant-compactification-Y}
 \xymatrix{ & (\overline{Y},D) \ar[ld]_{j'} \ar[rd]^{j''} & \\
(\overline{Y}', D') & & (\overline{Y}'', D'') . }
\end{equation}
This $\overline{Y}$ can be obtained as a resolution of singularities of the
closure of the image of the diagonal embedding of $Y$ into
$\overline{Y}' \times \overline{Y}''$. But by invoking again the combination of
the theorem of Sumihiro on equivariant completion combined with the theorem of
Bierstone-Milman on canonical resolutions of singularities, one can find such an
$\overline{Y}$ to which the action of $\Phi$ lifts. So in the
diagram~\eqref{equation:dominant-compactification-Y} the compactification
$\overline{Y}$ dominates the two others as equivariant compactifications. Then
$j'$ and $j''$ both induce quasi-isomorphisms of mixed Hodge diagrams.
\end{proof}

And our conclusion of this section.

\begin{theorem}
Let $X$ be a smooth complex quasi-projective algebraic variety. Let
$\rho:\pi_1(X,x)\rightarrow G(\kk)$ be a representation with finite image into a
linear algebraic group over the field $\kk\subset\RR$. Then there is a mixed
Hodge structure on $\Ohat_\rho$ defined over $\kk$ that is functorial in
$X,x,\rho$.
\end{theorem}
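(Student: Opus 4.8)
The plan is to run the very same machine that produced Theorem~\ref{theorem:main-compact} in the compact case, now fed with the augmented mixed Hodge diagram of Lie algebras $(M,\nu_x)$ built in this section in place of the differential forms $(L,\epsilon_x)$. Concretely, I would first verify that $(M,\nu_x)$ satisfies the hypothesis of Theorem~\ref{theorem:MHD-L-infinity}, then form the desuspended mapping cone $C=\Cone(\nu_x)[-1]$, check that $H^0(C)=0$, and finally combine the pro-representability Theorem~\ref{theorem:pro-representability-theorem} with the bar-construction Corollary~\ref{corollary:mixed-Hodge-diagram-bar-conclusion} to transport a mixed Hodge structure onto the pro-Artin algebra $R=\kk\oplus H^0(\mathscr{C}(C))^*$, which pro-Yoneda identifies with $\Ohat_\rho$.

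The one point that genuinely differs from the compact case, and which I expect to be the crux, is the verification of the hypothesis of Theorem~\ref{theorem:MHD-L-infinity}: the weight filtration $W$ on each component $M_i$ must have only non-negative weights, and $\mathfrak{g}$ must be pure of weight zero. The second is immediate: since $\rho$ has finite image the local system $\Ad(P)$ has finite monodromy, so $\mathfrak{g}$ carries only the trivial Hodge structure, pure of weight zero, and the augmentation $\nu_x$ lands there. For the first, unlike the compact case where $W$ is trivial, here $W$ is genuinely nontrivial; non-negativity is exactly the statement that the smooth variety $Y$ has non-negative weights in cohomology, realized already at the level of complexes because Navarro Aznar's weight filtration on the logarithmic complexes satisfies $W_{-1}=0$. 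Tensoring with the weight-zero $\mathfrak{g}$ and passing to $\Phi$-invariants preserves this, so each $M_i$ has $W_k M_i=0$ for $k<0$. I would also record that each $M_i$ is non-negatively graded (it computes cohomology), the remaining hypothesis needed downstream for Theorem~\ref{theorem:mixed-Hodge-diagram-bar}.

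Granting this, Theorem~\ref{theorem:MHD-L-infinity} makes $C$ a mixed Hodge diagram of $L_\infty$ algebras, and by Lemma~\ref{lemma:main-augmented-deformation-Manetti-Eyssidieux-Simpson} its deformation functor is the augmented functor $\Def_{M,\nu_x}$; Lemma~\ref{lemma:non-compact-augmentation} together with Lemma~\ref{lemma:quasi-isomorphism-deformation-functor-augmented} and the Goldman--Millson Theorem~\ref{theorem:Goldman-Millson-main-augmented} identify this with $\Def_\rho$. The vanishing $H^0(C)=0$ I would check after reducing along the quasi-isomorphism $M\stackrel{\approx}{\longrightarrow}L$ of augmented diagrams: a closed element of $C^0$ is a flat section of $\Ad(P)$ vanishing at $x$, hence identically zero (note $\mathfrak{g}^{-1}=0$ and $C^{-1}=0$ as $M$ is non-negatively graded). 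Thus Theorem~\ref{theorem:pro-representability-theorem} applies and $R=\kk\oplus H^0(\mathscr{C}(C))^*$ pro-represents $\Def_\rho$; by pro-Yoneda $R$ is canonically isomorphic, as a pro-Artin algebra, to $\Ohat_\rho$. Corollary~\ref{corollary:mixed-Hodge-diagram-bar-conclusion} then endows $R$, and hence $\Ohat_\rho$, with a mixed Hodge structure.

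Finally, functoriality in $X,x,\rho$ and the fact that the structure is defined over $\kk$ both come essentially for free: every step—the Galois cover $Y\to X$, the Navarro Aznar diagram over $\kk$, the tensor with the $\kk$-form $\mathfrak{g}$, the $\Phi$-invariants, the cone, the bar construction, and the final duality—is functorial and performed over $\kk\subset\RR$. The independence of $M$ from the chosen equivariant compactification up to quasi-isomorphism, established just above, feeds through Lemma~\ref{lemma:quasi-isomorphism-deformation-functor-augmented-infinity-MHD} and Lemma~\ref{lemma:quasi-isomorphism-MHD-L-infinity} (as already packaged in Corollary~\ref{corollary:mixed-Hodge-diagram-bar-conclusion}) to guarantee that the mixed Hodge structure obtained on $\Ohat_\rho$ is independent of the auxiliary choices.
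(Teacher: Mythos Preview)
Your proposal is correct and follows the same route as the paper: feed the augmented mixed Hodge diagram $(M,\nu_x)$ through Theorem~\ref{theorem:MHD-L-infinity}, then through Theorem~\ref{theorem:mixed-Hodge-diagram-bar} (via Corollary~\ref{corollary:mixed-Hodge-diagram-bar-conclusion}), invoking Lemmas~\ref{lemma:quasi-isomorphism-deformation-functor-augmented-infinity-MHD} and~\ref{lemma:quasi-isomorphism-MHD-L-infinity} to handle the fact that $M$ is only well-defined up to quasi-isomorphism. Your explicit check of the non-negative-weight hypothesis and of $H^0(C)=0$ are exactly what is needed.

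The one place where you are too casual is functoriality. It does \emph{not} come for free, because the Navarro Aznar diagram $\MHD(\overline{Y},D)_\kk$ depends on a chosen equivariant compactification, and a morphism $f:(X_1,x_1,\rho_1)\to(X_2,x_2,\rho_2)$ does not automatically yield a morphism of such compactifications. The paper handles this by taking the graph $\Gamma_h\subset Y_1\times Y_2\subset\overline{Y}_1\times\overline{Y}_2$ of the induced map $h:Y_1\to Y_2$, observing that its Zariski closure is $\Phi_1$-invariant (for the diagonal action through $\phi:\Phi_1\to\Phi_2$), and thereby producing a $\phi$-equivariant morphism $\overline{Y}_1\to\overline{Y}_2$ after equivariant resolution. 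Only then does one get a morphism of mixed Hodge diagrams $M_2\to M_1$ over $\mathfrak{g}$, and hence a morphism of mixed Hodge structures on the pro-representing rings. You should spell this out rather than claim it is automatic.
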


\begin{proof}
From this data we constructed in the previous theorem an augmented mixed Hodge
diagram of Lie algebras over $\kk$ controlling the deformation theory of
$\rho$. It is independent of $\overline{Y}$ up to quasi-isomorphism. So this
works exactly as in the proof of Theorem~\ref{theorem:main-compact} except that
we also have to invoke
Lemma~\ref{lemma:quasi-isomorphism-deformation-functor-augmented-infinity-MHD}
and Lemma~\ref{lemma:quasi-isomorphism-MHD-L-infinity} since the
controlling mixed Hodge diagram is defined only up to quasi-isomorphism.

We also have to check the functoriality.
A morphism
\begin{equation}
f:(X_1,x_1,\rho_1) \longrightarrow (X_2,x_2,\rho_2) ,
\end{equation}
meaning that we require $f(x_1)=x_2$ and the commutativity of
\begin{equation}
 \xymatrix{
\pi_1(X_1,x_1) \ar[rr]^{f_*} \ar[rd]_{\rho_1} & & \ar[ld]^{\rho_2} \pi_1(X_2, x_2) \\
& G(\kk), & } 
\end{equation}
induces a morphism 
\begin{equation}
\xymatrix{ \Phi_1\curvearrowright \hspace{-7ex} & Y_1 \ar[r]^h \ar[d]_{\pi_1}
& Y_2
\ar[d]^{\pi_2} & \hspace{-7ex} \curvearrowleft\Phi_2 \\
& X_1 \ar[r]_f & X_2 & } 
\end{equation}
compatibly with the base points and equivariant with respect to $\Phi_1$, that
acts on $Y_1$ and on $Y_2$ via the induced morphism of groups
$\phi:\Phi_1 \rightarrow \Phi_2$.
Then we can upgrade this to a morphism of equivariant compactifications: start
with such compactifications $\overline{Y}_1$, $\overline{Y}_2$
and consider the graph $\Gamma_h$ of $h$ seen as a subset
\begin{equation}
\Gamma_h \subset Y_1 \times Y_2 \subset \overline{Y}_1 \times \overline{Y}_2
. 
\end{equation}
By construction, $\Gamma_h$ is $\Phi_1$-invariant, where $\Phi_1$ acts
diagonally. So is its Zariski closure. This defines a morphism
$\overline{Y}_1\rightarrow \overline{Y_2}$ which is $\phi$-equivariant. Such a
morphism induces a morphism of mixed Hodge diagrams
\begin{equation}
 \MHD(\overline{Y_2}, D_{2})_\kk \longrightarrow \MHD(\overline{Y_1},
D_{1})_\kk 
\end{equation}
and then a morphism of mixed Hodge diagrams of Lie algebras
$M_2 \rightarrow M_1$ augmented over $\mathfrak{g}$. So the canonically induced
morphism $\Ohat_{\rho_1} \longrightarrow \Ohat_{\rho_2}$ is a morphism of mixed
Hodge structures.
\end{proof}

As in the compact case, one can also re-write this easily for complex mixed
Hodge structures.

\begin{proposition}
Assume that $G$ and $\rho$ are defined over $\CC$. Then $\Ohat_\rho$ has a
functorial complex mixed Hodge structure.
\end{proposition}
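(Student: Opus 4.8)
The plan is to mimic exactly the passage from the real to the complex case carried out in the compact setting (the proof of the complex version of Theorem~\ref{theorem:main-compact}): one redefines a \emph{complex mixed Hodge complex} as a DG vector space over $\CC$ equipped with filtrations $W, F, \overline{G}$ satisfying the axioms of Definition~\ref{definition:mixed-Hodge-complex} with $F$ and $\overline{G}$ playing symmetric roles, and \emph{complex mixed Hodge diagrams} of Lie algebras (resp.\ of coalgebras) accordingly. The key observation is that the entire abstract machinery of sections~\ref{section:MHD-L-infinity} and~\ref{section:bar-MHD} goes through verbatim in this setting. Indeed, Theorem~\ref{theorem:MHD-L-infinity} (the $L_\infty$ structure on the cone), Theorem~\ref{theorem:mixed-Hodge-diagram-bar} (the bar construction), and the pro-representability Theorem~\ref{theorem:pro-representability-theorem} only use the compatibility of the operations $\ell_r$ with the filtrations, the strictness of differentials on the $E_0$-page, and the purity of the $E_1$-page; none of these arguments uses the real structure, and the two filtrations $F, \overline{G}$ are treated on an equal footing throughout. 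The same remark applies to the quasi-isomorphism Lemmas~\ref{lemma:quasi-isomorphism-deformation-functor-augmented-infinity-MHD} and~\ref{lemma:quasi-isomorphism-MHD-L-infinity}.

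It remains to produce the complex geometric input. Navarro Aznar's diagram $\MHD(\overline{Y}, D)_\RR$ of section~\ref{section:geo-non-compact} is defined over $\RR$; upon complexification of its $\CC$-component it becomes a complex mixed Hodge diagram of commutative algebras, with $F$ the Hodge filtration and $\overline{G} = \overline{F}$ its complex conjugate (available thanks to the real structure). First I would tensor with $\mathfrak{g}$, now a \emph{complex} Lie algebra carrying the trivial pure complex Hodge structure of weight zero and type $(0,0)$, so that $F$ and $\overline{G}$ restrict to the trivial filtrations on the factor $\mathfrak{g}$; then take $\Phi$-invariants (the finite $\Phi$-action on $\mathfrak{g}$ preserves the type-$(0,0)$ structure automatically). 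Because $\mathfrak{g}$ is of type $(0,0)$, tensoring simply carries $F_Y$ and $\overline{F}_Y$ to all of $\mathfrak{g}$, and the result $M$ is a complex mixed Hodge diagram of Lie algebras; since $\mathfrak{g}$ has no real form in general, this is genuinely a complex (not real) mixed Hodge structure. As $\MHD(\overline{Y},D)_\RR$ has only non-negative weights and $\mathfrak{g}$ is pure of weight zero, the hypotheses of Theorem~\ref{theorem:MHD-L-infinity} are met, and the averaging formula~\eqref{equation:non-compact-augmentation-sum} defines the augmentation $\nu_x : M \to \mathfrak{g}$ exactly as before.

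With this input the conclusion follows as in the proof of Theorem~\ref{theorem:main-compact}: the desuspended mapping cone $C$ of $\nu_x$ is a complex mixed Hodge diagram of $L_\infty$ algebras with $H^0(C) = 0$ (a closed degree-zero element of the cone is a locally constant section of the coefficient system vanishing at $x$, hence zero), so the pro-representing algebra $R = \kk \oplus H^0(\mathscr{C}(C))^*$ carries a complex mixed Hodge structure and is canonically isomorphic to $\Ohat_\rho$ by the pro-Yoneda lemma. Functoriality in $X, x, \rho$ and independence of the equivariant compactification $\overline{Y}$ up to quasi-isomorphism transport unchanged, using the complex versions of Lemmas~\ref{lemma:quasi-isomorphism-deformation-functor-augmented-infinity-MHD} and~\ref{lemma:quasi-isomorphism-MHD-L-infinity}. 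The only genuinely new point, and the sole place any checking is required, is that Navarro Aznar's construction, although built over $\RR$, fits into the complex formalism; but this is immediate, since complexifying a real mixed Hodge complex and setting $\overline{G} = \overline{F}$ manifestly yields a complex mixed Hodge complex, and tensoring with the pure type-$(0,0)$ structure on $\mathfrak{g}$ preserves this. No analytic input beyond section~\ref{section:geo-non-compact} is needed.
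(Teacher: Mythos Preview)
Your proof is correct and follows essentially the same approach as the paper. The paper's own proof is extremely terse (three sentences): it simply says to keep only the part over $\CC$ of Navarro Aznar's diagram to obtain a complex mixed Hodge diagram $\MHD(\overline{Y},D)_\CC$, then form $M := \big(\MHD(\overline{Y},D)_\CC \otimes \mathfrak{g}\big)^\Phi$ with the tensor product taken over $\CC$; everything else is implicit from the earlier machinery and the complex version already spelled out in the compact case. Your version fills in exactly those implicit steps, so the two agree in substance. One small wording quibble: the phrase ``complexification of its $\CC$-component'' is awkward, since that component is already over $\CC$; what you mean (and what the paper means) is that one retains the $\CC$-component with its filtrations $W,F$ and supplies $\overline{G}=\overline{F}$ via the available real structure.
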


\begin{proof}
In this case $\mathfrak{g}$ is defined over $\CC$. In the mixed Hodge diagram of
Navarro Aznar, keeping only the part over $\CC$ defines a complex mixed Hodge
diagram $\MHD(\overline{Y},D)_\CC$.  The controlling mixed Hodge diagram of Lie
algebras is
\begin{equation}
M := \big( \MHD(\overline{Y}, D)_\CC \otimes \mathfrak{g} \big)^\Phi
\end{equation}
where the tensor product is over $\CC$.
\end{proof}

As announced, the construction of Navarro Aznar allows us to show that in the
compact case the mixed Hodge structure on $\Ohat_\rho$ is defined over $\kk$ if
$\rho$ is. Recall that in this case
$\rho$ is the monodromy of a variation of Hodge structure defined over $\kk$, so
that $V$ is a local system of finite-dimensional vector spaces over $\kk$.

\begin{proof}[{Proof of Proposition~\ref{proposition:main-compact-over-k}}]
Let $\mathcal{L}$ be the sheaf of sections of $\End(V)$. It is a local system of
finite-dimensional Lie algebras over $\kk$. Then $R_{\TW}\Gamma(X,\mathcal{L})$
is a DG Lie algebra over $\kk$ (the construction described
in \cite[\S~3]{Navarro} for commutative algebras works as well for Lie
algebras). So we use it as the part over $\kk$ of the mixed Hodge diagram of Lie
algebras $L$ of Theorem~\ref{theorem:compact-L}. The augmentation at $x$ comes
from the canonical morphism of sheaves
$\mathcal{L} \longrightarrow \mathcal{L}_x $
where the right-hand side is the sheaf supported at $x$ with stalk the Lie
algebra $\End(V_x)$, and with
$R_{\TW}\Gamma(X, \mathcal{L}_x)=\End(V_x)$.
\end{proof}

\section{Description of the mixed Hodge structure}
\label{section:MHS-desctiption}

In this final section we extract a description of the mixed Hodge structure we
constructed on $\Ohat_\rho$. We use only algebraic methods, without referring to
the geometric origin of our mixed Hodge structures. Hence we relax the notations
and write $X,G,\rho$ in all cases. We denote by $L$ a controlling mixed Hodge
diagram of Lie algebras, $\epsilon_x$ the augmentation at the base point $x$ and
$C$ the desuspended mapping cone of $\epsilon_x$.

The first observation that is straightforward is that the mixed Hodge structure
on $\Ohat_\rho$ has only non-positive weights. Namely during the construction of
$L$ and $C$ we always work with non-negative weights, and the induced weights on
$H^0(\mathscr{C}(C))$ are non-negative, but then we dualize and this produces
the non-positive weights.

The second easy observation is that we get an explicit description of the mixed
Hodge structure on the cotangent space to $\Ohat\rho$.

\begin{theorem}
The mixed Hodge structure on the cotangent space to $\Ohat_\rho$ is dual to the
mixed Hodge structure on $H^1(C)$, which fits into a short exact sequence
\begin{equation}
\label{equation:MHS-cotangent-space}
0 \longrightarrow \mathfrak{g}/\epsilon_x(H^0(L)) \longrightarrow H^1(C)
\longrightarrow H^1(L) \longrightarrow 0 .
\end{equation}
\end{theorem}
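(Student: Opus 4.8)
The plan is to treat the two assertions separately: the computation of the cotangent space, which I would deduce from the pro-representability theorem, and the short exact sequence, which will fall out of the long exact sequence of the mapping cone.

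First I would compute the cotangent space. Since $R:=\kk\oplus H^0(\mathscr{C}(C))^*$ pro-represents $\Def_C$ by Theorem~\ref{theorem:pro-representability-theorem}, its tangent space is $(\mathfrak{m}_R/\mathfrak{m}_R^2)^*=\Def_C(\kk[\epsilon]/(\epsilon^2))$, and evaluating the Maurer--Cartan description of $\Def_C$ on the dual numbers kills all brackets $\ell_r$ with $r\geq 2$ and reduces the gauge action to adding $\ell_1$-coboundaries, so this tangent space is exactly $H^1(C)$. The mixed Hodge structure, however, lives on $H^0(\mathscr{C}(C))$, so to identify it I would instead describe $\mathfrak{m}_R/\mathfrak{m}_R^2$ as the dual of the space of \emph{primitives} of the conilpotent coalgebra $H^0(\mathscr{C}(C))$. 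These primitives sit in the first step of the bar filtration, namely $H^0(\mathscr{C}_1(C))=H^0(C[1])=H^1(C)$. By construction the bar-weight filtration $\mathscr{C}W$ restricts on $\mathscr{C}_1(C)=C[1]$ to the shift $W[1]$, which is precisely the weight filtration used to put a mixed Hodge structure on $H^1(C)$; hence the Hodge structure that Corollary~\ref{corollary:mixed-Hodge-diagram-bar-conclusion} puts on the primitives coincides with that of $H^1(C)$, and dualizing shows the cotangent space is dual to $H^1(C)$ as a mixed Hodge structure.

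For the short exact sequence I would simply apply Proposition~\ref{proposition:mapping-cone-mixed-Hode-complex} to $\epsilon_x\colon L\to\mathfrak{g}$. Since $C=\Cone(\epsilon_x)[-1]$, that proposition produces a long exact sequence of mixed Hodge structures whose relevant segment is
\[ H^0(L)\xrightarrow{\ \epsilon_x\ } H^0(\mathfrak{g})\longrightarrow H^1(C)\longrightarrow H^1(L)\xrightarrow{\ \epsilon_x\ } H^1(\mathfrak{g}). \]
As $\mathfrak{g}$ is concentrated in degree zero one has $H^0(\mathfrak{g})=\mathfrak{g}$ and $H^1(\mathfrak{g})=0$, so splitting off the image of the first arrow yields the desired short exact sequence
\[ 0\longrightarrow \mathfrak{g}/\epsilon_x(H^0(L))\longrightarrow H^1(C)\longrightarrow H^1(L)\longrightarrow 0 \]
of mixed Hodge structures.

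The step I expect to demand the most care is the Hodge-theoretic part of the first paragraph: identifying the primitives of $H^0(\mathscr{C}(C))$ with $H^1(C)$ is easy as vector spaces, but one must check that the bar-weight filtration restricted to the linear part $\mathscr{C}_1(C)$ recovers exactly the shifted weight $W[1]$ that defines the mixed Hodge structure on $H^1(C)$, so that the abstract cotangent structure coming from $H^0(\mathscr{C}(C))^*$ agrees on the nose with the dual of $H^1(C)$. Once this weight bookkeeping is settled, both assertions are formal consequences of the pro-representability theorem and of the mapping-cone long exact sequence.
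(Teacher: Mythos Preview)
Your proposal is correct and follows essentially the same route as the paper: identify $\mathscr{C}_1(C)=C[1]$ so that $H^0(\mathscr{C}_1(C))=H^1(C)$ is the first term of the canonical filtration of $H^0(\mathscr{C}(C))$, hence dual to the cotangent space, and then read off the short exact sequence from the long exact sequence of the cone. The paper's proof is terser---it skips your dual-numbers tangent computation and does not spell out the weight bookkeeping you flag at the end---but the argument is the same.
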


\begin{proof}
The coalgebra $H^0(\mathscr{C}(C))$ has a canonical filtration which is dual to
the sequence of quotients of $\Ohat_\rho$ by powers of the maximal ideal. In
particular its first term $H^0(\mathscr{C}_1(C))$ is dual to the cotangent
space. But by construction $\mathscr{C}_1(C)$ is simply $C[1]$. The long exact
sequence for $C$ yields exactly the exact
sequence~\eqref{equation:MHS-cotangent-space} with $H^0(C[1])=H^1(C)$.
\end{proof}

In our situations this leads us to the following descriptions:
\begin{enumerate}
\item If $X$ is compact then $H^1(L)$ is pure of weight $1$. So $H^1(C)$ has
this as weight $1$ part and a weight $0$ part coming from
$\mathfrak{g}/\epsilon_x(H^0(L))$. We see that our mixed Hodge structure
coincides with the one of Eyssidieux-Simpson on the cotangent space.
\item If $X$ is smooth quasi-projective then $H^1(L)$ has weights only
$1,2$. Furthermore if $\rho$ has finite image then $\epsilon_x$ is surjective
and the left-hand side vanishes. So the only weights are $1,2$. Splitting the
weight filtration on the cotangent space and lifting a basis to generators of
$\Ohat_\rho$ produces generators of weight $1,2$ as in the theorem of
Kapovich-Millson.
\item As a particular case, if $H^1(L)$ is pure of weight $2$ (which happens if
the mixed Hodge structure on the $H^1$ of the finite cover corresponding to
$\rho$ is pure of weight $2$) then $\Ohat_\rho$ has only homogeneous
generators. In this sense one recovers the main theorem of \cite{Lefevre}.
\item In the general non-compact case where $\rho$ is the monodromy of a
variation of Hodge structure we expect generators of weight $0,1,2$ on
$\Ohat_\rho$.
\end{enumerate}

In order to recover completely the theorem of Kapovich-Millson, we would like to
get a presentation of $\Ohat_\rho$ as a quotient of the formal power series
algebra on $H^1(C)$ by an ideal carrying a mixed Hodge structure with weights
$2,3,4$ coming from $H^2(C)=H^2(L)$. We are unable to get this.

Now we identify the weight filtration of $\Ohat_\rho$ globally. The group $G$
acts algebraically on $\Hom(\pi_1(X,x),G)$ and the orbit of $\rho$ defines a
reduced closed subscheme $\Omega_\rho$. Formally at $\rho$, $\Omega_\rho$ is
defined by an ideal $\mathfrak{j}\subset\Ohat_\rho$. The quotient
$\Ohat_\rho/\mathfrak{j}$ is the algebra of formal function on $\Omega_\rho$ at
$\rho$.

\begin{theorem}
The weight zero part of $\Ohat_\rho$ is the formal local ring of the orbit
of~$\rho$.
\end{theorem}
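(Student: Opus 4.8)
The plan is to show that $W_{-1}\Ohat_\rho=\mathfrak{j}$, so that the quotient algebra $\Gr^W_0\Ohat_\rho=\Ohat_\rho/W_{-1}$ coincides with the formal ring of the orbit. First I record the purely formal facts: since the mixed Hodge structure on $\Ohat_\rho$ is multiplicative and carries only non-positive weights (as noted at the start of this section), $W_{-1}$ is an ideal and $\Gr^W_0\Ohat_\rho$ is a quotient algebra which is pure of weight zero. Dually, $\Gr^W_0\Ohat_\rho=\kk\oplus\big(\Gr^W_0 H^0(\mathscr{C}(C))\big)^*$, so everything reduces to computing the lowest weight part of the bar cohomology.

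To compute it I would run the bar-weight spectral sequence of $\mathscr{C}(C)$, which degenerates at $E_2$ since $\mathscr{C}(C)$ is a mixed Hodge complex. The only weight-$(-1)$ part of $C$ is the copy of $\mathfrak{g}$ produced by the cone of $\epsilon_x$ (as $\mathfrak{g}$ is pure of weight zero but gets shifted in the cone), so the $E_1$-term in bar-weight $0$ is $\bigoplus_r\Sym^r\mathfrak{g}=\Sym^+\mathfrak{g}$. The differential $d_1$ into this term has two contributions, both coming from the part of the codifferential that lowers the bar-weight by one: from $q_1$ the augmentation $\epsilon_x\colon H^0(L)=\mathfrak{g}^\rho\to\mathfrak{g}$ enters as \emph{multiplication} $\mathfrak{g}^\rho\otimes\Sym^{m}\mathfrak{g}\to\Sym^{m+1}\mathfrak{g}$, while from $q_2$ (the naive bracket) the bracket enters as the \emph{adjoint derivation} $\mathfrak{g}^\rho\otimes\Sym^{m}\mathfrak{g}\to\Sym^{m}\mathfrak{g}$. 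Dualizing, $\Gr^W_0\Ohat_\rho$ is the kernel of $d_1^*$ inside $\widehat{\Sym}(\mathfrak{g}^*)=\Ohat_{\mathfrak{g},0}$, that is, the power series annihilated by the vector fields $\xi\mapsto\epsilon_x(\xi)+[\xi,-]$ for $\xi\in\mathfrak{g}^\rho$.

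These vector fields generate exactly the infinitesimal affine action on $\mathfrak{g}$ of the stabilizer $Z$ of $\rho$, whose Lie algebra is $\mathfrak{g}^\rho=H^0(L)$; their common kernel is the ring of functions constant along the $\mathfrak{g}^\rho$-orbits. Since this action is free near the origin (its orbit direction there is $\epsilon_x(\mathfrak{g}^\rho)=\mathfrak{g}^\rho$, and $\epsilon_x$ is injective on $H^0(L)$), the local quotient is the transverse slice $\mathfrak{g}/\mathfrak{g}^\rho$, giving $\Gr^W_0\Ohat_\rho\cong\widehat{\Sym}\big((\mathfrak{g}/\mathfrak{g}^\rho)^*\big)$. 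More conceptually, this identifies the weight-zero graded piece with the pro-representing algebra of the Fiorenza--Manetti cone of the inclusion $\mathfrak{g}^\rho\hookrightarrow\mathfrak{g}$, i.e.\ with the controlling object of the homogeneous space $G/Z$; in particular it is formally smooth of dimension $\dim(\mathfrak{g}/\mathfrak{g}^\rho)$.

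Finally I would match this quotient with the orbit. The conjugation map $G\to\Hom(\pi_1(X,x),G)$, $g\mapsto g\rho g^{-1}$, has image $\Omega_\rho$ and induces a surjection $\Ohat_\rho\twoheadrightarrow\Ohat_{\Omega_\rho}$ with kernel $\mathfrak{j}$; its differential is the weight-zero inclusion $\mathfrak{g}/\mathfrak{g}^\rho\hookrightarrow H^1(C)$ of the cotangent-space sequence of the previous theorem. As $\Ohat_{\Omega_\rho}\cong\widehat{\Sym}((\mathfrak{g}/\mathfrak{g}^\rho)^*)$ is pure of weight zero (orbits being smooth, with weight-zero tangent space since $\mathfrak{g}$ is), the surjection kills $W_{-1}$, whence $W_{-1}\subseteq\mathfrak{j}$; and the induced map $\Gr^W_0\Ohat_\rho\to\Ohat_{\Omega_\rho}$ is a surjection between two formally smooth algebras inducing an isomorphism on cotangent spaces, hence an isomorphism. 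Therefore $W_{-1}=\mathfrak{j}$ and $\Gr^W_0\Ohat_\rho=\Ohat_{\Omega_\rho}$. The main obstacle is the middle step: identifying the weight-zero bar cohomology precisely, not merely its cotangent space, which hinges on recognizing that $\epsilon_x$ and the bracket assemble in $d_1$ into the affine $\mathfrak{g}^\rho$-action whose invariants are the orbit functions. The cotangent-level statement alone, already contained in the preceding theorem, does not suffice to pin down the entire ideal $\mathfrak{j}$.
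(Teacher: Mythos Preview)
Your strategy is sound and reaches the right target, but it takes a different route from the paper and has a genuine gap in the spectral-sequence step.

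The paper does not compute $\Gr^W_0 H^0(\mathscr{C}(C))$ via the bar-weight spectral sequence. Instead it introduces the auxiliary $L_\infty$ algebra $D=\Cone\big(H^0(L)\xrightarrow{\epsilon_x}\mathfrak{g}\big)[-1]$ (this is exactly what you call, in your ``more conceptual'' remark, the Fiorenza--Manetti cone of $\mathfrak{g}^\rho\hookrightarrow\mathfrak{g}$). There is a natural morphism of mixed Hodge diagrams of $L_\infty$ algebras $D\to C$, inducing on pro-representing algebras a map
\[
p:\kk\oplus H^0(\mathscr{C}(C))^*\longrightarrow \kk\oplus H^0(\mathscr{C}(D))^*,
\]
and the paper identifies the target with $\Ohat_{\Omega_\rho}$ by further replacing $D$ with the quasi-isomorphic abelian $E=\mathfrak{g}/\epsilon_x(H^0(L))$ in degree~$1$ and invoking \cite[\S~2.2.2]{EyssidieuxSimpson}. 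Since $p$ is a morphism of mixed Hodge structures with pure weight-zero target, strictness of $W$ gives $\Gr^W_0(\Ohat_\rho)=\Ohat_{\Omega_\rho}$ immediately. This bypasses any explicit $d_1$ computation.

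Your route has a concrete gap: the claim that $d_1$ into $E_1^{0,0}$ has \emph{only} the two contributions from $q_1$ and $q_2$ is false. For the Fiorenza--Manetti cone, the higher components $q_{r+1}$ ($r\geq 2$) are iterated brackets $[u_1,[u_2,\dots,[u_r,\epsilon(x)]\dots]]$ with $u_i\in\mathfrak{g}$ and $x\in L^0$; each $u_i$ sits in $\Gr^{W[1]}_0 C[1]$ and $x$ in $\Gr^{W[1]}_1 C[1]$, so $q_{r+1}$ lowers the bar-weight by exactly one and therefore also contributes to $d_1$. Your ``affine vector fields'' $\xi\mapsto\epsilon_x(\xi)+[\xi,-]$ are thus only the beginning of an expansion; the full $d_1$ packages the entire $L_\infty$ structure of $D$, which is precisely why the clean way to compute its cohomology is via the quasi-isomorphism $D\approx E$ rather than term by term. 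Your conceptual remark already points to this fix; you should promote it from a remark to the actual argument.

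There is a second, subtler gap in your final matching step. You use that the surjection $\Ohat_\rho\twoheadrightarrow\Ohat_{\Omega_\rho}$ kills $W_{-1}$, but this requires knowing that it is a morphism of mixed Hodge structures, which is not automatic from the geometric definition of the orbit. In the paper this is exactly what the construction of $D\to C$ as a morphism of mixed Hodge diagrams provides: the map $p$ above is a morphism of mixed Hodge structures by construction, and is then identified with the geometric restriction via pro-representability. Without that, your inclusion $W_{-1}\subseteq\mathfrak{j}$ is unjustified.
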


At the level of the cotangent space, we see this from the previous theorem.

\begin{proof}
We will write an explicit morphism from $\Ohat_\rho$ to the formal local ring of
$\Omega_\rho$ using our methods of cones, functor $\mathscr{C}$ and abstract
pro-representability theorems. The kernel of this morphism will be the ideal
$\mathfrak{j}$ and the compatibility of this construction with mixed Hodge
structures will be clear.

Let us consider the desuspended mapping cone $D$ of
$ \epsilon_x : H^0(L) \rightarrow \mathfrak{g} . $
This has $D^0=H^0(L)$, $D^1=\mathfrak{g}$, with the differential being given by
$\epsilon_x$. Its $L_\infty$ algebra structure is in fact DG Lie since the only
non-zero Lie brackets are the one induced on $D^0$ and the naive bracket between
$D^0$ and $D^1$. The deformation functor is given by ($A\in \Art_\kk$)
\begin{equation}
\Def_D(A) = \exp(\mathfrak{g}\otimes\mathfrak{m}_A) / \exp(H^0(L)\otimes\mathfrak{m}_A).
\end{equation}

Let us consider also the DG Lie algebra $E$ which has only
$E^1=\mathfrak{g}/\epsilon_x(H^0(L))$ with zero bracket and differential. One
can see $E$ as the desuspended mapping cone of
$0\rightarrow \mathfrak{g}/\epsilon_x(H^0(L))$, the $L_\infty$ algebra structure
being trivial in this case. The deformation functor is given by
\begin{equation}
\Def_E(A) = (\mathfrak{g}/\epsilon_x(H^0(L)))\otimes\mathfrak{m}_A .
\end{equation}

There is a canonical quasi-isomorphism $D\stackrel{\approx}{\longrightarrow} E$.
One can see this as being induced by a morphism between the morphisms whose
$D,E$ are the cones, hence by the functoriality of the $L_\infty$ (in fact DG
Lie) structure on the mapping cone this is morphism of DG Lie algebras, and it
induces an isomorphism of the deformation functors. Hence there is a canonical
isomorphism
\begin{equation}
H^0(\mathscr{C}(D))^*=H^0(\mathscr{C}(E))^* .
\end{equation}
But the computation of $H^0(\mathscr{C}(E))^*$ is straightforward: it is (up to
adding $\kk$) the
algebra of power series on the vector space
$\mathfrak{g}/\epsilon_x(H^0(L))$.

Now we follow \cite[\S~2.2.2]{EyssidieuxSimpson} where it is explained how this
corresponds to the algebra of formal functions on the orbit of $\rho$. Observe
that, following their notations, our $D$ corresponds to their deformation
functor $h'_1$ and $E$ to $h_1$. Then there is a natural morphism
$D\rightarrow C$ which corresponds to the morphism of deformation functors
$h'_1 \rightarrow \Def_\rho$ (the inclusion of the orbit inside the
representation variety). It induces a morphism
\begin{equation}
\label{equation:description-MHS-orbit-projection}
p:\kk \oplus H^0(\mathscr{C}(C))^* \longrightarrow \kk\oplus H^0(\mathscr{C}(D))^* .
\end{equation}
Via their pro-representability property, the left-hand side is identified with
$\Ohat_\rho$ and the right-hand side is identified with the algebra of formal
functions of $\Omega_\rho$. So the kernel of $p$ is the ideal $\mathfrak{j}$
defining the orbit inside $\Ohat_\rho$.

If we replace in the right-hand side
of~\eqref{equation:description-MHS-orbit-projection} $E$ by $D$ we
see clearly that the mixed Hodge structure is pure of weight zero.  By the
strict compatibility of the weight filtration with respect to morphisms of mixed
Hodge structures we get that $\Gr^W_0(\Ohat_\rho)$ equals this right-hand side.
\end{proof}

\bibliographystyle{amsalpha}
\bibliography{Bibliographie}

\providecommand{\bysame}{\leavevmode\hbox to3em{\hrulefill}\thinspace}
\providecommand{\MR}{\relax\ifhmode\unskip\space\fi MR }
\providecommand{\MRhref}[2]{%
  \href{http://www.ams.org/mathscinet-getitem?mr=#1}{#2}
}
\providecommand{\href}[2]{#2}
\begin{thebibliography}{{Man}04}

\bibitem[BM97]{BierstoneMilman}
E.~{Bierstone} and P.~D. {Milman}, \emph{{Canonical desingularization in
  characteristic zero by blowing up the maximum strata of a local invariant}},
  {Inventiones Mathematicae} \textbf{128} (1997), no.~2, 207--302.

\bibitem[CG16]{CiriciGuillenComplexes}
J.~{Cirici} and F.~{Guill\'en}, \emph{{Homotopy theory of mixed Hodge
  complexes}}, {Tohoku Mathematical Journal} \textbf{68} (2016), no.~3,
  349--375.

\bibitem[{Cir}15]{Cirici}
J.~{Cirici}, \emph{{Cofibrant models of diagrams: mixed Hodge structures in
  rational homotopy}}, {Transactions of the American Mathematical Society}
  \textbf{367} (2015), no.~8, 5935--5970.

\bibitem[{Del}71]{DeligneII}
P.~{Deligne}, \emph{{Th\'eorie de Hodge,~II}}, {Publications Math\'ematiques de
  l'IHES} \textbf{40} (1971), 5--57.

\bibitem[{Del}74]{DeligneIII}
\bysame, \emph{{Th\'eorie de Hodge,~III}}, {Publications Math\'ematiques de
  l'IHES} \textbf{44} (1974), 5--77.

\bibitem[ES11]{EyssidieuxSimpson}
P.~{Eyssidieux} and C.~{Simpson}, \emph{{Variations of mixed Hodge structure
  attached to the deformation theory of a complex variation of Hodge
  structures}}, {Journal of the European Mathematical Society} \textbf{13}
  (2011), no.~6, 1769--1798.

\bibitem[FM07]{FiorenzaManetti}
D.~{Fiorenza} and M.~{Manetti}, \emph{{$L_\infty$ structures on mapping
  cones}}, {Algebra \& Number Theory} \textbf{1} (2007), no.~3, 301--330.

\bibitem[GM88]{GoldmanMillson}
W.~M. {Goldman} and J.~J. {Millson}, \emph{{The deformation theory of
  representations of fundamental groups of compact K\"ahler manifolds}},
  {Publications Math\'ematiques de l'IHES} \textbf{67} (1988), 43--96.

\bibitem[GM90]{GoldmanMillsonKuranishi}
\bysame, \emph{{The homotopy invariance of the Kuranishi space}}, {Illinois
  Journal of Mathematics} \textbf{34} (1990), no.~2, 337--367.

\bibitem[{Hai}87]{Hain}
R.~M. {Hain}, \emph{{The de Rham homotopy theory of complex algebraic varieties
  I}}, {$K$-Theory} \textbf{1} (1987), no.~3, 271--324.

\bibitem[{Hin}01]{Hinich}
V.~{Hinich}, \emph{{DG coalgebras as formal stacks}}, {Journal of Pure and
  Applied Algebra} \textbf{162} (2001), no.~2-3, 209--250.

\bibitem[KM98]{KapovichMillson}
M.~{Kapovich} and J.~J. {Millson}, \emph{{On representation varieties of Artin
  groups, projective arrangements and the fundamental groups of smooth complex
  algebraic varieties}}, {Publications Math\'ematiques de l'IHES} \textbf{88}
  (1998), 5--95.

\bibitem[{Kon}94]{Kontsevich}
M.~{Kontsevich}, \emph{{Topics in algebra, deformation theory}}, {Lectures
  notes}, 1994.

\bibitem[{Lef}17]{Lefevre}
L.-C. {Lef\`evre}, \emph{{A criterion for quadraticity of a representation of
  the fundamental group of an algebraic variety}}, {Manuscripta Mathematica}
  \textbf{152} (2017), no.~3-4, 381--397.

\bibitem[LM85]{LubotzkyMagid}
A.~{Lubotzky} and A.~R. {Magid}, \emph{{Varieties of representations of
  finitely generated groups}}, {Memoirs of the American Mathematical Society}
  \textbf{336} (1985).

\bibitem[LV12]{LodayValette}
J.-L. {Loday} and B.~{Vallette}, \emph{{Algebraic operads}}, {Grundlehren der
  mathematischen Wissenschaften}, no. 346, {Springer}, 2012.

\bibitem[{Man}99]{Manetti}
M.~{Manetti}, \emph{{Deformation theory via differential graded Lie algebras}},
  {Seminari di Geometria Algebrica 1998-1999, Scuola Normale Superiore} (1999).

\bibitem[{Man}04]{ManettiLectures}
\bysame, \emph{{Lectures on deformations of complex manifolds. Deformations
  from differential graded viewpoint}}, {Rendiconti di Matematica} \textbf{24}
  (2004), no.~1, 1--183.

\bibitem[{Mor}78]{Morgan}
J.~W. {Morgan}, \emph{{The algebraic topology of smooth algebraic varieties}},
  {Publications Math\'ematiques de l'IHES} \textbf{48} (1978), 137--204.

\bibitem[{Nav}87]{Navarro}
V.~{Navarro Aznar}, \emph{{Sur la th\'eorie de Hodge-Deligne}}, {Inventiones
  Mathematicae} \textbf{90} (1987), 11--76.

\bibitem[PS08]{PetersSteenbrink}
C.~A.~M. {Peters} and J.~H.~M. {Steenbrink}, \emph{{Mixed Hodge structures}},
  {Ergebnisse der Mathematik und ihrer Grenzgebiete}, no.~52, {Springer}, 2008.

\bibitem[{Sch}68]{Schlessinger}
M.~{Schlessinger}, \emph{{Functors of Artin rings}}, {Transactions of the
  American Mathematical Society} \textbf{130} (1968), 208--222.

\bibitem[{Sum}74]{Sumihiro}
H.~{Sumihiro}, \emph{{Equivariant completion}}, {Journal of Mathematics of
  Kyoto University} \textbf{14} (1974), 1--28.

\bibitem[To{\"{e}}17]{Toen}
B.~To{\"{e}}n, \emph{{Probl\`emes de modules formels. Expos\'e 1111}},
  {Ast\'erisque. Séminaire Bourbaki, volume 2015/2016} \textbf{390} (2017),
  199--244.

\bibitem[{Zuc}79]{Zucker}
S.~{Zucker}, \emph{{Hodge theory with degenerating coefficients: $L_2$
  cohomology in the Poincar\'e metric}}, {Annals of Mathematics} \textbf{109}
  (1979), 415--476.

\end{thebibliography}

\end{document}